\def\elystyle{0}
\newcommand{\pic}[2]{\includegraphics[width=#1mm]{#2}}
\renewcommand{\phi}{\varphi}
 \newcommand{\Z}{\mathbb{Z}}
 \newcommand{\R}{\mathbb{R}}
 \newcommand{\C}{\mathbb{C}}
  \renewcommand{\P}{\mathbb{P}}
  \newcommand{\calF}{\mathcal{F}}
  \newcommand{\calG}{\mathcal{G}}
  \newcommand{\calD}{\mathcal{D}}
  \newcommand{\calL}{\mathcal{L}}
  \newcommand{\calN}{\mathcal{N}}
    \newcommand{\calX}{\mathcal{X}}
  \newcommand{\ul}{\underline}
  \newcommand{\p}{\partial}
\DeclareMathOperator{\id}{id}
\renewcommand{\div}{\operatorname{div}}
\renewcommand{\grad}{\operatorname{grad}}
  \newtheorem{thm}{Theorem}[section]
  \newtheorem{lem}[thm]{Lemma}
    \newtheorem{defin}[thm]{Definition}
\newcommand{\w}{\omega}
\newcommand{\fr}{\frac{\sqrt{7}}{2}}
\renewcommand{\P}{\ul{p}}
\newcommand{\OOmega}{\ul{\omega}}
\newcommand{\rt}{\tilde{\rho}}
\newcommand{\ut}{\tilde{u}}
\begin{document}
\title[Hunter Self-similar Implosion profiles]{Hunter self-similar implosion profiles for the gravitational Euler-Poisson system}
\author{Ely Sandine}
 \thanks{This material is based upon work supported by the National Science Foundation Graduate Research Fellowship Program under Grant No. DGE 2146752. Any opinions, findings, and conclusions or recommendations expressed in this material are those of the author(s) and do not necessarily reflect the views of the National Science Foundation.}
 \address{Ely Sandine, Department of Mathematics, University of California, Berkeley, 94720.}
 \email{ely\_sandine@berkeley.edu}
\date{\today}
\maketitle

\begin{abstract}
Our result is a construction of infinitely many radial self-similar implosion profiles for the gravitational Euler-Poisson system. The problem can be expressed as solving a system of non-autonomous non-linear ODEs. The first rigorous existence result for a non-trivial solution to these ODEs is due to Guo, Had\v{z}i\'c and Jang \cite{ghj}, in which they construct a solution found numerically by Larson and Penston independently \cite{larson,penston}. The solutions we construct belong to a different regime and correspond to a strict subset of the family of profiles discovered numerically by Hunter \cite{hunter}. Our proof adapts a technique developed by Collot, Rapha\"el and Szeftel in \cite{crs}, in which they study blowup for a family of energy-supercritical focusing semilinear heat equations. In our case, the quasilinearity presents complications, most severely near the sonic point where the system degenerates. 
\end{abstract}

\tableofcontents

\section{Introduction}
We begin by describing the setup of the problem. We consider physical space $\R_x^3$, a gas with density $\rho>0$ and its velocity vector field $u$. The (non-relativistic) gravitational force per unit mass generated by the gas is $-2\grad(\Delta^{-1}\rho)$, units having been chosen so that the gravitational constant is $(2\pi)^{-1}$.  We assume that the gas is ideal and isothermal, and choose units so that the ratio of its pressure to its density is $1$. If we model the evolution of this gas by coupling the compressible Euler equations with self-gravitation, we get the following system of equations for $\rho\colon I_t\times \R_{x}^3\to \R_{>0}$ and $u\colon I_t \times \R_x^3\to \R^3$, which we refer to as the Euler-Poisson system,
\begin{equation}
\begin{cases}
\partial_t \rho+\div(\rho u)=0\\
\partial_t u+u\cdot \grad{u}+\frac{\grad \rho}{\rho}+2\grad(\Delta^{-1} \rho)=0
\end{cases}.
\label{eulerpoissonsystem}
\end{equation}
We will focus on solutions with the property that $\rho(0)\to \infty$ in finite time. We will make the highly simplifying assumption that $\rho$ and $u$ are both radially symmetric. For notational convenience we define
\[
\div_r f(r)=(\partial_r +\frac{2}{r})f\qquad \text{and}\qquad \div_r^{-1}f(r)=\frac{1}{r^2}\int_0^{r}(r')^2 f(r')dr',
\]
so that under radial symmetry \eqref{eulerpoissonsystem} becomes the following system of two equations on $I_t\times \R_{r}$:
\[
\begin{cases}
\partial_t \rho+\div_r(\rho u)=0\\
\partial_t u+u \partial_r u +\frac{\partial_r \rho}{\rho}+2\div_r^{-1}(\rho)=0
\end{cases}.
\]
By time-translation symmetry we may assume the blowup time occurs at $t=0$. We then note that we have the following dynamical scaling symmetry:
\begin{equation}
(\rho(t,r),u(t,r))\text{\quad solves \eqref{eulerpoissonsystem}}\iff \left(\frac{1}{\lambda^2} \rho(\frac{t}{\lambda},\frac{r}{\lambda}),u(\frac{t}{\lambda},\frac{r}{\lambda})\right)\text{\quad solves \eqref{eulerpoissonsystem}}.
\label{rescaling}
\end{equation}
A natural first step in understanding singularity formation is to understand the self-similar solutions, namely those which are fixed by this symmetry (any such non-zero solution blows up at $t=0$). 

We now summarize the rest of the introduction. In Subsection \ref{equationsder}, we will derive the main system of ODEs, state the theorem and discuss related works. We then compare the system to a class of semilinear focusing heat equations in Subsection \ref{nlhsubsect}, highlighting connections that are important for the proving the theorem. In Subsection \ref{outlinesec} we give an outline of the proof and describe the organization of the rest of the paper.

\subsection{Derivation of equations and statement of the theorem}
\label{equationsder}
As a first comment, we shall only study classical solutions of \eqref{eulerpoissonsystem}, by which we mean that $\rho$ and $u$ are $C^1$ and $\rho$ has sufficient decay for the gravitational force to be well-defined. As a comment, we do not require that either the total mass $\int_{\R_x^3}\rho dx$ or the total energy,
\[
E(\rho,u)=\int_{\R^3} \frac{1}{2}\rho \abs{u}^2+\rho\log(\rho)-\abs{\grad \Delta^{-1}(\rho)}^2 dx,
\]
are finite. In fact, the solutions we discuss will have infinite mass and energy.

We will study the system through the technique of similarity variables, by which we mean switching to the variables $(\tilde{\rho},\tilde{u},s,y)$ defined by
\[
\rho(t,r)=\frac{1}{t^2}\tilde{\rho}(\frac{r}{-t}),\qquad u(t,r)=\tilde{u}(\frac{r}{-t})\qquad s=-\log(-t),\qquad y=\frac{r}{-t}.
\]
Performing this change of variables leads to the following system:
\begin{equation}
\begin{cases}
\partial_s \tilde{\rho}+y\partial_y \tilde{\rho}+2\tilde{\rho}+\div_y(\tilde{\rho}\tilde{u})=0\\
\partial_s \tilde{u}+y\partial_y\tilde{u}+\tilde{u}\partial_y \tilde{u}+\frac{\partial_y \tilde{\rho}}{\tilde{\rho}}+2\div_y^{-1}(\tilde{\rho})=0
\end{cases}.
\label{fullselfsimeq}
\end{equation}
We see that $(\rho,u)$ is a solution to \eqref{eulerpoissonsystem} on $(-\infty,0)\times \R_x^3$ which is both radial and invariant under the scaling symmetry \eqref{rescaling} if and only if $(\tilde{\rho},\tilde{u}):=(\rho(-1,y),u(-1,y))$ is an $s$-independent solution to \eqref{fullselfsimeq}. In this case, the equations for $(\tilde{\rho}(y),\tilde{u}(y))$ become (from now on $\div=\div_y$)
\begin{equation}
\begin{cases}
y\partial_y\tilde{\rho}+2\tilde{\rho}+\div(\tilde{\rho}\tilde{u})=0\\
y\partial_y \tilde{u}+\tilde{u}\partial_{y}\tilde{u}+\frac{\partial_y \tilde{\rho}}{\tilde{\rho}}+2\div^{-1}(\tilde{\rho})=0
\end{cases}.
\label{selfsimeq}
\end{equation}

We comment that the use of self-similar implosion solutions to study compressible fluid flow has an extensive history. Existence of self-similar solutions to the compressible Euler equations (without gravity) was shown by Guderley \cite{guderley} (see also Sedov \cite{sedov}). Recently, Merle-Raphaël-Rodnianski-Szeftel \cite{mrrs1} constructed smooth self-similar solutions, and used these profiles to rigorously describe implosion for the defocusing non-linear Schrödinger,  compressible Euler and compressible Navier-Stokes equations \cite{mrrs3,mrrs2}. For further developments, we refer readers to the recent works of Buckmaster-Cao-Labora-Gómez-Serrano \cite{buckmastercaolaboragomezserrano}, Jenssen-Tsikkou \cite{jenssentsikkou} and Cao-Labora-Gómez-Serrano-Shi-Staffilani \cite{caolaboragomezserranoshistaffilani}, and their discussions. In the Euler-Poisson system, the presence of gravity removes a degree of scaling freedom, leading to a different type of ODE problem than the one studied by these authors.

To understand the structure of the system \eqref{selfsimeq} we eliminate the non-local term by rewriting the first equation as
\[
\tilde{\rho}=\div(y\tilde{\rho}+\tilde{u}\tilde{\rho})
\]
and substitute this into the second equation. This leads to the following formulation of the \emph{self-similar Euler-Poisson system} as a system of quasilinear, non-autonomous ODEs:
\begin{equation}
\mqty(\tilde{u}+y & \tilde{\rho}\\\frac{1}{\tilde{\rho}} & \tilde{u}+y)\mqty(\tilde{\rho}'\\ \tilde{u}')+\mqty(\frac{2\tilde{\rho}(\tilde{u}+y)}{y}\\ 2\tilde{\rho}(\tilde{u}+y))=0.
\label{selfsimeq2}
\end{equation}

We now discuss some essential features of the system (we adopt the terminology and discussion of Guo-Hadzic-Jang \cite{ghj}, now formulated using $\tilde{u}$ instead of $\tilde{\omega}$). Firstly, the system is singular at $y=0$, where the equations impose the boundary conditions
\begin{equation}
\tilde{u}(0)=0\qquad \text{and} \qquad \tilde{u}'(0)=-\frac{2}{3}.
\label{bdryconds}
\end{equation}
Secondly, if for some point $y_*$ we have $(\tilde{u}(y_*)+y_*)^2=1$ then the system is singular (the coefficient matrix has determinant zero). Such a point $y_*$ is refered to as a \emph{sonic point}. 

We note that if $\tilde{u}(0)=0$ and $\tilde{u}$ is bounded, then by continuity there exists at least one sonic point $y_*$ for which $u(y_*)+y_*=1$. At such a point, the equations specify that the solution must satisfy either
\[
\tilde{\rho}(y_*)=\frac{1}{y_*},\qquad \tilde{\rho}'(y_*)=-\frac{1}{y_*^2},\qquad \tilde{u}(y_*)=1-y_*,\qquad \tilde{u}'(y_*)=-\frac{1}{y_*}\qquad \text{or}
\]
\begin{equation}
\tilde{\rho}(y_*)=\frac{1}{y_*},\qquad \tilde{\rho}'(y_*)=\frac{1}{y_*}-\frac{3}{y_*^2},\qquad \tilde{u}(y_*)=1-y_*,\qquad \tilde{u}'(y_*)=\frac{1}{y_*}-1.
\label{hunterexpans}
\end{equation}
We use the convention that the first type of first order Taylor expansion is called a Larson-Penston expansion and the second type is called a Hunter expansion.

Two explicit solutions to \eqref{selfsimeq2} are the Friedman solution $(\tilde{\rho}_F,\tilde{u}_F)=(\frac{1}{3},-\frac{2}{3}y)$ and the far-field solution $(\tilde{\rho}_f,\tilde{u}_f)=(y^{-2},0)$. Neither are classical solutions as for the former the gravitational field is not well-defined and for the latter there is a singularity at $y=0$.

The historically first non-trivial solution to \eqref{selfsimeq2} to receive attention is called the Larson-Penston solution, which we now label $(\tilde{\rho}_0,\tilde{u}_0)$, named for the authors who discovered it independently through numerical integration \cite{larson,penston}. The solution is analytic and satisfies the qualitative properties $\tilde{\rho}_0\lesssim \ev{y}^{-2}$, $\tilde{u}_0\lesssim 1$ and $\tilde{u}_0>0$. Moreover, the solution has a unique sonic point $y_*\in (2,3)$ where it has a Larson-Penston expansion. This solution was rigorously proved to exist by Guo-Had\v{z}i\'c-Jang \cite{ghj}. Further solutions were found numerically by Hunter \cite{hunter}, which we label as $\{(\tilde{\rho}_k,\tilde{u}_{k})\}_{k=1}^{\infty}$. These are still analytic and obey $\tilde{\rho}_N\lesssim_N \ev{y}^{-2}$ and $\tilde{u}_N\lesssim_N 1$, but the velocity is no longer positive. These solutions each have a unique sonic point $y_*\in (0,2)$ where $\tilde{\rho}_k$ has a Hunter-type expansion. From the plots of the Larson-Penston and Hunter solutions (see Figure 4 of Maeda-Harada \cite{maedaharada} for instance) we infer that for $k\geq 0$, $\tilde{\rho}_k$ intersects the far-field density $y^{-2}$ precisely $k+1$ times on $(0,\infty)$.
 
We also comment that Shu \cite{shu} criticized the Larson-Penston solution on physical grounds and proposed instead a certain ``expanding wave'' self-similar solution. Further, non-smooth, self-similar solutions were introduced by Summers-Whitworth \cite{whitworthsummers} and were later argued to be unstable by Ori-Piran \cite{op4}. We focus on the Larson-Penston and Hunter solutions for simplicity and also because their stability has been investigated numerically through both linear mode analyses and simulations of the full nonlinear equation. Some references for computations of unstable eigenvalues are Hanawa-Nakayama \cite{hanawanakayama} and Maeda-Harada \cite{maedaharada}. The numerics seem to suggest that for $k\geq 0$ the operator obtained by formally linearizing \eqref{fullselfsimeq} about $(\tilde{\rho}_k,\tilde{u}_k)$ has $k+1$ unstable modes, with the least unstable mode coming from time-translation symmetry. This suggests that the Larson-Penston and Hunter solutions fit into a sort of ``non-linear spectrum'' with the Larson-Penston solution playing the role of the ground state. We also mention that there have also been numerical simulations of the radially-symmetric Euler-Poisson system indicating a self-similar collapse regime. Some sources include Hunter \cite{hunter}, Foster-Chevalier \cite{fosterchevalier} and Brenner-Witelski \cite{brennerwitelski}.

The goal of these notes is to prove the following result which are consistent with an extrapolation of the numerical works on the Hunter solutions.
\begin{thm} For some integer $N\gg 1$ there exists real analytic solutions $\{(\tilde{\rho}_k,\tilde{u}_k)\}_{k=N}^{\infty}$ to \eqref{selfsimeq2} on $[0,\infty)$ subject to the boundary conditions \eqref{bdryconds} and the requirement $\tilde{\rho}_k>0$. In addition, $\tilde{\rho}_k$ intersects $y^{-2}$ exactly $k+1$ times. For these solutions, the sonic point condition $(\tilde{u}_k(y)+y)^2=1$ is true at exactly one point $y=y_*\in (0,2)$, where $\tilde{\rho}$ admits the Hunter-type expansion \eqref{hunterexpans}. Finally, these solutions satisfy the asymptotic bounds $\tilde{\rho}_k\lesssim_k \ev{y}^{-2}$, $\tilde{u}_k \lesssim_k 1$.
\label{mainthm}
\end{thm}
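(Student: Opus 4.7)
The plan is to adapt the Collot--Rapha\"el--Szeftel shooting strategy to the quasilinear system \eqref{selfsimeq2} in three stages: first, construct a one-parameter family of local real-analytic solutions with Hunter data anchored at a sonic point $y_* \in (0,2)$; second, extend these inward and match the boundary conditions \eqref{bdryconds} at $y=0$ via a winding/shooting argument indexed by $k$; third, extend past the sonic point out to $y = \infty$ and verify the decay.

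For the local analysis near $y_*$, I would treat $y_*\in(0,2)$ as the free parameter and seek a formal series $(\tilde\rho,\tilde u) = \sum_{n \geq 0}(\rho_n, u_n)(y-y_*)^n$ whose zeroth- and first-order coefficients are fixed by \eqref{hunterexpans}. Substituting into \eqref{selfsimeq2} yields a linear recursion for $(\rho_n, u_n)$ whose principal part is the indicial matrix inherited from the degenerate symbol at $y_*$. A direct resonance check should show that this matrix is invertible at every $n \geq 2$ on the Hunter branch, so the series is uniquely determined, and convergence on a small disk around $y_*$ would follow from Cauchy-type majorant estimates, uniformly in $y_*$ on compact subsets of $(0,2)$.

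For the inward extension to the origin, so long as the subsonic condition $(\tilde u + y)^2 < 1$ and the positivity $\tilde\rho > 0$ persist, the system is regular and classical ODE theory extends the Hunter branch leftward. Separately, $y=0$ admits, by a Frobenius analysis, a one-parameter family of solutions compatible with \eqref{bdryconds}. Define a shooting map comparing the leftward-propagated Hunter branch with the regular-at-origin family, and seek its zeros as $y_*$ varies. To produce infinitely many zeros, I would linearize about the Friedman solution $(\tfrac{1}{3},-\tfrac{2}{3}y)$ on a long intermediate interval; the linearization is a non-autonomous but clearly oscillatory system, and a Sturm/WKB analysis should show that as $y_*$ traverses a discrete sequence $\{y_*^{(k)}\}_{k \geq N}$ accumulating at a critical value, the leftward-extended Hunter solution winds exactly $k+1$ times around Friedman, yielding the claimed $k+1$ intersections with $y^{-2}$.

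For the outward extension past $y_*$, the system is again regular once $(\tilde u + y)^2 > 1$, and one propagates the analytic local solution from $y_* + \delta$ to $\infty$ using a priori bounds obtained by comparison with the far-field solution $(y^{-2}, 0)$; matching against this comparison as $y \to \infty$ gives the asymptotic bounds $\tilde\rho_k \lesssim_k \ev{y}^{-2}$ and $\tilde u_k \lesssim_k 1$. The main obstacle throughout is the sonic-point degeneracy: the principal matrix in \eqref{selfsimeq2} has a kernel exactly on the Hunter branch at $y_*$, so one must simultaneously construct the Taylor solution, control its analytic dependence on $y_*$, and propagate this dependence uniformly across both sides of $y_*$ in order to make the Friedman winding count rigorous. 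Carefully quantifying this dependence is what forces the result to be stated for $k \geq N \gg 1$ and is the heart of what must be adapted from the semilinear CRS framework to the quasilinear Euler--Poisson setting.
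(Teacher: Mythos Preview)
Your proposal has a genuine gap: you linearize about the wrong background solution. The paper does not perturb off the Friedman solution $(\tfrac{1}{3},-\tfrac{2}{3}y)$ at any stage. In the exterior region $[y_0,\infty)$ the construction perturbs off the \emph{far-field} solution $(y^{-2},0)$, and in the interior region $[0,y_0]$ it perturbs off a rescaled \emph{isothermal sphere} $e^{Q_\lambda}$, where $Q$ solves $\Delta Q+2e^Q=0$. The oscillation mechanism that produces the discrete family is not a Sturm/WKB count around Friedman; it is the specific logarithmic frequency $\tfrac{\sqrt{7}}{2}$ that appears both in the asymptotics $e^Q=y^{-2}+c_2 y^{-5/2}\sin(\tfrac{\sqrt{7}}{2}\log y+d_2)+O(y^{-3})$ of the isothermal sphere and in the hypergeometric homogeneous solution $(p_{\mathrm{hom}},\omega_{\mathrm{hom}})$ of the exterior linearized operator. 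Matching these two oscillations at $y_0$ reduces the problem to finding zeros of $\sin(\tfrac{\sqrt{7}}{2}\log\lambda+d_1-d_2)+O(y_0^2)$, which gives the sequence $\lambda_k$.

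This also means your parameter regime is off. The sonic point does not traverse a sequence $y_*^{(k)}$ accumulating at a critical value; rather $y_*=1+\epsilon$ stays uniformly close to $1$, and the discreteness comes from $\lambda_k\to 0$ (the scale of the interior isothermal sphere). Correspondingly, the $k+1$ intersections of $\tilde\rho_k$ with $y^{-2}$ are inherited from the oscillations of $y^2 e^{Q_{\lambda_k}}-1$ on $[0,y_0]$, not from winding around Friedman. Finally, the paper handles the sonic degeneracy not by propagating analytic dependence of a Taylor solution on $y_*$, but by building $C^1$ solutions via a contraction in Banach spaces whose norms encode the singular structure at $z=1$ (the key being that a component of the nonlinearity is differentiable, not merely continuous, at the sonic point); analyticity at $y_*$ and $0$ is established \emph{a posteriori} by a separate uniqueness argument. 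Your Taylor-series/majorant step at $y_*$ is fine as a local existence statement, but it is not the engine of the construction.
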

The proof of the theorem is contained in Section \ref{matchingsection} and the structure is based on an approach in Collot-Raphaël-Szeftel \cite{crs} (which we discuss more in the next section) adapted to the quasilinear nature of the Euler-Poisson system. While we solve the same system of ODEs as considered in \cite{ghj}, our method of proof is very different, resulting in different types of solutions. In their work, the authors use nonlinear invariances of the flow to perform a shooting argument resulting in the Larson-Penston solution, which is expected to be the stable profile (and hence the most physically relevant). In contrast, our argument perturbs off of the far-field solution and the isothermal sphere (a solution to the static Euler-Poisson system). Relative to the Larson-Penston solution, the solutions we construct are expected to be highly unstable. In particular, we do not construct the first Hunter solution, which is believed to play a role in describing critical collapse (as argued by Harada-Maeda-Semelin \cite{haradamaedasemelin} for instance). Another technical difference is that in \cite{ghj} they work with real analytic functions, while in our case the bulk of the argument is concerned with $C^1$ functions. To show the solutions we construct are analytic at $0$ and $y_*$ we use the analytic local-wellposedness theory of \cite{ghj} along with a singular ODE argument at the $C^1$-level. As a final point of comparison, we note that the methods of \cite{ghj} have been adapted to more complex systems. In particular, the relativistic analog of the Larson-Penston solution was first studied numerically by Ori-Piran \cite{op1,op2,op3} leading to a prediction of the formation of a naked singularity. Guo-Had\v{z}i\'c-Jang \cite{ghj2} were able to both rigorously construct this profile and show the existence of a spacetime exhibiting a naked singularity. Returning to the Newtonian setting, these authors, along with Schrecker, \cite{ghjs} proved the existence of polytropic analogs of the Larson-Penston solution discovered numerically by Yahil \cite{yahil}. It is currently unclear whether the methods in this work extend to either the polytropic or relativistic scenarios.

We finally comment that while we have so far only discussed self-similar implosions, other dynamical regimes for the Euler-Poisson system have also been investigated mathematically. For pressure laws $p=\rho^{\gamma}$ for $1<\gamma<\frac{4}{3}$, Guo-Hadžić-Jang \cite{continuedcollapse} have shown existence of gravitational collapse which is dust-like rather than self-similar. For $\gamma=\frac{4}{3}$, there exists a class of homologous collapsing solutions found by Goldreich-Weber \cite{goldreichweber} and further investigated mathematically by Makino \cite{makino}, Fu-Lin \cite{fulin}, Deng-Liu-Yang-Yao \cite{dengliuyangyao} and Deng-Xiang-Yang \cite{dengxiangyang}. For a further introduction to the literature we mention the survey article \cite{hadzicsurvey}.
\subsection{Comparison with a class of focusing semilinear heat equations}
\label{nlhsubsect}
In this work, we will view \eqref{eulerpoissonsystem} to be modeled by the following equation on $I_t\times \R_x^3$,\begin{equation}
\partial_t \phi=\Delta \phi+\phi^p,\qquad p>5.
\label{nlh}
\end{equation}
While this equation is in many respects different from \eqref{eulerpoissonsystem}, there are also a few similarities.

At a very heuristic level, the $\rho^{-1}\grad \rho$ term in \eqref{eulerpoissonsystem} and the $\Delta \phi$ term in \eqref{nlh} correspond in the sense that they work to prevent the formation of bell-shaped singularities. In contrast, the $2\grad \Delta^{-1}\rho$ term in \eqref{eulerpoissonsystem} and the $\phi^p$ term in \eqref{nlh} accelerate this type of blow up. In each case, the dynamics are driven by this competition of forces, and implosion results from inertia and the latter force winning out over the former.

More formally, each equation admits a $1$-parameter scaling symmetry, and in each case the equation is energy-supercritical. When we apply the scaling symmetry of the Euler-Poisson system, the energy transforms as
\[
E(\frac{1}{\lambda^2}\rho(\frac{t}{\lambda},\frac{x}{\lambda}),u(\frac{t}{\lambda},\frac{x}{\lambda}))=\lambda E(\rho(t,r),u(t,r)).
\]
As a result, rescaling initial data to have smaller initial energy leads shortens the dynamical timescale.

The nonlinear heat equation also has a 1-dimensional scaling symmetry, namely
\[
\phi(t,x)\text{\quad solves \eqref{nlh}}\iff \frac{1}{\lambda^{2/p-1}}\phi(\frac{t}{\lambda^2},\frac{x}{\lambda})\text{\quad solves \eqref{nlh}}.
\]
In this case, an energy which is dissipated by the flow is
\[
E(\phi)=\int_{\R^3}\abs{\nabla \phi}^2-\frac{1}{p+1}\phi^{p+1}dx.
\]
As $p>5$, we see that this equation too is energy-supercritical.

Another shared quality of these equations is the existence of a 1-parameter family of time-independent radial solutions. For the Euler-Poisson system, if one imposes $u=0$, and lets $\rho=e^w$, then taking a divergence of the second equation in \eqref{eulerpoissonsystem} leads to the equation
\begin{equation}
\Delta w+2e^w=0.
\label{stareq}
\end{equation}
Under the assumption of radial symmetry this is the well-studied isothermal sphere ODE and there is a family of smooth, decreasing solutions parameterized by
\[
\lambda\mapsto Q_\lambda=Q(\cdot/\lambda)-2\log(\lambda),
\]
where $Q$ is the solution to \eqref{stareq} with $Q(0)=0$. As $\lambda\to 0$, the family $Q_{\lambda}$ converges pointwise on $\R^3\setminus \{0\}$ to the far-field solution (which is both stationary and self-similar)
\[
Q_{f}=-2\log(r),\qquad e^{Q_{f}}=\frac{1}{r^2}.
\]
For the nonlinear heat equation, imposing $\partial_t\phi =0$ and radial symmetry leads to the classical Lane-Emden ODE
\[
\phi''+\frac{2}{r}\phi'+\phi^{p}=0.
\]
For $p>5$, this ODE may be studied in the same way as for the isothermal sphere equation, and there is a 1-parameter family of smooth, positive, decreasing solutions parameterized by
\[
\lambda\mapsto \Phi_{\lambda}=\lambda^{-\frac{2}{p-1}}\Phi(\cdot/\lambda),
\]
where $\Phi$ satisfies $\Phi(0)=1$. In this case, as $\lambda\to 0$, the limiting far-field solution is
\[
\Phi_{f}=\frac{c_{\infty}}{r^{\frac{2}{p-1}}},\qquad c_{\infty}=\left(\frac{2}{p-1}(1-\frac{2}{p-1})\right)^{\frac{1}{p-1}}.
\]
One can see Chandrasekhar \cite{chandrasekhar}, for instance, for a treatment of both the isothermal sphere and Lane-Emden equations.

In comparison to the Euler-Poisson system, self-similar singularity formation for \eqref{nlh} is better understood mathematically. For these equation, self-similar variables were used to study the stability of ODE-type blowups by Giga-Kohn \cite{gigakohn1,gigakohn2,gigakohn3}, Merle-Zaag \cite{merlezaag} and Bricmont-Kupiainen \cite{bricmontkupiainen}. The first constructions of self-similar solutions to \eqref{nlh} are due to Troy \cite{troy}, Budd-Qi \cite{buddqi} and Lepin \cite{lepin}. Collot-Rapha\"el-Szeftel \cite{crs} proved finite-codimension stable self-similar blowup for the equation outside of radial symmetry. An ingredient of their proof is a new construction of a discrete set of highly oscillatory self-similar solutions to \eqref{nlh} using a perturbative argument based on the Lane-Emden and far-field solutions (Proposition 2.5 of their monograph). Our proof of Theorem \ref{mainthm} is an adaptation of their argument. We comment that while we learned the method from this paper, there is a long history of using bifuraction-type methods in related contexts. A few sources in this direction are the works of Budd-Norbury \cite{buddnorbury},  Biernat-Bizoń \cite{biernat}, Dancer-Guo-Wei \cite{dancerguowei} and Koch \cite{koch}.

We note that in addition to proving existence of self-similar profiles, \cite{crs} proves the linear and non-linear stability of the associated blowups, questions which are not addressed in this work. Their results also fit into the general theory of self-similar blowup for heat equations, which is much more mathematically advanced than the corresponding theory for the Euler-Poisson system. In particular, in the radial setting there is a classification of self-similar blowups due to Matano-Merle \cite{matanomerle1,matanomerle2}. 

The fact that we are still able to construct solutions in the hyperbolic, quasilinear setting is evidence for the strength of the approach of \cite{crs} and its potential applicability to other equations which are focusing, energy-supercritical and have a 1-parameter scaling symmetry.
\subsection{Proof outline}
\label{outlinesec}
We now present a more detailed outline of the proof of Theorem \ref{mainthm}.

We consider a fixed $y_0\ll 1$ and perturbatively construct $C^1$ solutions defined on $[y_0,\infty)$ and $[0,y_0]$ respectively. We call these the ``exterior'' and ``interior'' regions. Matching these solutions at $y_0$ results in a discrete family. Figure \ref{huntersketch} is a sketch illustrating the qualitative locations of $y_0,y_*$ with respect to the features of the solutions we construct (one should imagine many more oscillations and $y_0\ll y_*$). We note that this picture is similar to the numerical solutions plotted in Figure 4 of \cite{maedaharada} (their independent variable is the reciprocal of ours so one should horizontally flip the image). We also comment that we shall see oscillations in the linear analyses for both the exterior and interior regions as $y\to y_0$.
\begin{figure}[h]
\centering
\pic{100}{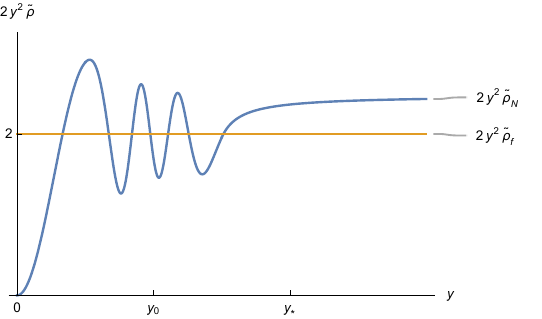}
\caption{Qualitative sketch.}
\label{huntersketch}
\end{figure}

The exterior solutions are modeled on the far-field solution, $(\tilde{\rho}_f,\tilde{u}_f)=(y^{-2},0)$. We let $\epsilon$ be a small parameter, require the solution have a Hunter-type sonic point at $1+\epsilon$ and then rescale the variables to put the degeneracy at $1$ (a similar technique is employed in \cite{ghj}). We then study a linearization of the problem. In comparison to \cite{crs}, our linearized operator $L$ is analogous to their operator $\calL_{\infty}$. In their case, a unique (up to scaling) homogeneous solution $\psi_1$ is identified by imposing decay conditions at $\infty$. In our case, requiring the homogeneous solution be smooth at the singular point $1$ identifies a unique (up to scaling) homogeneous solution $(p_{hom},\omega_{hom})$.

To explain when we mean by this, and illustrate some of the difficulties which do not appear in the problem of \cite{crs}, we now discuss the following model ODE on $(-\infty,\infty)$:
\begin{equation}
\mqty(x & 0\\ 0 & 1)\mqty(f\\ g)'+\mqty(0 & 1\\ 0 & 0)\mqty(f\\ g)=\mqty(F\\ G).
\label{modelequation}
\end{equation}
In this case, we see that two linearly independent homogeneous solutions are
\[
(1,0) \qquad \text{and}\qquad (\log\abs{x},1).
\]
In particular, within the space of homogeneous solutions, there is a $1$-D subspace of distinguished solutions which pass through the origin continuously. Returning to the inhomogeneous problem, if we assume that $f,g$ are a $C^1$ solutions to this equation and plug in $x=0$, we see that from the first line that we must have $g(0)=F(0)$. We also see from inverting the first matrix, that
\[
f'+\frac{g-g(0)}{x}=\frac{F(x)-F(0)}{x}.
\]
Thus, if $f,g$ are $C^1$, the righthand side must be continuous, so $F$ must be differentiable at $0$. Assuming $F,G$ are continuous, and $F$ is differentiable at $0$, we can write down a $1$-D space of solutions to \eqref{modelequation}, namely
\[
g(x)=F(0)+\int_0^x G(x')dx',\qquad f(x)=\int_0^x\frac{F(x')-g(x')}{x'}dx'+C.
\]
A key takeaway is that we need to assume that a component of our non-linearity is differentiable at $0$ in order to have a $C^1$ solution. The linearized operator we study is similar, and the iteration scheme works because under the assumption that our solution is $C^1$, it can be shown that a component of the non-linearity, instead of being merely continuous, is in fact differentiable at $0$.

As another point of comparison, in their problem, $\psi_1$ can be written down using the Tricomi solution to Kummer's equation whereas in our case we can use Gauss's hypergeometric function to express $(p_{hom}(y),\omega_{hom}(y))$. The connection formulae for hypergeometric functions allow us to see that the asymptotics of $(p_{hom}(y),\omega_{hom}(y))$ as $y\to 0$ are given by
\begin{align*}
p_{hom}(y)=\frac{c_1}{y^{1/2}}\sin(\fr \log(y)+d_1)+O(y^{3/2}),\\
\w_{hom}(y)=\frac{c_1}{y^{1/2}}\sin(\fr \log(y)+d_1+\theta_0)+O(y^{3/2}),
\end{align*}
for some explicit $c_1>0$, $d_{1}\in \R/2\pi \Z$ and
\begin{equation}
\theta_0:=\arctan(\frac{\sqrt{7}}{3})+\pi.
\label{theta0def}
\end{equation}

We comment that the frequency $\frac{7}{2}$ is the limit as $p\to\infty$ of the frequency $\omega$ used in the argument of \cite{crs}. To briefly explain where the numerology comes from in our problem (in the exterior region), we note that if one linearizes \eqref{selfsimeq2} about $(\tilde{\rho}_f,\tilde{u}_f)$, the resulting equation, written in matrix form, is
\begin{equation}
\partial_y \mqty(\tilde{\rho}\\ \tilde{u})+\left(
\begin{array}{cc}
 \frac{2 y^2-4}{y^3-y} & \frac{-2}{y^6-y^4} \\
 \frac{2 y^2}{y^2-1} & \frac{2}{y^3-y} \\
\end{array}
\right)\mqty(\tilde{\rho}\\ \tilde{u})=0.
\label{introlinearizedeq}
\end{equation}
This system, viewed as a complex ODE in $y$, has a pole of order $4$ at $y=0$. One obtains a simple pole at $y=0$ if one changes variables to, for instance, $\tilde{p}=y^{2}\tilde{\rho}$, $\tilde{v}=y^{-1}\tilde{u}$. In this case, the resulting system is
\[
\p_y \mqty(\tilde{p}\\ \tilde{v})+\mqty(-\frac{2}{y(y^2-1)} & -\frac{2}{y(y^2-1)}\\ \frac{2}{y(y^2-1)} & \frac{y^2+1}{y(y^2-1)})\mqty(\tilde{p}\\ \tilde{v})=0.
\]
As the pole at $y=0$ is simple, one can locally construct homogeneous solutions by means of the method of Frobenius. In particular, the leading order asymptotics of these homogeneous solutions are governed by the residue of the coefficient of the zeroth order term. We may diagonalize (the negative of) this residue as
\[
\mqty(-2 & -2\\ 2 & 1)=\mqty(1 & 1\\ e^{i\theta_0} & e^{-i\theta_0})\mqty(-\frac{1}{2}+i\frac{\sqrt{7}}{2} & 0\\ 0 & -\frac{1}{2}-i\frac{\sqrt{7}}{2})\mqty(1 & 1\\ e^{i\theta_0} & e^{-i\theta_0})^{-1},
\]
and in doing so we see the constant $\theta_0$ and frequency $\fr$.

\sloppy We use a contractive mapping argument to construct a family of exterior solutions $(\tilde{\rho}_{ext}[\epsilon](y),\tilde{u}_{ext}[\epsilon](y))$ for $\epsilon\ll 1$ which are $C^1$ on $[y_0,\infty)$, solving \eqref{selfsimeq2}, with a Hunter-type sonic point at $y=1+\epsilon$ of the form
\[
\begin{cases}
\tilde{\rho}_{ext}=\frac{1}{y^2}+\frac{\epsilon}{y^2}p_{hom}+\epsilon \rho_{ext}\\
\tilde{u}_{ext}=\epsilon y \omega_{hom}+\epsilon u_{ext}
\end{cases}.
\]

In the interior region, after a rescaling, we approximate $\tilde{\rho}$ by $e^Q$, where $Q$ is the radial solution to \eqref{stareq} with $Q(0)=0$. We define $u_*$ by solving the self-similar conservation of mass equation
\begin{equation}
\begin{cases}
(y\partial_y+2)e^Q+\div(e^Q u_*)=0\\
u_*(0)=0,\quad u_*'(0)=-\frac{2}{3}
\end{cases}.
\label{ustareq}
\end{equation}
By studying the isothermal sphere equation, it is shown that $e^Q$ and $u_*$ have the following asymptotics as $y\to \infty$, for some $c_2>0$ and $d_2\in \R/2\pi \Z$,
\begin{align*}
e^Q&=\frac{1}{y^2}+\frac{c_2 \sin(\fr \log(y)+d_2)}{y^{5/2}}+O(\frac{1}{y^3}),\\
u_*&=c_2y^{1/2} \sin(\fr\log(y)+d_2+\theta_0)+O(1).
\end{align*}
We note that this numerology is consistent with the previous section. In this case, one can read off the frequency $\fr$ from the isothermal sphere ODE by either looking associated phase portrait in homologous variables, or by computing the fundamental solutions to $\Delta+\frac{1}{y^2}$ and the constant $\theta_0$ arises from integrating when solving \eqref{ustareq}. It is shown (again through an iterative construction) that for every $\lambda\leq \frac{y_0}{10}$ one can construct an interior solution $(\tilde{\rho}_{int}[\lambda](y),\tilde{u}_{int}[\lambda](y))$ which is $C^1$ on $[0,y_0]$ (we actually construct a $C^2$ solution, but this distinction doesn't matter for the matching), solving \eqref{selfsimeq2}, with $\tilde{\rho}(0)=\lambda^{-2}$, of the form
\[
\begin{cases}
\tilde{\rho}_{int}=\frac{1}{\lambda^2}e^Q(\cdot/\lambda)+\rho_{int}(\cdot/\lambda)\\
\tilde{u}_{int}=\lambda u_*(\cdot/\lambda)+\lambda^3 u_{int}(\cdot/\lambda)
\end{cases},
\]
where $\rho_{int},u_{int}$ are suitably small (see Lemma \ref{intconstructionlemma} for more details).

With both regimes having been studied, the remaining step is to implement the matching using the parameters $\epsilon,\lambda$. Heuristically this matching is possible because the interior and exterior asymptotic behaviors match as we approach $y_0$ (both regimes witness the oscillation). To simplify the algebra, we note that without loss of generality, $y_0\ll 1$ can be chosen so that
\begin{equation}
\fr\log(y_0)+d_1=\frac{\pi}{2}\mod 2\pi.
\label{wigglecondition}
\end{equation}
We then consider the exterior solutions defined for $\epsilon\lesssim y_0^{5/2}\ll y_0^{1/2}$. We have the following asymptotics uniform with respect to $\epsilon,y_0$
\begin{equation}
\begin{cases}
\tilde{\rho}_{ext}(y_0)-\frac{1}{y_0^2}&=\epsilon \frac{1}{y_0^{5/2}}c_1\sin(\fr \log(y_0)+d_1)+O(\epsilon y_0^{-1/2})\\
\frac{\tilde{u}_{ext}(y_0)}{y_0^3}&=\epsilon  \frac{1}{y_0^{5/2}}c_1\sin(\fr \log(y_0)+d_1+\theta_0)+O(\epsilon y_0^{-1/2})
\end{cases}.
\label{extsummaryasymptot}
\end{equation}

Similarly, for the same fixed $y_0$, we consider interior solutions for $\lambda^{1/2}\lesssim y_0^{5/2}\ll y_0^{1/2}$, we have the following asymptotics uniform with respect to $\lambda,y_0$:
\begin{equation}
\begin{cases}
\tilde{\rho}_{int}(y_0)-\frac{1}{y_0^2}&=\lambda^{1/2}\frac{1}{y_0^{5/2}}c_2 \sin(\fr \log(y_0/\lambda)+d_2)+O(\lambda^{1/2}y_0^{-1/2})\\
\frac{\tilde{u}_{int}(y_0)}{y_0^3}&=\lambda^{1/2}\frac{1}{y_0^{5/2}}c_2\sin(\fr \log(y_0/\lambda)+d_2+\theta_0)+O(\lambda^{1/2}y_0^{-1/2})
\end{cases}.
\label{intsummaryasymptot}
\end{equation}
If we match \eqref{extsummaryasymptot} with \eqref{intsummaryasymptot}, and implement \eqref{wigglecondition}, we get
\[
\begin{cases}
\epsilon c_1+O(\epsilon y_0^2)&=c_2\lambda^{1/2}\cos(\fr\log(\lambda)+d_1-d_2)+O(\lambda^{1/2}y_0^2)\\
\epsilon c_1\cos(\theta_0)+O(\epsilon y_0^2)&=c_2\lambda^{1/2}\cos(\fr\log(\lambda)+d_1-d_2+\theta_0)+O(\lambda^{1/2}y_0^2)
\end{cases}.
\label{matchingintro}
\]
We may then solve the first equation for $\epsilon$, substitute it into the second equation and simplify, so that the matching condition becomes
\[
G[\lambda](y_0)=0
\]
for some function $G[\lambda](y_0)$ such that
\begin{equation}
G[\lambda](y_0)=\sin(\fr \log(\lambda)+d_1-d_2)+O(y_0^2).
\label{secondmatching}
\end{equation}
We then define $\lambda_{k,\pm}=\frac{2}{\sqrt{7}}\exp(-k\pi-d_1+d_2\pm \frac{1}{10})$. Assuming $y_0$ has been chosen sufficiently small, and $k>N$ for some large $N$ (so that $\lambda_{k,\pm}$ are small) we see that by the intermediate value theorem there will exist $\lambda_k\in (\lambda_{k,-},\lambda_{k,+})$ which solve $G[\lambda](y_0)=0$.  Each $\lambda_k$ will correspond to a classical solution obtained by gluing together $(\tilde{\rho}_{ext},\tilde{u}_{ext})$ and $(\tilde{\rho}_{int},\tilde{u}_{int})$. The perturbative nature of this argument also allows us to exactly count the zeroes of $y^2\tilde{\rho}_k-1$ and show that they increase by $1$ as $k$ increases. This oscillation-type result allows us to enumerate the solutions. 

This argument (done rigorously) finishes the bulk of the proof of Theorem \ref{mainthm}. Two things remain: to show each solution has a unique sonic point and to show that the solutions are real analytic. As a first step in counting the sonic points, we note that by construction, any matched solution will have one sonic point near $y=1$. To show that there are no other sonic points, separate arguments are used on $[0,y_0]$ and $[y_0,\infty)$. For the interior, we show $\tilde{u}+y=O(y)$, which rules out sonic points assuming $y_0$ is sufficiently small. For the exterior, we expect $\tilde{u}+y$ to be close to $y$ as we are perturbing off of the far-field solution. Motivated by this, we then show that for the exterior solutions $(\tilde{u}+y)'\geq \frac{1}{2}$, which combined with the absence of sonic points in the interior suffices to show that there is at most one sonic point.

In regards to regularity of the solutions, by standard local ODE theory, it follows that the $C^1$ solutions are locally real analytic at all points except possibly $y\in \{y_*,0\}$. For each of these points, by the methods of \cite{ghj}, it follows that given an appropriate boundary condition there exists a locally a unique real analytic Hunter solution. We then prove that this local solution is the unique one of regularity $C^1$ by linearizing and using its analyticity to derive asymptotics for the principal matrix solution.

The rest of the paper is organized as follows. Sections \ref{extsection} and \ref{intsection} detail the construction of the exterior and interior solutions respectively. The proofs of several lemmas are deferred. Section \ref{matchingsection} details the matching argument and contains the proof of the main theorem. Finally, Sections \ref{extleftovers} and \ref{intleftovers} contain the proofs deferred from Sections \ref{extsection} and \ref{intsection} respectively.

\section{Solving the self-similar equations on $[y_0,\infty)$}
\label{extsection}

\subsection{Choice of variables}
In the exterior region we will first change variables from $\tilde{\rho}, \tilde{u}$ to $\tilde{p},\tilde{\omega}$ defined implicitly by
\[
\tilde{\rho}=\frac{\tilde{p}}{y^2}\qquad \text{and} \qquad \tilde{u}=y(\tilde{\omega}-1).
\]
In this case, \eqref{selfsimeq2} with the Hunter boundary conditions become
\begin{equation}
\begin{cases}
(\tilde{p}\tilde{\omega}y)'-\tilde{p}=0\\
\tilde{\omega}(y(\tilde{\omega}-1))'+\frac{\tilde{p}'}{y \tilde{p}}+\frac{2}{y^2}(\tilde{p}\tilde{\omega}-1)=0\\
\tilde{p}(y_*)=y_*,\qquad \tilde{p}'(y_*)=y_*-1,\qquad \tilde{\omega}(y_*)=\frac{1}{y_*},\qquad \tilde{\omega}'(y_*)=0
\end{cases}.
\label{maedavars}
\end{equation}
The principal motivation for these variables is that the far-field solution takes the relatively simple form $(\tilde{p},\tilde{\omega})=(1,1)$.

We now perform a $y_*$-dependent change of variables that will fix the domain $[y_0,\infty)$, move the sonic point from $y_*$ to $1$ and simplify the values of $p,\omega$ at the sonic point (a similar change of variables is done in \cite{ghj}). In particular, we define
\[
\alpha=\frac{1-y_0}{y_*-y_0},\qquad \beta=\frac{(y_*-1)y_0}{y_*-y_0},\qquad z=\alpha y+\beta,
\]
\[
\tilde{p}=y_* \P(\alpha y+\beta),\qquad \tilde{\omega}=\frac{1}{y_*}\OOmega(\alpha y+\beta).
\]
Under this change of variables, \eqref{maedavars} becomes the following system for $\P(z),\OOmega(z)$,
\begin{equation}
\begin{cases}
\left(\P\OOmega(z-\beta)\right)'-y_* \P=0\\
\OOmega\left((\OOmega-y_*)(z-\beta)\right)'+(\alpha y_*)^2\frac{\P'}{\P(z-\beta)}+2\frac{(\alpha y_*)^2}{(z-\beta)^2}(\P\OOmega -1)=0.\\
\P(1)=1,\qquad \P'(1)=\frac{y_*-1}{\alpha y_*},\qquad \OOmega(1)=1,\qquad \OOmega'(1)=0.
\end{cases}
\label{POmegaeq}
\end{equation}
We next let $y_*=1+\epsilon$ for some parameter $\epsilon$ small. As a note, for notational convenience we continue to use both $\epsilon$ and $y_*$ in the equations. We then switch variables to $\hat{p}(z),\hat{\omega}(z)$ defined by
\[
\P=1+\frac{\epsilon}{\alpha y_*}(\hat{p}-1),\qquad \OOmega=1+\epsilon(\hat{\omega}+1).
\]
We then define the following quantities (which depend on $\epsilon$ as well as on $\hat{p},\hat{\omega}$):
\begin{align*}
A_1(\epsilon, \hat{p},\hat{\omega})&=
\epsilon (1+\frac{\epsilon}{y_*\alpha}(\hat{p}-1))(\hat{\omega}(z-\beta))'+\frac{\epsilon}{y_*\alpha}\hat{p}'(y_*+\epsilon \hat{\omega})(z-\beta),\\
A_2(\epsilon, \hat{p},\hat{\omega})&:=\epsilon(y_*+\epsilon \hat{\omega})(\hat{\omega}(z-\beta))'+\epsilon \hat{p}'\frac{\alpha y_*}{z-\beta}\frac{1}{(1+\frac{\epsilon}{y_*\alpha}(\hat{p}-1))}\\
&+\frac{2(y_*\alpha)^2}{(z-\beta)^2}\left((1+\frac{\epsilon}{y_*\alpha}(\hat{p}-1))(y_*+\epsilon\hat{\omega})-1\right).
\end{align*}
We also define the linear operator
\[
L(\hat{p},\hat{\omega})=\mqty(L_1(\hat{p},\hat{\omega})\\ L_2(\hat{p},\hat{\omega})):=\mqty(z & z\\ \frac{1}{z} & z)\mqty(\hat{p}'\\ \hat{\omega}')+\mqty(0 & 1\\ \frac{2}{z^2} & \frac{2}{z^2}+1)\mqty(\hat{p}\\ \hat{\omega}).
\]
We then note that formally $(A_1(\epsilon, \hat{p},\hat{\omega}),A_2(\epsilon, \hat{p},\hat{\omega}))=\epsilon L+O(\epsilon^2)$. We can thus define
\[
\hat{N}_{i}(\epsilon \hat{p},\hat{\omega}):=\frac{1}{\epsilon}(\epsilon L_i(\hat{p},\hat{\omega})-A_i(\hat{p},\hat{\omega})),\qquad \hat{N}(\epsilon, \hat{p},\hat{\omega}):=(\hat{N}_1(\epsilon, \hat{p},\hat{\omega}),\hat{N}_2(\epsilon, \hat{p},\hat{\omega})).
\]
With this notation, \eqref{POmegaeq} becomes
\begin{equation}
\begin{cases}
L(\hat{p},\hat{\omega})=\hat{N}(\epsilon, \hat{p},\hat{\omega})\\
\hat{p}(1)=1,\quad \hat{p}'(1)=1,\quad \hat{\omega}(1)=-1,\quad \hat{\omega}'(1)=0
\end{cases}.
\label{prediffeq}
\end{equation}
We view this system as a family of ODEs parameterized by $\epsilon$. In the next section we will study the linearized operator. In the subsequent section we will use this information to show that the equation is well-posed for $\epsilon$ close to zero.
\subsection{Properties of the linearized operator}
We now consider the following equation on $(0,\infty)$:
\begin{equation}
L(p,\omega)=(P,\Omega).
\label{linearproblem}
\end{equation}
We remark that $L$ is a variable coefficient operator which is singular at $z=1$. We shall see that when solving from $z=1$ the structure is similar to the model problem \eqref{modelequation} discussed in the introduction. In comparison with that problem, the constraints are similarly easy to derive, but due to the variable coefficients it is harder to identify the distinguished subspace of homogeneous solutions and to show the existence of solutions for the inhomogeneous problem.

In particular, if we assume $p,\omega\in C^1$, then multiplying the system on the left by $\mqty(1 & -1)$, $\mqty(1 & 1)$ respectively, and substituting in $z=1$ give
\begin{equation}
\begin{cases}
p(1)+\omega(1)=\frac{P(1)-\Omega(1)}{-2}\\
p'(1)+\omega'(1)+p(1)+2\omega(1)=\frac{P(1)+\Omega(1)}{2}
\end{cases}.
\label{firstconstraints}
\end{equation}
The first equation yields the constraint $p(1)+\omega(1)=\frac{P(1)-\Omega(1)}{-2}$. We next note that for $z\neq 1$ we may invert the highest order coefficient in \eqref{linearproblem} to obtain
\begin{equation}
\mqty(p\\ \omega)'+\frac{1}{z(z^2-1)}\mqty(-2 & -2\\ 2 & z^2+1)\mqty(p\\ \omega)=\mqty(z & z\\ \frac{1}{z} & z)^{-1}\mqty(P\\ \Omega).
\label{standardform}
\end{equation}
Multiplication by $\mqty(1 & -1)$ then yields that as $z\to 1$
\[
p'=\frac{(P-\Omega)+2(p+\omega)}{2(z-1)}+O(1).
\]
In particular, if $p,\omega\in C^1$ we must have that $\frac{(P-\Omega)-(P(1)-\Omega(1))}{z-1}$ is continuous, or in other words $P-\Omega$ is differentiable at $z=1$. In this case, we again multiply \eqref{linearproblem} by $\mqty(1 & -1)$, and this time expand to order $o(z-1)$. In this case, we obtain
\[
2(z-1)p'(z)-2(1-2(z-1))(p(z)-\omega(z))=P(1)-\Omega(1)+(P-\Omega)'(1)(z-1)+o(z-1).
\]
Simplifying this using \eqref{firstconstraints} gives the equation
\[
\omega'(1)=-(P(1)+\Omega(1))-\frac{1}{2}(P-\Omega)'(1).
\]
This equation, along with \eqref{firstconstraints} imply that if $P,\Omega,\frac{P-\Omega}{z-1}$ are continuous and
\begin{equation}
P(1)=\Omega(1)=(P-\Omega)'(1)=0.
\label{nonlinconstraints}
\end{equation}
 then if a solution to $L(p,\omega)=(P,\Omega)$ exists, then up to scaling by a constant, its Taylor expansion at $z=1$ must be
\begin{equation}
p=1+(z-1)+o(z-1),\qquad \omega=-1+o(z-1).
\label{homogeneoustaylor}
\end{equation}
In particular, any homogeneous solution must have such an expansion.

Our next goal is to find the distinguished subspace of homogeneous solutions and to solve inhomogeneous problem for data subject to the constraints. A key observation is that the equation $L(p,\omega)=0$ can be transformed into a rational-coefficient 2nd order ODE with three simple singularities. This transformation allows one to write down homogeneous solutions using Gauss's hypergeometric function.

We use the terminology that a fundamental matrix associated to $L$ on an interval $I$ is a smooth matrix-valued function $U(z)$ defined on $I$ such that $L U=0$. We observe that if $U$ is a fundamental matrix, and $M$ is a constant invertible matrix then $U\cdot M$ is also a fundamental matrix, and moreover any two fundamental matrices are related in this way.
\begin{lem}
\begin{enumerate}[wide,label=(\alph*)]
\item
There exists unique $(p_{hom}(z),\omega_{hom}(z))$ analytic on $(0,\infty)$ such that $L(p_{hom},\omega_{hom})=0$ and such that their first order Taylor polynomials at $z=1$ agree with \eqref{homogeneoustaylor}.
\item
There exists analytic fundamental matrices $U_{\infty}(z),U_0(z)$ for $L$ defined on $(1,\infty)$ and $(0,1)$ respectively. Moreover, these can be chosen such that as $z\to \infty$
\[
U_{\infty}(z)=\mqty(1 & 0\\ 0 & \frac{1}{z})+\mqty(O(\frac{1}{z^2}) & O(\frac{1}{z^2})\\ O(\frac{1}{z^3}) & O(\frac{1}{z^3})),\qquad U_{\infty}^{-1}(z)=\mqty(1 & 0\\ 0 & z)+\mqty(O(\frac{1}{z^2}) & O(\frac{1}{z})\\ O(\frac{1}{z^2}) & O(\frac{1}{z})).
\]
and as $z\to 0$
\[
U_{0}(z)=\frac{1}{z^{1/2}}\mqty(\cos(\frac{\sqrt{7}}{2}\log(z))& \sin(\frac{\sqrt{7}}{2}\log(z))\\ \cos(\fr\log(z)+\theta_0) &\sin(\fr\log(z)+\theta_0))+O(z^{3/2}).
\]
\[
U_{0}^{-1}(z)=\frac{z^{1/2}}{\sin(\theta_0)}\mqty(\sin(\frac{\sqrt{7}}{2}\log(z)+\theta_0)& -\sin(\frac{\sqrt{7}}{2}\log(z))\\ -\cos(\fr\log(z)+\theta_0) &\cos(\fr\log(z)))+O(z^{5/2}).
\]
\item
The asymptotics of the fundamental solution $(p_{hom},\omega_{hom})$ as $z\to \infty,0$ respectively are, for some $\mu_3,\mu_4,c_1,d_1$ with $\sqrt{\mu_3^2+\mu_4^2},c_1\neq 0$,
\begin{equation}
\mqty(p_{hom}\\ \omega_{hom})=\mqty(\mu_3\\ \frac{\mu_4}{z})+\mqty(O(\frac{1}{z^2})\\ O(\frac{1}{z^3}))
\label{infinityasymptot}
\end{equation}
\begin{equation}
\mqty(p_{hom}\\ \omega_{hom})=\frac{c_1}{z^{1/2}}\mqty(\sin(\fr \log(z)+d_1)\\ \sin(\fr \log(z)+d_1+\theta_0))+O(z^{3/2}).
\label{0asymptot}
\end{equation}
\item
Also, there exists constant invertible matrices $M_0,M_{\infty}$ such that as $z\to 1^{\pm}$, we have respectively
\begin{equation}
U_\infty M_\infty,U_0 M_0=\mqty(1 & \log\abs{z-1}+\frac{1}{2}\\ -1 & -\log\abs{z-1}+\frac{1}{2})+\mqty(O(z-1) & O(\abs{z-1}\log\abs{z-1})\\ O(z-1) & O(\abs{z-1}\log\abs{z-1}))
\label{near1asymptot}
\end{equation}
\begin{multline*}
(U_\infty M_\infty)^{-1},(U_0 M_0)^{-1}=\mqty(-\log\abs{z-1}+\frac{1}{2} & -\log\abs{z-1}-\frac{1}{2}\\ 1 & 1)\\
+\mqty(O(\abs{z-1}\log\abs{z-1}) & O(\abs{z-1}\log\abs{z-1})\\ O(z-1) & O(z-1))
\end{multline*}
\item
If we define the principal matrix solution on $(0,1)\times (0,1)\cup (1,\infty)\times (1,\infty)$ to be
\[
S(z,z')=\begin{cases}
U_0(z)U_0^{-1}(z')\mqty(z' & z'\\ \frac{1}{z'} & z')^{-1}, & (z,z')\in (0,1)\times (0,1)\\
U_\infty(z)U_\infty^{-1}(z')\mqty(z' & z'\\ \frac{1}{z'} & z')^{-1}, & (z,z')\in (1,\infty)\times (1,\infty)
\end{cases}.
\]
then if $P(z),\Omega(z),\frac{P-\Omega-(P(1)-\Omega(1))}{z-1}$ are continuous on $(0,\infty)$ and satisfy the constraints \eqref{nonlinconstraints} then
\begin{equation}
(p,\omega):=R(P(z),\Omega(z)):=\int_{1}^z S(z,z')\mqty(P(z')\\ \Omega(z'))dz'
\label{Rdef}
\end{equation}
defines an element of $C^1((0,\infty))$ which solves $L(p,\omega)=(P,\Omega)$ and 
\[
p(0)=p'(0)=\omega(0)=\omega'(0)=0.
\]
\end{enumerate}
\label{explicitlemma}
\end{lem}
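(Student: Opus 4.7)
The plan is to reduce the first-order system $L(p,\omega)=0$ to a single scalar second-order linear ODE in $p$ with regular singular points at $z=0,\pm 1,\infty$, which is then amenable to Frobenius analysis at each singular point and to global analytic continuation along $(0,1)$ and $(1,\infty)$. Subtracting $L_1$ from $L_2$ eliminates $\omega'$ and produces the algebraic identity $\omega=-p+\frac{z(z^2-1)}{2}p'$; substituting this back into $L_1=0$ yields
\[
z^2(z^2-1)p''+2z(2z^2-1)p'-2p=0.
\]
Every claim in parts (a)--(d) will then follow from computing the indicial exponents at $z=0,1,\infty$ and keeping track of the connection between the local Frobenius bases, while (e) is a variation-of-parameters computation that mirrors the model problem \eqref{modelequation}.

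At $z=1$ the indicial equation for the reduced ODE is $r^2=0$, so Frobenius produces exactly one analytic local solution (plus one containing a $\log(z-1)$ factor); the analytic branch has a convergent power series obtained from the recursion, and fixing its scalar through \eqref{homogeneoustaylor} defines $p_{hom}$, with $\omega_{hom}$ recovered from the algebraic formula above. Extending $p_{hom}$ analytically along $(0,1)\cup(1,\infty)$ is standard, and analyticity across $z=1$ is automatic since we have chosen the non-logarithmic Frobenius branch. At $z=\infty$ the two local solutions behave like $1$ and $z^{-3}$; normalizing them so that the resulting fundamental matrix has the leading term $\operatorname{diag}(1,1/z)$ yields the $U_\infty$ claimed in (b). At $z=0$ the indicial equation is $r^2+r+2=0$, with complex conjugate roots $\frac{-1\pm i\sqrt{7}}{2}$; taking real and imaginary parts of the resulting Frobenius solutions produces the oscillatory real basis $U_0$, and the constant $\theta_0$ arises as the argument of the eigenvector of the residue matrix displayed in the introduction. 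For (d), the double root at $z=1$ produces a Frobenius pair of the form (analytic) and (analytic)$+\log(z-1)\cdot$(analytic), and bringing $U_\infty,U_0$ into the normalized form \eqref{near1asymptot} is a $2\times 2$ linear change of basis: the matrices $M_\infty,M_0$ are determined by requiring that $U_\infty M_\infty$ and $U_0 M_0$ produce exactly this pair with the displayed subleading constants. The inverses $U_\infty^{-1},U_0^{-1}$ are computed via Cramer's rule combined with Abel's formula applied to the standard form \eqref{standardform}, which gives the Wronskian determinant up to a nonzero multiplicative constant.

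For (c), once $p_{hom}$ is expressed in the basis $U_\infty$ on $(1,\infty)$ and in $U_0$ on $(0,1)$, both asymptotic expansions \eqref{infinityasymptot} and \eqref{0asymptot} are immediate; the genuine issue is proving $c_1,\sqrt{\mu_3^2+\mu_4^2}\neq 0$. The cleanest route is to observe that after a substitution of the form $w=z^2$ together with a prefactor $p=w^{s}q$, the reduced ODE becomes the classical Gauss hypergeometric equation $w(1-w)q''+(c-(a+b+1)w)q'-ab q=0$, whose connection coefficients between Frobenius bases at $0,1,\infty$ are ratios of $\Gamma$-functions with arguments involving $\frac{1}{2}\pm i\frac{\sqrt{7}}{2}$; these $\Gamma$-values are manifestly finite and nonzero, so the connection coefficients do not vanish. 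For (e), variation of parameters shows that $R(P,\Omega)$ in \eqref{Rdef} solves $L(p,\omega)=(P,\Omega)$ on each of $(0,1),(1,\infty)$ by construction; the $C^1$-regularity across $z=1$ is handled exactly as in the model problem \eqref{modelequation}: the constraints \eqref{nonlinconstraints} kill the $\log\abs{z-1}$ divergence of $S(z,z')$ as $z'\to 1$, and the hypothesis that $\frac{P-\Omega-(P(1)-\Omega(1))}{z-1}$ is continuous supplies the differentiability of $p'$ needed at $z=1$. Vanishing of $(p,\omega)$ and their derivatives at $z=0$ is read off from the explicit $z^{1/2}$-factor in $U_0^{-1}$ combined with absolute convergence of the integral. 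The main technical obstacle across the whole lemma is managing the two qualitatively different singular behaviors (oscillatory at $z=0$, logarithmic at $z=1$) inside a single principal matrix solution $S$ while ensuring that the convolution $R$ delivers genuine $C^1$ solutions whenever the data satisfy \eqref{nonlinconstraints}.
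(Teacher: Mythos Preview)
Your approach is essentially the same as the paper's: reduce the system to the scalar second-order equation $z^2(z^2-1)p''+2z(2z^2-1)p'-2p=0$ via the algebraic relation $\omega=-p+\tfrac{z(z^2-1)}{2}p'$, transform to a Gauss hypergeometric equation by the substitution $x=z^2$ (the paper then sets $\xi=1-1/x$ rather than using a power prefactor, but this is cosmetic), and read off the Frobenius exponents and connection coefficients in terms of $\Gamma$-values to obtain the asymptotics and the nonvanishing of $c_1,\sqrt{\mu_3^2+\mu_4^2}$.

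One small correction in (e): the conclusion ``$p(0)=p'(0)=\omega(0)=\omega'(0)=0$'' in the statement is a typo for $z=1$ (as is clear from Definition~\ref{lindef} and the paper's own proof), so your argument about the $z^{1/2}$-factor in $U_0^{-1}$ forcing vanishing at $z=0$ is addressing the wrong point. What you actually need is that $(p,\omega)$ and their first derivatives vanish at $z=1$, and this comes from the expansion \eqref{near1asymptot}: the constraints \eqref{nonlinconstraints} force the integrand $(U_\bullet M_\bullet)^{-1}\bigl(\begin{smallmatrix}z'&z'\\1/z'&z'\end{smallmatrix}\bigr)^{-1}\bigl(\begin{smallmatrix}P\\\Omega\end{smallmatrix}\bigr)$ to be $o(1)$ in its first component and $o(z'-1)$ in its second as $z'\to 1$, so after multiplying by $U_\bullet(z)M_\bullet$ one gets $(p,\omega)=o(|z-1|\log|z-1|)$ and $p+\omega=o(|z-1|^2\log|z-1|)$, which together with \eqref{standardform} yields the $C^1$ extension across $z=1$ with vanishing first-order Taylor data there.
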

For the proof, see Subsection \ref{explicitlemmaproof}.

\subsection{Difference equations and iteration scheme}
We now consider \eqref{prediffeq}, and substitute the following ansatz, where $(p,\omega)$ are now unknown,
\[
(\hat{p},\hat{\omega})=(p_{hom},\omega_{hom})+(p,\omega).
\]
This gives the equation
\begin{equation}
\begin{cases}
L(p,\omega)=N(\epsilon, p,\omega)\\
p(1)=p'(1)=\omega(1)=\omega'(1)=0
\end{cases}.
\label{fulldiffeq}
\end{equation}
where we have defined
\[
N(\epsilon, p,\omega)=\hat{N}(\epsilon, p_{hom}+p,\omega_{hom}+\omega).
\]
The solution operator defined in Lemma \ref{explicitlemma}.c allows us to rewrite \eqref{fulldiffeq} as the integral equation
\begin{equation}
(p,\omega)=R N(\epsilon, p,\omega).
\label{integraleq}
\end{equation}
As a note, for $N(\epsilon,p,\omega)$ to be well-defined, we impose the mass-positivity condition
\[
1+\frac{\epsilon}{y_*\alpha}(p_{hom}+p-1)>0.
\]
on solutions to \eqref{integraleq}. The integral equation can then be solved on $[y_0,\infty)$ using a classical fixed point method with suitable Banach spaces. Before defining these Banach spaces, we first introduce the notation
\[
D_{\infty}f(z)=-z^2\partial_z f.
\]
\begin{defin}
We define our solution norm to be
\begin{align*}
\norm{(p,\w)}_{X_{y_0}}:=&\sup_{z\geq 1}\abs{p}+\abs{z \w}+\abs{D_{\infty}p}+\abs{D_{\infty}(z\omega)}
+\sup_{y_0\leq z\leq 1}z^{1/2}(\abs{p}+\abs{\w})+z^{3/2}(\abs{p'}+\abs{\w'})).
\end{align*}
We define our solution Banach space to be
\begin{align*}
X_{y_0}:=&\{(p,\w)\in C^1([y_0,\infty))\colon\quad \lim_{z\to \infty}(D_{\infty}(p),D_\infty(z\omega))\quad \text{exists},\\
&\norm{(p,\w)}_{X_{y_0}}<\infty\qquad \text{and} \qquad p(1)=\omega(1)=p'(1)=\omega'(1)=0\}.
\end{align*}
\label{lindef}
\end{defin}

\begin{defin} We define our nonlinear norm to be
\begin{align*}
\norm{\mqty(P,\Omega)}_{N_{y_0}^0}:=\sup_{\frac{1}{2}\leq z\leq 2}\abs{\frac{P-\Omega}{z-1}}+\sup_{z\geq 1}\abs{z P}+\abs{z^2 \Omega}+\sup_{y_0\leq z\leq 1}(\abs{z P}+\abs{z^3 \Omega}).
\end{align*}
We define our ``nonlinear'' Banach space by letting
\begin{align*}
N_{y_0}&:=\{(P,\Omega)\in C^0([y_0,\infty))\colon \frac{P-\Omega}{z-1}\in C^0([y_0,\infty)),\quad \lim_{z\to\infty}(z P,z^2 \Omega)\quad \text{exists},\\
&\qquad \norm{(P,\Omega)}_{N_{y_0}}<\infty\quad \text{and}\qquad P(1)=\Omega(1)=(P-\Omega)'(1)=0\}.
\end{align*}
\end{defin}
As a comment, the first term in the norm is non-standard and it requires a computation to show that it is, in fact, finite for non-linearities we consider.

We now state the three lemmas which will lead to our construction of exterior solutions.
\begin{lem} For $y_0\leq \frac{1}{2}$ there exists a $C_1\geq 1$ (independent of $y_0$) so that if $(P,\Omega)\in N_{y_0}$
  then 
\[
(p,\omega):=R(P,\Omega)\in X_{y_0}\qquad \text{and}\qquad \norm{(p,\omega)}_{X_{y_0}}\leq C_1y_0^{-1/2} \norm{(P,\Omega)}_{N_{y_0}}.
\]
\label{linearlemma}
\end{lem}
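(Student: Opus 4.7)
The plan is to estimate the integral representation (\ref{Rdef}) piecewise on the exterior region $[1,\infty)$ and the interior region $[y_0,1]$, using the explicit asymptotics of the fundamental matrices $U_\infty$ and $U_0$ from Lemma \ref{explicitlemma}(b). The $C^1$ regularity and the boundary conditions at $z=1$ required for membership in $X_{y_0}$ are already supplied by Lemma \ref{explicitlemma}(e), so it remains to establish the quantitative norm bound together with the existence of $\lim_{z\to\infty}(D_\infty p, D_\infty(z\omega))$. The first step is to analyze the normalized forcing
\[
F(z'):=\mqty(z' & z'\\ 1/z' & z')^{-1}\mqty(P(z')\\ \Omega(z'))=\frac{1}{z'^2-1}\mqty(z'(P-\Omega)\\ z'\Omega-P/z').
\]
Direct computation using the $N_{y_0}$-bounds shows $|F(z')|\lesssim \|(P,\Omega)\|_{N_{y_0}}/z'^2$ on $[y_0,1/2]$ (using the $|z^3\Omega|$-control) and on $[2,\infty)$ (using the $|z^2\Omega|$-control), while on the bounded window $[1/2,2]$ the constraint that $(P-\Omega)/(z-1)$ is continuous together with the vanishing $P(1)=\Omega(1)=0$ gives $|F(z')|\lesssim \|(P,\Omega)\|_{N_{y_0}}$.

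For the exterior, $z\geq 1$, write $R(P,\Omega)(z)=U_\infty(z)\int_1^z U_\infty^{-1}(z')F(z')\,dz'$. The asymptotic form of $U_\infty^{-1}$ combined with the decay of $F$ yields an integrand of size $\|(P,\Omega)\|_{N_{y_0}}/z'^2$ for large $z'$, so the tail integral is absolutely convergent and of size $O(1/z)\|(P,\Omega)\|_{N_{y_0}}$. Applying $U_\infty(z)$ with its leading diagonal $\mathrm{diag}(1,1/z)$ gives $|p(z)|+|z\omega(z)|\lesssim \|(P,\Omega)\|_{N_{y_0}}$; the existence of the limits of $D_\infty p$ and $D_\infty(z\omega)$ follows by combining the $O(1/z^2)$ subleading corrections in $U_\infty$ with the $O(1/z)$ tail estimate on the integral. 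On the intermediate band $[1,2]$, the logarithmic blowup in (\ref{near1asymptot}) is integrable and is absorbed by the boundedness of $F$, so $R(P,\Omega)$ remains bounded uniformly. No $y_0^{-1/2}$ loss arises in this regime.

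For the interior, $z\in[y_0,1]$, write $R(P,\Omega)(z)=-U_0(z)\int_z^1 U_0^{-1}(z')F(z')\,dz'$. The estimates $|U_0(z)|\lesssim z^{-1/2}$, $|U_0^{-1}(z')|\lesssim z'^{1/2}$, and $|F(z')|\lesssim \|(P,\Omega)\|_{N_{y_0}}/z'^2$ combine to give $|U_0^{-1}(z')F(z')|\lesssim z'^{-3/2}\|(P,\Omega)\|_{N_{y_0}}$, whose integral from $z$ to $1$ is $\lesssim z^{-1/2}\|(P,\Omega)\|_{N_{y_0}}$. Multiplying by $|U_0(z)|$ yields $|R(P,\Omega)(z)|\lesssim z^{-1}\|(P,\Omega)\|_{N_{y_0}}$, so $z^{1/2}(|p|+|\omega|)\lesssim z^{-1/2}\|(P,\Omega)\|_{N_{y_0}}\leq y_0^{-1/2}\|(P,\Omega)\|_{N_{y_0}}$; this is exactly the source of the claimed loss. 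For the derivative part of the norm on this interval, one uses the standard form (\ref{standardform}) to write $(p',\omega')$ as a combination of $(p,\omega)$ and $(P,\Omega)$ with a coefficient matrix of size $\lesssim 1$ for $z\leq 1/2$, so the bounds on $(p,\omega)$ translate into $z^{3/2}(|p'|+|\omega'|)\lesssim y_0^{-1/2}\|(P,\Omega)\|_{N_{y_0}}$. The main technical point is carefully tracking the $z^{-1/2}$ growth of $U_0$ and verifying that the $y_0^{-1/2}$ loss is confined to the interior, a distinction preserved because in the exterior the oscillatory behavior of $U_0$ is replaced by the flat asymptotics of $U_\infty$ and the far-field decay of $F$.
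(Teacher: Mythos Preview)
Your overall strategy matches the paper's: split into exterior and interior, use the asymptotics of $U_\infty$ and $U_0$ from Lemma~\ref{explicitlemma}(b), and observe that the $y_0^{-1/2}$ loss originates entirely from the growth $|U_0(z)|\lesssim z^{-1/2}$ on the interior. Your large-$z$ and small-$z$ estimates are correct.

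There is, however, a genuine gap in a neighborhood of $z=1$. The $X_{y_0}$ norm requires uniform control of $|D_\infty p|+|D_\infty(z\omega)|$ on all of $[1,\infty)$ and of $z^{3/2}(|p'|+|\omega'|)$ on all of $[y_0,1]$; in particular you need a \emph{quantitative} bound on $|p'|+|\omega'|$ for $z\in[\tfrac12,2]$. Your argument there (``the logarithmic blowup is integrable and is absorbed by the boundedness of $F$'') only yields, via crude matrix norms, $|p|+|\omega|\lesssim |z-1|(\log|z-1|)^2\|(P,\Omega)\|_{N_{y_0}}$. If you then feed this into \eqref{standardform} to recover $(p',\omega')$, the zeroth-order coefficient $\tfrac{1}{z(z^2-1)}\bigl(\begin{smallmatrix}-2&-2\\2&z^2+1\end{smallmatrix}\bigr)$ applied to $(p,\omega)$ produces a term of size $|(p+\omega)/(z-1)|$, which your crude bound only controls by $(\log|z-1|)^2$, not by a constant. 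So the derivative estimate on $[\tfrac12,2]$ does not close as written.

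What is missing is the structural cancellation recorded in \eqref{asymptoticcancellation}: working in the $M_\infty$-basis, the first row of $(U_\infty M_\infty)^{-1}\bigl(\begin{smallmatrix}z'&z'\\1/z'&z'\end{smallmatrix}\bigr)^{-1}$ has leading part $\tfrac{1}{2(z'-1)}(1,-1)$, which against $(P,\Omega)^T$ gives exactly $(P-\Omega)/(2(z'-1))$, bounded by the $N_{y_0}$ norm; the second row is $O(1)$. After integrating and multiplying back by $U_\infty M_\infty$, the sum $p+\omega$ picks out only the second component (since the first column of $U_\infty M_\infty$ sums to $O(z-1)$), yielding $|(p+\omega)/(z-1)|\lesssim\|(P,\Omega)\|_{N_{y_0}}$. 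With this in hand, \eqref{rewrittenform} gives $\sup_{[1/2,2]}(|p'|+|\omega'|)\lesssim\|(P,\Omega)\|_{N_{y_0}}$. This is precisely how the paper handles the window around $z=1$, and it is the one place where matrix-norm estimates alone are insufficient.
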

For the proof, see Subsection \ref{linearlemmaproof}.

\begin{lem}
If $y_0\leq \frac{1}{2}$, then for all $C>0$ there exists a $C_0(C)\ll 1$ (independent of $y_0$) such that if $\epsilon\leq C_0(C)y_0^{1/2}$ (ensuring mass-positivity) then there exists $C_2(C)\geq 1$ (independent of $y_0,\epsilon$), such that,
\begin{enumerate}[wide,label=(\alph*)]
\item
If $(p,\omega)\in X_{y_0}$, $\norm{(p,\w)}_{X_{y_0}}\leq C$ then
\begin{equation}
\mqty(P,\Omega):=N(\epsilon, p,\omega)\in N_{y_0}\quad \text{and}\qquad \norm{(P,\Omega)}_{N_{y_0}}\leq \abs{\epsilon}C_2.
\label{bddnesseq}
\end{equation}
\item
We have that $(\epsilon,p,\omega)\mapsto N(\epsilon, p,\omega)$ is $C^{1}$ as a function of $(\epsilon,p,\omega)$ with domain
\[
\calD_{C}:=(-C_0(C)y_0^{1/2},C_0(C)y_0^{1/2})\times \{(p,\omega)\in X_{y_0}\colon \norm{p,\omega}_{X_{y_0}^1}<C\}
\]
valued in $N_{y_0}$, with 
\begin{equation}
\norm{(\pdv{N}{p},\pdv{N}{\omega})}_{L(X_{y_0},N_{y_0})}\leq C_2 \abs{\epsilon_0},\qquad \norm{\pdv{N}{\epsilon}}_{N_{y_0}}\leq C_2
\label{derivativebddnesseq}
\end{equation}
for all $(\epsilon_0,p_0,\omega_0)\in \calD_{C}$. In addition, if $(\epsilon_1,p_1,\omega_1)\in \calD_{C}$ we have the following Lipschitz control of the derivative
\begin{equation}
\norm{\pdv{N}{\epsilon}|_{(\epsilon_1,p_1,\omega_1)}-\pdv{N}{\epsilon}|_{(\epsilon_0,p_0,\omega_0)}}_{N_{y_0}}\leq C_2(\abs{\epsilon_1-\epsilon_0}+\norm{(p_1,\omega_1)-(p_0,\omega_0)}_{X_{y_0}}),
\label{derivativelipschitz1}
\end{equation}
\begin{multline}
\norm{(\pdv{N}{p},\pdv{N}{\omega})|_{(\epsilon_1,p_1,\omega_1)}-(\pdv{N}{p},\pdv{N}{\omega})|_{(\epsilon_0,p_0,\omega_0)}}_{L(X_{y_0},N_{y_0})}\\
\qquad\leq C_2\left(\abs{\epsilon_1-\epsilon_0}+\max(\abs{\epsilon_0},\abs{\epsilon_1})\norm{(p_1,\omega_1)-(p_0,\omega_0)}_{X_{y_0}}\right).
\label{derivativelipschitz2}
\end{multline}
\end{enumerate}
\label{nonlinearlemma}
\end{lem}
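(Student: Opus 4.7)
The plan is to work directly from the explicit polynomial structure of the nonlinearity. I would first check that $A_i/\epsilon$ reduces to $L_i(\hat{p},\hat{\omega})$ in the limit $\epsilon \to 0$ (where $y_* \to 1$, $\alpha \to 1$, $\beta \to 0$) by direct substitution. Consequently
\[
\hat{N}_i(\epsilon,\hat{p},\hat{\omega}) = \epsilon\, Q_i(\epsilon,\hat{p},\hat{p}',\hat{\omega},\hat{\omega}'),
\]
where each $Q_i$ is polynomial of total degree at most $2$ in its last four arguments, with coefficients analytic in $\epsilon$ for $|\epsilon| < C_0 y_0^{1/2}$ and rational in $z$ with poles only at $z = 0$ and $z = \beta = O(\epsilon y_0)$, both sitting inside $[0, y_0/2]$ provided $C_0(C)$ is small. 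The mass-positivity assumption keeps $1 + \frac{\epsilon}{y_*\alpha}(\hat{p}-1)$ bounded away from zero on the $C$-ball in $X_{y_0}$, so all denominators appearing in $Q_i$ are uniformly controlled. Substituting $\hat{p} = p_{hom} + p$, $\hat{\omega} = \omega_{hom} + \omega$ expresses $N$ as a pointwise-bounded, $\epsilon$-analytic, quadratic-in-$(p, p', \omega, \omega')$ expression carrying an overall $\epsilon$ factor, which is the source of the $|\epsilon|$ prefactor in the target bounds.

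Next I would verify the sonic-point boundary conditions on $(P, \Omega) := N(\epsilon, p, \omega)$. Because $p_{hom}, \omega_{hom}$ satisfy \eqref{homogeneoustaylor} and $p = p' = \omega = \omega' = 0$ at $z = 1$, we have $\hat{p}(1) = 1$, $\hat{p}'(1) = 1$, $\hat{\omega}(1) = -1$, $\hat{\omega}'(1) = 0$. Using these together with the identity $1 - \beta = \alpha y_*$, a short direct substitution yields $A_1(\epsilon,\hat{p},\hat{\omega})(1) = A_2(\epsilon,\hat{p},\hat{\omega})(1) = L_1(\hat{p},\hat{\omega})(1) = L_2(\hat{p},\hat{\omega})(1) = 0$, giving $P(1) = \Omega(1) = 0$. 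The harder task — the main obstacle in the proof — is verifying that $(P - \Omega)/(z-1)$ extends continuously to zero at $z = 1$. Because $(p, \omega) \in X_{y_0}$ provides only $C^1$ regularity, $\hat{p}''(1)$ and $\hat{\omega}''(1)$ need not exist, so the verification must proceed using only the Peano-form expansions $\hat{p}(z) = 1 + (z-1) + o(z-1)$, $\hat{\omega}(z) = -1 + o(z-1)$, $\hat{p}'(z) = 1 + o(1)$, $\hat{\omega}'(z) = o(1)$. A direct calculation shows that the $O(1)$ and $O(z-1)$ contributions to $L_1 - L_2 = (z - z^{-1})\hat{p}' - 2z^{-2}(\hat{p} + \hat{\omega})$ cancel, leaving an $o(z-1)$ remainder; the analogous cancellation for $(A_1 - A_2)/\epsilon$ follows from the same Taylor expansion of the explicit expression and is algebraically enforced by the same identity $1 - \beta = \alpha y_*$ together with $y_* - 1 = \epsilon$.

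With the constraints verified, part (a) reduces to region-splitting. For $y_0 \le z \le 1$, the bounds $|p_{hom}|, |\omega_{hom}| \lesssim z^{-1/2}$ and $|p_{hom}'|, |\omega_{hom}'| \lesssim z^{-3/2}$ from Lemma \ref{explicitlemma} combined with the $X_{y_0}$-norm yield $|\hat{p}|, |\hat{\omega}| \lesssim z^{-1/2}$ and $|\hat{p}'|, |\hat{\omega}'| \lesssim z^{-3/2}$, so quadratic combinations entering $Q_i$ are pointwise $O(z^{-3})$, absorbed by the weights $z, z^3$ defining $\|\cdot\|_{N_{y_0}}$ on this region. For $z \ge 1$, the $D_\infty$ component of $\|\cdot\|_{X_{y_0}}$ and the asymptotics of $(p_{hom}, \omega_{hom})$ at $\infty$ give the decay needed for the $z P, z^2 \Omega$ weights. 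The sup of $(P - \Omega)/(z - 1)$ on $[1/2, 2]$ follows from the constraint analysis of the previous paragraph combined with a polynomial Lipschitz bound. The overall $\epsilon$ factor from the first paragraph yields the $|\epsilon| C_2$ estimate. Part (b) follows by applying the same region-splitting to explicit partial derivatives of $N$. The key observation is that $\partial_p N$ and $\partial_\omega N$ inherit the $\epsilon$-factor of $N$ (by differentiating the factorization $N = \epsilon \cdot Q$ in $p$ or $\omega$), producing the $|\epsilon_0|$ in \eqref{derivativebddnesseq}, whereas $\partial_\epsilon N$ is merely $O(1)$. The Lipschitz estimates \eqref{derivativelipschitz1}--\eqref{derivativelipschitz2} follow by the mean value theorem applied to second derivatives of $N$, which are themselves polynomial in the same variables and bounded by identical region-splitting.
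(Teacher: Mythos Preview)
Your overall plan --- verify the boundary constraints at $z=1$ (including the delicate vanishing of $(P-\Omega)'(1)$ using only $C^1$ data), then bound $\|(P,\Omega)\|_{N_{y_0}}$ by region-splitting, then differentiate --- is exactly the paper's approach. The paper carries this out by writing $P-\Omega = \epsilon(\hat{\omega}'M_1 + \hat{p}'M_2 + M_3 + M_4)$ and $P=\epsilon M_5$, $\Omega=\epsilon M_6$ for explicit functions $M_1,\dots,M_6$, and then checks the $z=1$ cancellation via Taylor expansion just as you describe.

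However, your region-splitting step contains a genuine gap. The norm $\|\cdot\|_{N_{y_0}}$ weights $P$ and $\Omega$ \emph{asymmetrically}: on $[y_0,1]$ the weights are $z$ and $z^3$ respectively, and on $[1,\infty)$ they are $z$ and $z^2$. Your blanket assertion that ``quadratic combinations entering $Q_i$ are pointwise $O(z^{-3})$'' on $[y_0,1]$ is not enough to control $|zP|$: that bound would yield only $|zP|=O(\epsilon z^{-2})=O(\epsilon y_0^{-2})$, losing two full powers of $y_0$. What actually makes the estimate work is that $P$ and $\Omega$ separately inherit different structural decay from $A_1$ (which carries factors of $(z-\beta)$) versus $A_2$ (which carries factors $(z-\beta)^{-1}$ and $(z-\beta)^{-2}$). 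The paper extracts this by writing out $M_5,M_6$ explicitly and checking term by term that $M_5=O(z^{-1})$, $M_6=O(z^{-3})$ on $[y_0,1]$ and $M_5=O(z^{-1})$, $M_6=O(z^{-2})$ on $[1,\infty)$. Your abstraction to ``polynomial of degree $\le 2$ with rational-in-$z$ coefficients'' discards exactly the information needed to see this asymmetry. (As a side note, $A_2$ contains the factor $\frac{1}{1+\frac{\epsilon}{y_*\alpha}(\hat p-1)}$, so the dependence on $\hat p$ is rational rather than polynomial; you handle the denominator via mass-positivity, but the ``polynomial'' framing is not quite right.)
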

For the proof, see Subsection \ref{nonlinearlemmaproof}.

 \begin{lem}
\begin{enumerate}[wide,label=(\alph*)]
\item
There exists a universal constant $C_3$ so that if one fixes $y_0\leq 1$, then for $\abs{\epsilon}\ll y_0^{1/2}$, there exists a unique solution $(p[\epsilon](y),\w[\epsilon](y))\in X_{y_0}$ to \eqref{integraleq} with (all inequalities independent of $y_0,\epsilon$)
\[
\norm{(p[\epsilon],\w[\epsilon])}_{X_{y_0}}\leq C_3 \abs{\epsilon} y_0^{-1/2}.
\]
\item
Moreover, for this range of $\epsilon$, we have that $\epsilon\mapsto (p[\epsilon],\w[\epsilon])$ is $C^1$ valued in $X_{y_0}$. In particular, we may define $p_{lin}[\epsilon],\omega_{lin}[\epsilon]$ to be the unique solution to
\[
(p_{lin},\omega_{lin})=R\pdv{N}{\epsilon}|_{(\epsilon,p[\epsilon],\omega[\epsilon])}+R(\pdv{N}{p},\pdv{N}{\omega})|_{(\epsilon,p[\epsilon],\omega[\epsilon])}\cdot(p_{lin},\omega_{lin})
\]
 and
 \[
\pdv{p[\epsilon]}{\epsilon},\pdv{\omega[\epsilon]}{\epsilon}=(p_{lin}[\epsilon],\omega_{lin}[\epsilon]).
 \]
In addition, we have
\[
(p[0],\omega[0])=0,\qquad \norm{\pdv{p[\epsilon]}{\epsilon},\pdv{\omega[\epsilon]}{\epsilon}}_{X_{y_0}}\lesssim y_0^{-1/2}.
\]

\end{enumerate}
\label{extconstructionlemmapre}
\end{lem}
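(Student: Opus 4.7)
The plan is to deduce part (a) via a Banach fixed point argument on the integral equation \eqref{integraleq}, and part (b) via the implicit function theorem, with Lemmas \ref{linearlemma} and \ref{nonlinearlemma} providing all of the needed analytic estimates.

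For part (a), fix a constant $C$ (say $C=1$) and let $C_0(1), C_2(1)$ be the associated outputs of Lemma \ref{nonlinearlemma}. Set $C_3 := 2 C_1 C_2(1)$, and for each $\epsilon$ small enough that $C_3 |\epsilon| y_0^{-1/2} \leq 1$ and $|\epsilon| \leq C_0(1) y_0^{1/2}$ (to ensure mass-positivity and that we are in the range of Lemma \ref{nonlinearlemma}), consider the closed ball
\[
B_\epsilon := \{(p,\omega) \in X_{y_0} : \norm{(p,\omega)}_{X_{y_0}} \leq C_3 |\epsilon| y_0^{-1/2}\}.
\]
The map $T_\epsilon(p,\omega) := R N(\epsilon, p, \omega)$ is well-defined on $B_\epsilon$, and chaining Lemma \ref{nonlinearlemma}.a with Lemma \ref{linearlemma} gives $\norm{T_\epsilon(p,\omega)}_{X_{y_0}} \leq C_1 C_2 |\epsilon| y_0^{-1/2} = \tfrac{1}{2} C_3 |\epsilon| y_0^{-1/2}$, so $T_\epsilon$ stabilizes $B_\epsilon$. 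Using the mean value inequality together with the derivative bound \eqref{derivativebddnesseq} from Lemma \ref{nonlinearlemma}.b yields
\[
\norm{T_\epsilon(p_1,\omega_1) - T_\epsilon(p_0,\omega_0)}_{X_{y_0}} \leq C_1 C_2 |\epsilon| y_0^{-1/2} \norm{(p_1,\omega_1) - (p_0,\omega_0)}_{X_{y_0}},
\]
which is a strict contraction once $|\epsilon| y_0^{-1/2}$ is small enough. The Banach fixed point theorem then produces the unique $(p[\epsilon],\omega[\epsilon]) \in B_\epsilon$ solving \eqref{integraleq}, satisfying the claimed bound.

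For part (b), I would consider the map $F \colon (\epsilon,(p,\omega)) \mapsto (p,\omega) - R N(\epsilon, p, \omega)$ from a neighborhood of $(0,(0,0))$ in $\R \times X_{y_0}$ into $X_{y_0}$. Lemma \ref{linearlemma} makes $R$ a bounded linear map $N_{y_0} \to X_{y_0}$, and Lemma \ref{nonlinearlemma}.b makes $N$ a $C^1$ map, so $F$ is $C^1$ on the domain $\calD_1$. Its partial Fréchet derivative in $(p,\omega)$ at the fixed point equals $I - R \circ (\partial_p N, \partial_\omega N)$, which is invertible because
\[
\norm{R \circ (\partial_p N, \partial_\omega N)}_{L(X_{y_0})} \leq C_1 C_2 |\epsilon| y_0^{-1/2} \leq \tfrac{1}{2}
\]
and its Neumann series converges. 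The implicit function theorem then provides $C^1$ dependence of $(p[\epsilon],\omega[\epsilon])$ on $\epsilon$, and differentiating $F(\epsilon,(p[\epsilon],\omega[\epsilon])) = 0$ reproduces the linearized integral equation for $(p_{lin},\omega_{lin})$ stated in the lemma. Inverting via the Neumann series together with the bound $\norm{R \partial_\epsilon N}_{X_{y_0}} \leq C_1 C_2 y_0^{-1/2}$ from Lemmas \ref{linearlemma} and \ref{nonlinearlemma}.b yields $\norm{(p_{lin}[\epsilon],\omega_{lin}[\epsilon])}_{X_{y_0}} \lesssim y_0^{-1/2}$. The identity $(p[0],\omega[0]) = 0$ is immediate from inspection of the definition of $\hat{N}$: the expressions for $A_1,A_2$ vanish identically when $\epsilon=0$, so $N(0,0,0)=0$ and $(0,0)$ is the unique fixed point of $T_0$ in $B_0 = \{0\}$.

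The only real friction is bookkeeping the smallness condition on $\epsilon$: a single threshold $|\epsilon| \leq c y_0^{1/2}$ must simultaneously ensure mass-positivity, keep $B_\epsilon$ inside the region where Lemma \ref{nonlinearlemma} applies with $C = 1$, force the contraction constant strictly below $1$, and make the Neumann series for the implicit function theorem converge. Shrinking $c$ a finite number of times handles all four requirements, so beyond chaining the two preceding lemmas there is no deeper obstacle.
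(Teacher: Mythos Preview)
Your argument is correct. Part (a) is essentially identical to the paper's proof: both run a contraction on the map $RN(\epsilon,\cdot)$ using the bounds from Lemmas \ref{linearlemma} and \ref{nonlinearlemma}, with the paper writing out the Picard iterates explicitly and you invoking the Banach fixed point theorem on the ball $B_\epsilon$.

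For part (b) the two arguments diverge. The paper does everything by hand: it first proves the Lipschitz bound $\norm{(p[\epsilon_1],\omega[\epsilon_1])-(p[\epsilon_0],\omega[\epsilon_0])}_{X_{y_0}}\lesssim y_0^{-1/2}|\epsilon_1-\epsilon_0|$, then solves the linearized fixed-point equation \eqref{linearrecursion} for $(p_{lin},\omega_{lin})$ by another contraction, then directly estimates the second-order remainder $(p_{diff},\omega_{diff})$ using \eqref{derivativelipschitz1}--\eqref{derivativelipschitz2}, and finally verifies continuity of $(p_{lin},\omega_{lin})$ in $\epsilon$ by a further difference estimate. Your route via the implicit function theorem packages all four of these steps into a single invocation, with the Neumann series for $(I-R\circ(\partial_p N,\partial_\omega N))^{-1}$ replacing the paper's separate contractions. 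This is cleaner, though note that the standard IFT is local: to obtain $C^1$ dependence on the full interval $|\epsilon|\ll y_0^{1/2}$ you implicitly rely on applying it at each $(\epsilon_0,(p[\epsilon_0],\omega[\epsilon_0]))$ and patching via the uniqueness from (a), or on a quantitative IFT that tracks the size of the neighborhood. The paper's explicit estimates make the uniformity in $y_0$ and $\epsilon$ more visible, which matters downstream when the constants feed into the matching argument; your approach recovers the same uniform bounds because the Neumann series is controlled by the same quantity $C_1C_2|\epsilon|y_0^{-1/2}\leq\tfrac12$.
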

For the proof, see Subsection \ref{extconstructionlemmapreproof}.

In preparation for the matching, we convert this statement to the original variables. In particular, we define for each $\epsilon$,
\begin{align*}
\rho_{ext}[\epsilon](y)&:=\frac{1}{\epsilon y^2}\left(y_*(1+\frac{\epsilon}{y_*\alpha}(p_{hom}(\alpha y+\beta)+p[\epsilon](\alpha y+\beta)-1))-1-\epsilon p_{hom}(y)\right),\\
&=\frac{1}{y^2}\left((1-\frac{1}{\alpha})+\left(\frac{1}{\alpha}p_{hom}(\alpha y+\beta)-p_{hom}(y)\right)+\frac{1}{\alpha}p[\epsilon](\alpha y+\beta)\right),\\
u_{ext}[\epsilon](y)&=\frac{y}{\epsilon}\left(\frac{1}{y_*}(1+\epsilon(\omega_{hom}+\omega+1))-1-\epsilon \omega_{hom}\right),\\
&=y\left(\left(\frac{1}{y_*}\omega_{hom}(\alpha y+\beta)-\omega_{hom}(y)\right)+\frac{1}{y_*}\omega[\epsilon](\alpha y+\beta)\right).
\end{align*}

We can then compute
\begin{align*}
\rho_{ext}[\epsilon](y_0)&=\frac{1}{y_0^2}\left(1-\frac{1}{\alpha}+(\frac{1}{\alpha}-1)p_{hom}(y_0)+\frac{1}{\alpha}p[\epsilon](y_0)\right),\\
u_{ext}[\epsilon](y_0)&=y_0\left((\frac{1}{y_*}-1)\omega_{hom}(y_0)+\frac{1}{y_*}\omega[\epsilon](y_0)\right).
\end{align*}
In particular, we see that if $p[\epsilon](y_0),\omega[\epsilon](y_0)$ are $C^1$ then so are $\rho[\epsilon](y_0)$ and $u[\epsilon](y_0)$. Moreover, as $\norm{(y^{-2},0)}_{X_{y_0}},\norm{(y^{-2}p_{hom}',y\omega_{hom}')}_{X_{y_0}}<\infty$, we have that 
\[
\norm{y^2\rho_{ext},y^{-1}u_{ext}}_{X_{y_0}}\lesssim \epsilon+\norm{(p[\epsilon],\omega[\epsilon])}_{X_{y_0}}.
\]
Using these variables leads to the following corollary of \ref{extconstructionlemmapre}.
\begin{lem} For a fixed $y_0\leq 1$, $\epsilon \ll y_0^{1/2}$, there exists functions $\tilde{\rho}$, $\tilde{u}$ which are $C^1$ on $[y_0,\infty)$ with $\tilde{\rho}>0$ satisfying \eqref{selfsimeq2} with a Hunter-type sonic point at $y_*=1+\epsilon$ of the form
\[
\begin{cases}
\tilde{\rho}_{ext}(y)=\frac{1}{y^2}+\frac{\epsilon}{y^2}p_{hom}(y)+\epsilon \rho_{ext}[\epsilon](y)\\
\tilde{u}_{ext}(y)=\epsilon y \omega_{hom}(y)+\epsilon u_{ext}[\epsilon](y)
\end{cases},
\]
\label{extconstructionlemma}
where $p_{hom},\w_{hom}$ are defined in Lemma \ref{explicitlemma}. We have the following bounds (independent of $y_0,\epsilon$):
\begin{equation}
\sup_{y_0\leq y\leq 1}\abs{y^{5/2}\rho_{ext}},\abs{y^{-1/2} u_{ext}},\abs{y^{7/2}\rho_{ext}'},\abs{y^{1/2}u_{ext}'}\lesssim \epsilon y_0^{-1/2}\qquad \text{and}
\label{exterror}
\end{equation}
\begin{equation}
\sup_{1\leq y\leq \infty}\abs{y^2\rho_{ext}},\abs{u_{ext}},\abs{y^3\rho_{ext}'},\abs{yu_{ext}'}\lesssim \epsilon y_0^{-1/2}.
\label{exterrorb}
\end{equation}
 Moreover, we have that $\epsilon\mapsto (\epsilon \rho_{ext}[\epsilon],\epsilon u_{ext}[\epsilon])(y_0)$ is $C^1$ on a neighborhood of $0$. In addition, as $\epsilon\to 0$ we have (inequality independent of $y_0$)
\begin{equation}
\epsilon \rho[\epsilon](y_0)=O(\epsilon^2 y_0^{-3}),\qquad \epsilon u[\epsilon]=O(\epsilon^2).
\label{epsiloncontrol}
\end{equation}
Finally, we have the following bound on the velocity (independent of $y_0,\epsilon$)
\begin{equation}
(\tilde{u}+y)'\geq \frac{1}{2}.
\label{exthuntercontrol}
\end{equation}
\end{lem}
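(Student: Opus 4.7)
The strategy is to unwind the $y_*$-dependent change of variables leading from \eqref{selfsimeq2} to \eqref{prediffeq} and translate Lemma~\ref{extconstructionlemmapre} back to $(\tilde{\rho},\tilde{u})$. Setting $\hat{p} := p_{hom} + p[\epsilon]$, $\hat{\omega} := \omega_{hom} + \omega[\epsilon]$ and composing with $\tilde{\rho} = \tilde{p}/y^2$, $\tilde{u} = y(\tilde{\omega}-1)$, $\tilde{p}(y) = y_*(1 + \tfrac{\epsilon}{y_*\alpha}(\hat{p}(\alpha y + \beta) - 1))$ and the analogous formula for $\tilde{\omega}$, direct algebra produces the stated decomposition, with $\rho_{ext}[\epsilon], u_{ext}[\epsilon]$ given by the formulas displayed immediately before the lemma statement. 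By construction the pair $(\tilde{\rho}_{ext},\tilde{u}_{ext})$ solves \eqref{selfsimeq2} with a Hunter-type sonic point at $y_* = 1 + \epsilon$, and $\tilde{\rho}_{ext} > 0$ follows from the mass-positivity condition built into the iteration.

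The weighted bounds \eqref{exterror}--\eqref{exterrorb} will follow by combining three inputs: the asymptotics of $p_{hom}, \omega_{hom}$ and their derivatives at $0$ and $\infty$ from Lemma~\ref{explicitlemma}(c)--(d); the estimate $\|(p[\epsilon],\omega[\epsilon])\|_{X_{y_0}} \lesssim \epsilon y_0^{-1/2}$ from Lemma~\ref{extconstructionlemmapre}(a); and the bounds $\alpha = 1 + O(\epsilon)$, $\beta = O(\epsilon y_0)$ which control the distortion from the change of variables. With these in hand, the terms $(1 - \tfrac{1}{\alpha})$ and $\tfrac{1}{\alpha}p_{hom}(\alpha y + \beta) - p_{hom}(y)$ appearing in $\rho_{ext}$ are each $O(\epsilon)$-small in the weighted norms (the second by a mean-value estimate using the derivative asymptotics of $p_{hom}$), so the remainder is routine case-checking on $[y_0,1]$ and $[1,\infty)$.

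The $C^1$ dependence of $\epsilon \mapsto (\epsilon\rho_{ext},\epsilon u_{ext})(y_0)$ is inherited from Lemma~\ref{extconstructionlemmapre}(b) combined with continuity of pointwise evaluation on $X_{y_0}$. For the improvement \eqref{epsiloncontrol}, the key identity is $\alpha y_0 + \beta = y_0$, immediate from the definitions of $\alpha,\beta$, which collapses the shift at the left boundary and gives
\[
\rho_{ext}[\epsilon](y_0) = \tfrac{1}{y_0^2}\left((1 - \tfrac{1}{\alpha})(1 - p_{hom}(y_0)) + \tfrac{1}{\alpha}p[\epsilon](y_0)\right),
\]
and a parallel formula for $u_{ext}[\epsilon](y_0)$. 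Using $1 - 1/\alpha = O(\epsilon)$, $p_{hom}(y_0) = O(y_0^{-1/2})$ from \eqref{0asymptot}, and $|p[\epsilon](y_0)| \leq y_0^{-1/2}\|(p[\epsilon],\omega[\epsilon])\|_{X_{y_0}} = O(\epsilon y_0^{-1})$ yields $\epsilon\rho_{ext}[\epsilon](y_0) = O(\epsilon^2 y_0^{-3})$; the $y_0$-prefactor in the corresponding formula for $u_{ext}[\epsilon](y_0)$ exactly compensates for the $y_0^{-1}$-growth of $\omega[\epsilon](y_0)$, leaving $\epsilon u_{ext}[\epsilon](y_0) = O(\epsilon^2)$.

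Finally, the velocity bound \eqref{exthuntercontrol} reduces to showing that the two correction terms in $(\tilde{u}_{ext}+y)' = 1 + \epsilon(\omega_{hom}(y) + y\omega_{hom}'(y)) + \epsilon u_{ext}'(y)$ are together at most $\tfrac12$ in absolute value. The $\epsilon u_{ext}'$ contribution is $O(\epsilon^2 y_0^{-1})$ by \eqref{exterror}--\eqref{exterrorb}, while the $\omega_{hom}$ piece is $O(\epsilon y_0^{-1/2})$ on $[y_0,1]$ (using $\omega_{hom}, y\omega_{hom}' = O(y^{-1/2})$ from \eqref{0asymptot}) and $O(\epsilon)$ on $[1,\infty)$ (from \eqref{infinityasymptot}). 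Both are $\ll 1$ under $\epsilon \ll y_0^{1/2}$, finishing the argument. The main bookkeeping obstacle throughout is tracking how $\epsilon$-gains beat the negative powers of $y_0$ created by the sonic shift, and the identity $\alpha y_0 + \beta = y_0$ is the single algebraic fact that makes this work cleanly at the matching boundary.
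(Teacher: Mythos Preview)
Your proposal is correct and follows essentially the same approach as the paper: unwinding the change of variables from Lemma~\ref{extconstructionlemmapre}, using the explicit formulas for $\rho_{ext}[\epsilon](y_0)$ and $u_{ext}[\epsilon](y_0)$ (which the paper displays just before the lemma statement, relying on precisely the identity $\alpha y_0+\beta=y_0$ you single out), and then bounding $\tilde u'$ via the asymptotics of $\omega_{hom}$ together with \eqref{exterror}--\eqref{exterrorb}. Your write-up is in fact more explicit than the paper's, which compresses everything up through \eqref{epsiloncontrol} into a single sentence.
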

\begin{proof}
We start with Lemma \ref{extconstructionlemmapre} and implement the change of variables. This yields proves the lemma up to and including \eqref{epsiloncontrol}. To prove \eqref{exthuntercontrol} one computes
\[
\tilde{u}'=\epsilon(yw_{hom}'+w_{hom}+u').
\]
We then use the asymptotics for $\omega_{hom}$ given in Lemma \ref{explicitlemma} and \eqref{exterror}, \eqref{exterrorb} for $u'$ to derive the following bounds on $[y_0,\infty)$:
\[
w_{hom}\lesssim \frac{1}{y}\ev{\frac{1}{y}}^{-1/2},\qquad y w_{hom}'\lesssim \frac{1}{y}\ev{\frac{1}{y}}^{-1/2},\qquad u'\lesssim (\epsilon y_0^{-1/2})\frac{1}{y}\ev{\frac{1}{y}}^{-1/2}.
\]
If we then use $y\geq y_0$, we get the global bound $\tilde{u}'\lesssim \epsilon y_0^{-1/2}$. As we assuming $\epsilon y_0^{-1/2}\ll 1$, by shrinking the range of admissible $\epsilon$ if necessary, we obtain $\abs{\tilde{u}'}\leq 1/2$, yielding \eqref{exthuntercontrol}.
\end{proof}

\section{Solving the self-similar equations on $[0,y_0]$}
\label{intsection}
\subsection{Choice of variables and review of the isothermal sphere equation}
In the interior region, we will change variables from $(\tilde{\rho},\tilde{u})$ to $(\tilde{w},\tilde{u})$, where $\tilde{w}$ is defined by the relation
\[
\tilde{\rho}=e^{\tilde{w}}.
\]
In this region, we will start with the formulation \eqref{selfsimeq} and take a divergence of the second equation, which looses no generality under the assumption that $\tilde{\rho},\tilde{u}$ are $C^2$ on $[0,y_0]$. When we do this, the system becomes
\begin{equation}
\begin{cases}
(2+y\partial_y)(e^{\tilde{w}})+\div(e^{\tilde{w}}\tilde{u})=0\\
\div((y+\tilde{u})\partial_{y}\tilde{u})+\Delta \tilde{w}+2e^{\tilde{w}}=0\\
\tilde{u}(0)=0,\qquad \tilde{u}'(0)=-\frac{2}{3}
\end{cases}.
\label{originselfsimeq}
\end{equation}
The motivation for this formulation of the equation is that it highlights the connection with the isothermal sphere equation, \eqref{stareq}. We next review properties of this equation. We switch the radial coordinate from ``$r$'' to ``$y$'' to maintain consistency.

As mentioned in the introduction there exists a ground state solution $Q$, uniquely determined within the scaling symmetry class by requiring
\begin{equation}
Q(0)=0.
\label{Qdef}
\end{equation}
 The asymptotics of this ground state as $y\to \infty$ are
\begin{equation}
Q=-2\log(y)+O(y^{-1/2}).
\label{Qasymexp}
\end{equation}
Converting to the mass-density variable gives
\begin{equation}
e^Q=\frac{1}{y^2}+O(\frac{1}{y^{5/2}}).
\label{exponentialasymptot}
\end{equation}
Moreover, from the equation \eqref{stareq}, it follows that the asymptotic relations \eqref{Qasymexp} and \eqref{exponentialasymptot} can be differentiated arbitrarily many times. We also note that the linearized operator associated to \eqref{stareq} about $Q_{f}=-2\log(y)$, 
\[
\tilde{H}:=-(\Delta+\frac{2}{y^2}),
\]
has a two-dimensional kernel spanned by the real and imaginary parts of $y^{-\frac{1}{2}+i\fr}$.

We now discuss homogeneous solutions for the linearization about $Q$ of the isothermal sphere equation. We define an operator
\[
H:=-(\Delta+2e^Q).
\]
From the scaling symmetry, we see that one element of the kernel is 
\[
v_1:=y\partial_y Q+2.
\]
If one takes any solution $v$ to $Hv=0$, and defines the Wronskian with $v_1$ to be $W=v_1v'-v v_1'$, then differentiating the Wronskian identity yields $W=Cy^{-2}$. By choosing $C=1$ and solving the ODE $v_1v'-v v_1'=W$ for $v$, we can pick another homogeneous solution, linearly independent from $v_1$, to be (here $Y_0$ is an arbitrary constant)
\[
v_2:=-v_1(\int_{y}^{Y_0}\frac{1}{(v_1)^2(y')^2} dy').
\]
We note that $v_1,v_2=O(\frac{1}{y^{1/2}})$ as $y\to\infty$. In fact, one can obtain the following more detailed asymptotics of $v_1,v_2$.
\begin{lem} As $y\to\infty$, the functions $v_1,v_2$  satisfy, for some $c_3,c_4\in \R^*$ and $d_3,d_4\in \R/2\pi \Z$,
\begin{equation}
(v_1,v_2)=\left(c_3\frac{\sin(\fr\log(y)+d_3)}{y^{1/2}},c_4\frac{\sin(\fr\log(y)+d_4)}{y^{1/2}} \right)+O(\frac{1}{y}),
\label{kernelasymptoticseq}
\end{equation}
and one can differentiate these asymptotic relations arbitrarily many times.
\label{kernelasymptotics}
\end{lem}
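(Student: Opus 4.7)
The plan is to treat $Hv=0$ as a rapidly-decaying perturbation of the Euler-type equation $\tilde{H}w=0$, whose fundamental solutions are explicit, and to extract the asymptotics of $v_1,v_2$ via variation of parameters in a weighted space. First I would solve $\tilde{H}w=0$ directly: the radial form $w''+(2/y)w'+(2/y^2)w=0$ admits the ansatz $w=y^\alpha$, yielding $\alpha^2+\alpha+2=0$ and $\alpha=-\tfrac{1}{2}\pm i\fr$. A real fundamental pair is therefore
\[
w_1(y)=y^{-1/2}\cos\bigl(\fr\log y\bigr),\qquad w_2(y)=y^{-1/2}\sin\bigl(\fr\log y\bigr),
\]
with Wronskian $W_0=w_1 w_2'-w_2 w_1'=\fr\,y^{-2}$. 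Since $e^Q-y^{-2}=O(y^{-5/2})$ by \eqref{exponentialasymptot}, any solution $v$ of $Hv=0$ satisfies $\tilde{H}v=2(e^Q-y^{-2})v$, and variation of parameters writes $v=A(y)w_1(y)+B(y)w_2(y)$ with
\[
A(y)=A_\infty-\int_y^\infty\frac{2(e^{Q(s)}-s^{-2})w_2(s)}{W_0(s)}v(s)\,ds,\qquad B(y)=B_\infty+\int_y^\infty\frac{2(e^{Q(s)}-s^{-2})w_1(s)}{W_0(s)}v(s)\,ds.
\]
Substituting the size of $W_0$, each integrand is $O(s^{-1})|v(s)|$, so for $v=O(s^{-1/2})$ they decay as $O(s^{-3/2})$ and the integrals are absolutely convergent and of size $O(y^{-1/2})$.

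Second, I would make this rigorous via a contraction on $[Y,\infty)$ in the norm $\|v\|:=\sup_{y\geq Y}y^{1/2}|v(y)|$. The integral operator has operator norm $O(Y^{-1/2})$ in this norm, so for $Y$ large and each prescribed pair $(A_\infty,B_\infty)\in\R^2$ the integral equation has a unique fixed point. Differentiating recovers a solution of $Hv=0$, and the 2-dimensional nature of the ODE solution space upgrades the map $v\mapsto (A_\infty,B_\infty)$ to a linear isomorphism. Combining with the $O(y^{-1/2})$ bound on the integral remainders and $|w_i|\lesssim y^{-1/2}$, any solution obeys
\[
v(y)=A_\infty w_1(y)+B_\infty w_2(y)+O(y^{-1}),
\]
which rewrites as $c\,y^{-1/2}\sin(\fr\log y+d)+O(y^{-1})$ with $c=\sqrt{A_\infty^2+B_\infty^2}$ and an appropriate phase $d$. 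Since $v_1=yQ'+2\not\equiv 0$ and $v_1=O(y^{-1/2})$ (by \eqref{Qasymexp}), the isomorphism yields $(A_\infty,B_\infty)\neq 0$, hence $c_3\neq 0$; the same argument gives $c_4\neq 0$ since $v_2$ is nonzero by construction.

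Third, for the differentiability claim I would use the variation-of-parameters identity $v'(y)=A(y)w_1'(y)+B(y)w_2'(y)$ (the $A'w_1+B'w_2$ term vanishes by the ansatz). As $y\to\infty$, $A(y)-A_\infty=O(y^{-1/2})$ and $w_i'=O(y^{-3/2})$, so
\[
v'(y)=A_\infty w_1'(y)+B_\infty w_2'(y)+O(y^{-2}),
\]
which matches the formal derivative of the leading asymptotic. Higher derivatives follow by induction using the ODE $v''=-(2/y)v'-2e^Q v$, the already-established asymptotics for $v,v'$, and the fact that \eqref{exponentialasymptot} is itself differentiable to all orders. The main obstacle I expect is running the contraction with enough uniform control to simultaneously establish existence of the asymptotic data $(A_\infty,B_\infty)$, the linear isomorphism, and the $O(y^{-1})$ remainder: the $O(y^{-5/2})$ decay of $e^Q-y^{-2}$ is exactly one power better than the $y^{-1/2}$ size of a typical solution, which is just enough margin to close the estimates.
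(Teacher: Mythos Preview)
Your proposal is correct and takes essentially the same approach as the paper: rewrite $Hv=0$ as $\tilde{H}v=2(e^Q-y^{-2})v$, use the explicit fundamental pair $y^{-1/2}\cos(\fr\log y),\,y^{-1/2}\sin(\fr\log y)$ for $\tilde{H}$, and read off the leading asymptotics plus an $O(y^{-1})$ remainder via variation of parameters. The only minor difference is in showing $c_3,c_4\neq 0$: the paper does this in one line by computing the Wronskian of $v_1,v_2$ from the asymptotics and matching it to the known value $W=y^{-2}$, whereas you route through the isomorphism between solutions and asymptotic data---both arguments are valid.
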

This lemma is proved in Subsection \ref{kernelasymptoticsproof}.

As $y\partial_y Q+2=v_1$, by \eqref{Qasymexp} we can express
\[
Q(y)=-2\log(y)-\int_{y}^{\infty}\frac{v_1}{y'}dy'.
\]
This allows us to refine the error in \eqref{Qasymexp} to, for some $c_2>0$ and $d_2\in \R/2\pi \Z$,
\[
Q=-2\log(y)+\frac{c_2\sin(\fr \log(y)+d_2)}{y^{1/2}}+O(\frac{1}{y}).
\]
Taking an exponential yields
\begin{equation}
e^Q=\frac{1}{y^2}+\frac{c_2\sin(\fr \log(y)+d_2)}{y^{5/2}}+O(\frac{1}{y^3}),
\label{Qrefinedasymptot}
\end{equation}
and we can differentiate these asymptotic relations.

\subsection{Derivation of the linearized operators}
The goal in this section will be to view \eqref{originselfsimeq} as a perturbation of the isothermal sphere equation, \eqref{stareq}. In particular, we assume $\tilde{w}(0)=-2\log(\lambda)$ and scale by defining
\[
\tilde{w}=\bar{w}(\cdot/\lambda)-2\log(\lambda),\qquad \tilde{u}=\lambda \bar{u}(\cdot/\lambda).
\]
Substituting these relations into \eqref{originselfsimeq} yields
\begin{equation}
\begin{cases}
(2+y\partial_y)e^{\bar{w}}+\div(e^{\bar{w}}\bar{u})=0\\
\Delta \bar{w}+2e^{\bar{w}}=-\lambda^2 \div((y+\bar{u})\partial_y \bar{u})\\
\bar{w}(0)=0,\qquad \bar{u}(0)=0,\qquad \bar{u}'(0)=-\frac{2}{3}
\end{cases}.
\label{scaledselfsimeq}
\end{equation}
We now consider $\lambda$ to be a small parameter, and formally do an asymptotic expansion, $\bar{w}=\sum_{n=0}^{\infty}(\lambda^2)^j w_j$, $\bar{u}=\sum_{n=0}^{\infty}(\lambda^2)^j u_j$. The zeroth order equation in $\lambda$ is
\[
\begin{cases}
(2+y\partial_y)e^{w_0}+\div(e^{w_0}u_0)=0\\
\Delta w_0+2e^{w_0}=0\\
w_0(0)=0,\qquad u_0(0)=0,\qquad u_0'(0)=-\frac{2}{3}.
\end{cases},
\]
We note that the second equation is exactly \eqref{stareq}, and so we can put $w_0=Q$. With this being known, one can write the first equation as
\begin{equation}
\begin{cases}
(2+y\partial_y)e^Q+J u_0=0\\
u_0(0)=0,\qquad u_0'(0)=-\frac{2}{3}
\end{cases}
\end{equation}
where
\[
J u:=\div(e^Q u).
\]
We recall for further use that that the ODE
\[
\begin{cases}
J u+f=0\\
u(0)=0,\qquad u'(0)=-\frac{f(0)}{3}
\end{cases}
\]
is solved by $u=T(f)$, where
\begin{equation}
T(f):=-\frac{1}{y^2 e^Q}\int_0^y (y')^2 f dy'.
\label{Tdef}
\end{equation}
In particular, in this case the solution is
\begin{equation}
u_0=u_*:=T((2+y\partial_y)e^Q).
\label{ustardef}
\end{equation}
By the asymptotics of $e^Q$, we see that as $y\to \infty$ $u_*$ has asymptotics
\begin{equation}
u_*=c_2\sin(\fr \log(y)+d_2+\theta_0)y^{1/2}+O(1)
\label{ustarasymptot}
\end{equation}
and we can differentiate the asymptotics (the remainder decreasing in order by $1$ each time).

To motivate definitions for the iteration scheme, we expand \eqref{scaledselfsimeq} to order $\lambda^2$ obtaining the following inhomogeneous linear ode for $w_1,u_1$:
\begin{equation}
\begin{cases}
Lw_1+\div(e^Q u_1)=0\\
-Hw_1+\div((y+u_*)\partial_y u_*)=0\\
w_1(0)=u_1(0)=u_1'(0)=0
\end{cases}.
\label{nearlinearization}
\end{equation}
where
\[
Lw:=(2+y\partial_y)(e^{Q}w)+\div(e^Q u_* w).
\]
We observe that this system admits a hierarchical structure. To solve the second equation for $w_1$, we may perform variation of constants using the homogeneous solutions $v_1,v_2$ studied above. In particular, we define a solution operator satisfying $H\circ S=\id$ by
\begin{equation}
S(f)(y):=\left(\int_0^y f(y') v_2(y')\cdot (y')^2dy' \right)v_1(y)-\left(\int_0^y f(y') v_1(y')\cdot (y')^2 dy'\right)v_2(y).
\label{Sdef}
\end{equation}
For $f$ smooth up to the origin the formula gives the unique solution to $Hw=f$ which is smooth at the origin and satisfies $w(0)=0$. Once $w_1$ has been found, we can the solve the first equation for $u_1$ with the solution operator $T$. 

When we use the heuristic that applying $S$ multiplies asymptotic expansions by $y^2$, we see that as $y\to\infty$,
\begin{equation}
w_1=O(y^{3/2}).
\label{heuristic1}
\end{equation}
If we then solve for $u_1$, using the heuristic that $T$ multiplies asymptotic expansions by $y^3$, we see that as $y\to \infty$,
\begin{equation}
u_1=O(y^{5/2}).
\label{heuristic2}
\end{equation}
In the next section we study the full difference equation. These last two heuristic growth rates help motivate the weights for the function spaces.

\subsection{Difference equations and iteration scheme}
We now discuss the full construction. We begin by establishing the notation
\begin{equation}
Q_{\lambda}=Q(\cdot/\lambda)-2\log(\lambda),\qquad u_{\lambda}=\lambda u_*(\cdot/\lambda),\qquad v_{i,\lambda}=v_i(\cdot/\lambda).
\label{scaledvarsdef}
\end{equation}
We consider a fixed $\lambda_0\ll y_0$, and show existence of solutions with $\tilde{\rho}(0)=\frac{1}{\lambda^2}$ for $\lambda$ close to $\lambda_0$. To do this, we introduce variables $w,u$ defined implicitly by
\[
\tilde{w}=Q_{\lambda}+\lambda_0^2 w(\cdot/\lambda_0),\qquad \tilde{u}=u_{\lambda}+\lambda_0^3 u(\cdot/\lambda_0).
\]
We adopt the notations
\[
y_1=y_0/\lambda_0,\qquad r=\lambda/\lambda_0
\]
and note the scaling identities
\[
Q_{\lambda}=Q_{r}(\cdot/\lambda_0)-2\log(\lambda_0),\qquad u_{\lambda}=\lambda_0 u_{r}(\cdot/\lambda_0).
\]
We then compute that \eqref{originselfsimeq}, with $w(0)=-2\log r\lambda_0$, on $[0,y_0]$ is equivalent to the following ODE system on $[0,y_1]$,
\begin{equation}
\begin{cases}
(2+y\p_y)(e^{Q_{r}+\lambda_0^2 w})+\div(e^{Q_{r}+\lambda_0^2 w}(u_{r}+\lambda_0^2 u))=0\\
\lambda_0^2\div((y+u_{r}+\lambda_0^2 u)\p_y(u_{r}+\lambda_0^2 u))+\Delta(Q_{r}+\lambda_0^2 w)+2e^{Q_{r}+\lambda_0^2 w}=0\\
w(0)=u(0)=u'(0)=0
\end{cases}.
\label{scaledoriginselfsimeq}
\end{equation}
We next define scaled versions of operators defined in the previous subsection.
\begin{align*}
H_{r}w:=&-r^2(\Delta+2e^{Q_r})w,\\
S_rf:=&\frac{1}{r^3}\left(\left(\int_0^y f v_{2,r}(y')^2 dy'\right)v_{1,r}-\left(\int_0^y f v_{1,r}(y')^2 dy'\right)v_{2,r}\right),\\
J_r u:=&r^2\div(e^{Q_r}u),\\
T_rf :=&-\frac{1}{r^2 y^2e^{Q_r}}\int_0^y f (y')^2 dy'.\\
\end{align*}
We note that $H_rS_r=\id$, $-J_r T_r=\id$, $J u_r+r^2 (2+y\p_y)e^{Q_r}=0$. We also define
\[
L_rw :=r^2\left((2+y\p_y)(e^{Q_r} w)+\div(u_r e^{Q_r} w)\right),\qquad K_r:=T_r\circ L_r\circ S_r.
\]
We then use Taylor's theorem, in particular
\begin{align*}
e^{Q_{r}+\lambda_0^2 w}&=e^{Q_{r}}+\lambda_0^2 w\int_0^1 e^{Q_{r}+t\lambda_0^2 w}dt\\
&=e^{Q_{r}}+\lambda_0^2 we^{Q_{r}}+\lambda_0^4w^2\int_0^1(1-t)e^{Q_{r}+t\lambda_0^2 w}dt,
\end{align*}
to write \eqref{scaledoriginselfsimeq} as the difference equations
\begin{equation}
\begin{cases}
0=L_r w+J_r u+F_r(w,u)\\
0=-H_rw+G_r(w,u)\\
w(0)=u(0)=u'(0)=0
\end{cases},
\label{diffeq}
\end{equation}
where we define nonlinear functions
\begin{align*}
F_r(w,u):=&\lambda_0^2r^2\left((2+y\partial_y)(w^2 \int_0^1(1-t)e^{Q_r+t\lambda_0^2 w} dt)\right.\\
&\left.+\div(u w\int_0^1 e^{Q_r+t\lambda_0^2 w}dt)+\div(u_rw^2\int_0^1(1-t)e^{Q_r+t\lambda_0^2 w}dt)\right) ,\\
G_r(w,u):=&r^2 \div((y+u_r)\partial_y u_r)+\lambda_0^2r^2\left(2w^2\int_0^1(1-t)e^{Q_r+t\lambda_0^2 w}dt\right.\\
&\left.+\div(u\partial_y u_r+(y+u_r)\partial_y u+\lambda_0^2 u\partial_y u)\right).
\end{align*}
We then note that \eqref{diffeq} can be written in integral form as
\begin{equation}
\begin{cases}
u=T_r(F_r(w,u))+K_r(G_r(w,u))\\
w=S_r(G_r(w,u))
\end{cases}.
\label{integralform}
\end{equation}
As done in the exterior region, we solve this difference equation by showing a contraction in a suitable space. In particular, we adopt the following definitions.
\begin{defin}
We define the solution Banach spaces $Z_{y_1},Y_{y_1}$ and nonlinearity Banach spaces $\calG_{y_1},\calF_{y_1}$ by
\begin{align*}
Z_{y_1}&:=\{f\in C^2([0,y_1])\colon \norm{f}_{Z_{y_1}}<\infty,\quad f(0)=0\},\\
Y_{y_1}&:=\{f\in C^2([0,y_1])\colon \norm{f}_{Y_{y_1}}<\infty,\quad f(0)=f'(0)=0\},\\
\calG_{y_1}&=\{f\in C^0([0,y_1])\colon \norm{f}_{\calG_{y_1}}<\infty\},\\
\calF_{y_1}&=\{f\in C^1([0,y_1])\colon \norm{f}_{\calF_{y_1}}<\infty,\quad f(0)=0\},
\end{align*}
where
\begin{align*}
\norm{f}_{Z_{y_1}}&:=\sup_{y\in [0,y_1]}\langle y\rangle^{-3/2}(\abs{f}+\ev{y}\abs{\partial_r f}+\ev{y}^2 \abs{\partial_r^2f}),\\
\norm{f}_{Y_{y_1}}&:=\sup_{y\in [0,y_1]}\langle y\rangle^{-5/2}(\abs{f}+\ev{y}\abs{\partial_r f}+\ev{y}^2 \abs{\partial_r^2f}),\\
\norm{f}_{\mathcal{G}_{y_1}}&:=\sup_{y\in [0,y_1]}\langle y\rangle^{1/2}\abs{f},\\
\norm{f}_{\mathcal{F}_{y_1}}&:=\sup_{y\in [0,y_1]}\langle y\rangle^{1/2}(\abs{f}+\ev{y}\abs{\partial_y f}).
\end{align*}
\end{defin}
We then solve for $w\in Z_{y_1},u\in Y_{y_1}$ by putting $G,F$ in $\mathcal{G}_{y_1}$ and $\mathcal{F}_{y_1}$ respectively. As a note, the weights are consistent with the heuristics \eqref{heuristic1}, \eqref{heuristic2}. In addition, the number of derivatives in each norm is motivated by the heuristic that inverting the divergence operator gains 1 derivative, and inverting $H$ gains 2 derivatives.

As in the corresponding in the exterior region, we summarize our construction by means of three lemmas.
\begin{lem} If $y_0\ll 1$ and $\lambda_0\leq \frac{1}{10}y_0$ is fixed, then for $r\in(\frac{1}{2},2)$ if $G,F\in \calG_{y_1},\calF_{y_1}$ respectively we have that $T_r(F), K_r(G)\in Y_{y_1}$ and $S_r(G)\in Z_{y_1}$. Moreover, we have
\[
\norm{T_r}_{L(\calF_{y_1},Y_{y_1})},\norm{S_r}_{L(\calG_{y_1},Z_{y_1})},\norm{K_r}_{L(\calG_{y_1},Y_{y_1})}\leq C_1
\]
In addition, we have that $r\mapsto (T_r,S_r,K_r)$ is $C^1$ with domain $(\frac{1}{2},2)$. For all $r_0\in (\frac{1}{2},2)$,\[
\norm{\pdv{T_r}{r}|_{r_0}}_{L(\calF_{y_1},Y_{y_1})},\norm{\pdv{S_r}{r}|_{r_0}}_{L(\calG_{y_1},Z_{y_1})},\norm{\pdv{K_r}{r}|_{r_0}}_{L(\calG_{y_1},Y_{y_1})}\leq C_1.
\]
In addition, if $r_1$ is also in $(\frac{1}{2},2)$, we have the bounds
\begin{equation}
\norm{\pdv{S_r}{r}|_{r_1}-\pdv{S_r}{r}|_{r_0}}_{L(\calG_{y_1},Z_{y_1})}\leq C_1\abs{r_1-r_0},
\label{linearderivativelipschitz}
\end{equation}
and analogous difference bounds for $T_r,K_r$.
\label{linlem}
\end{lem}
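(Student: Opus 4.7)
The strategy is to work directly from the explicit integral representations of $T_r$ and $S_r$ and the composition definition of $K_r$, using the asymptotic expansions of $e^Q$, $u_*$, $v_1$, $v_2$ from the previous subsection together with their smoothness at the origin. The condition $\lambda_0 \leq y_0/10$ gives $y_1 \geq 10$, so all constants must be uniform as $y_1 \to \infty$. The proof naturally splits into three steps: boundedness of each operator; $C^1$-dependence in $r$ by direct differentiation of the formulas; and the Lipschitz bound on $\p_r$ via a uniform bound on the second $r$-derivative combined with the mean value theorem.

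For boundedness, first rewrite $T_r f(y) = -\frac{e^{-Q(y/r)}}{y^2}\int_0^y f(y')(y')^2\,dy'$ using $r^{-2}e^{-Q_r(y)} = e^{-Q(y/r)}$. The $\calF_{y_1}$-bound $\abs{f(y')} \lesssim \ev{y'}^{-1/2}\norm{f}_{\calF_{y_1}}$ together with $f(0)=0$ yields $\int_0^y \abs{f}(y')^2\,dy' \lesssim \min(y^4, y^{5/2})\norm{f}_{\calF_{y_1}}$, and combining this with the outer factor $e^{-Q(y/r)}/y^2$ (which scales as $1/(r^2 y^2)$ near $0$ and $1/r^2$ at infinity) gives $\abs{T_r f(y)} \lesssim \ev{y}^{5/2}\norm{f}_{\calF_{y_1}}$ with $T_r f = O(y^2)$ at the origin; derivative bounds follow by differentiating the formula. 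For $S_r$, exploit the variation-of-parameters structure: at infinity the $y^{-1/2}$ decay of $v_{i,r}$ yields $\abs{S_r g} \lesssim \ev{y}^{3/2}\norm{g}_{\calG_{y_1}}$; near $y = 0$, the simple pole $v_2(z) \sim -1/(2z)$ gives $v_{2,r}(y') = O(r/y')$, so the integrand $g\, v_{2,r}(y')^2$ remains integrable, while the outer singularity $v_{2,r}(y) = O(r/y)$ in the second term of the formula is cancelled by the vanishing $\int_0^y g\, v_{1,r}(y')^2\,dy' = O(y^3)$, producing $S_r g = O(y^2/r^2)$ near $0$. Boundedness of $K_r = T_r \circ L_r \circ S_r$ then follows by composition once one checks that $L_r$ maps $Z_{y_1}$ boundedly into $\calF_{y_1}$, which is a direct computation from the formula for $L_r$ using the asymptotics of $e^{Q_r}$, $u_r$ and the vanishing $S_r g(0) = 0$.

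For $C^1$-dependence in $r$, differentiate the explicit formulas. For instance $\p_r T_r f = -\frac{Q'(y/r)e^{-Q(y/r)}}{r^2 y}\int_0^y f(y')(y')^2\,dy'$, and using $Q'(z) = O(1/\ev{z})$ at infinity together with $Q'(z) = O(z)$ near $0$ (since $Q$ is smooth at $0$ with $Q(0) = Q'(0) = 0$ by symmetry), the new outer factor scales like $e^{-Q(y/r)}/y^2$ up to an overall $r^{-1}$, so the boundedness estimate for $\p_r T_r$ parallels that for $T_r$. Analogous computations handle $\p_r S_r$ and, via the product rule on the composition, $\p_r K_r$. For the Lipschitz bound \eqref{linearderivativelipschitz}, differentiate once more to obtain uniform control of $\pdv[2]{S_r}{r}$ in the operator norm on $(1/2, 2)$, which via the mean value theorem yields
\[
\norm{\pdv{S_r}{r}|_{r_1} - \pdv{S_r}{r}|_{r_0}}_{L(\calG_{y_1}, Z_{y_1})} \leq \abs{r_1 - r_0} \sup_{r \in (1/2, 2)} \norm{\pdv[2]{S_r}{r}}_{L(\calG_{y_1}, Z_{y_1})};
\]
the analogous bounds for $T_r$ and $K_r$ are identical in structure.

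The main obstacle is uniform bookkeeping of growth and vanishing rates across two incompatible asymptotic regimes. Near $y = 0$, all the potential singularities (the $1/y^2$ outside $T_r$, the simple pole of $v_{2,r}$ inside $S_r$) must be cancelled by appropriate orders of vanishing from the inputs and from the $(y')^2$ Jacobian, and the outputs must additionally achieve the required boundary conditions at $0$. At infinity, the $y^{-1/2}$ decay of $v_1$, $v_2$ is precisely the critical rate matching the $Z_{y_1}$ and $Y_{y_1}$ weights, so any slack in the asymptotic estimates would produce bounds that blow up as $y_1 \to \infty$: the estimates must therefore be carried out using the precise rates from the previous subsection rather than rough majorants.
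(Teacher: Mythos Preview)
Your proposal is correct and follows essentially the same route as the paper: work from the explicit integral formulas for $T_r$ and $S_r$, split into the regimes $y\lesssim 1$ and $y\gtrsim 1$, feed in the origin and infinity asymptotics of $e^Q$, $u_*$, $v_1$, $v_2$, and then handle $K_r$ by checking $L_r\colon Z_{y_1}\to\calF_{y_1}$ is bounded. The only noteworthy differences are in presentation. For the $y$-derivatives of $S_rg$ and $T_rf$, the paper is slightly more explicit about why no extra regularity on $f,g$ is needed: it records the cancellation in $(S_rg)'$ coming from the Wronskian structure, and then recovers $(S_rg)''$ from the identity $H_rS_r=\id$ (and analogously $(T_rf)'$ from the first-order ODE satisfied by $T_rf$), so that two $y$-derivatives of $S_rg$ are controlled by pointwise bounds on $g$ alone. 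Your phrase ``derivative bounds follow by differentiating the formula'' covers this, but it is worth being aware that naive differentiation under the integral would appear to demand more than $\calG_{y_1}$ gives. For the Lipschitz estimate \eqref{linearderivativelipschitz}, the paper argues via direct difference estimates rather than bounding a second $r$-derivative; your mean-value-theorem route is equally valid and arguably cleaner.
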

For the proof, see Subsection \ref{linlemproof}.
\begin{lem} For all $C>0$ there exists $C_2(C)>0$ (independent of $y_0,\lambda,r$) such that for $y_0\ll 1$ and fixed $\lambda_0\leq \frac{1}{10}y_0$, $r\in (\frac{1}{2},2)$,
\begin{enumerate}[wide,label=(\alph*)]
\item
If $(w,u)\in Z_{y_1}\times Y_{y_1}$ and $\norm{u}_{Y_{y_1}}+\norm{w}_{Z_{y_1}}\leq C$ then
\[
(G_{r}(w,u),F_r(w,u))\in \calG_{y_1}\times \calF_{y_1}\qquad \text{and}\qquad \norm{F_{r}(w,u)}_{\mathcal{F}_{y_1}}+\norm{G_{r}(w,u)}_{\mathcal{G}_{y_1}}\leq C_2.
\] 
\item
We have that $(r,w,u)\mapsto (G_{r}(w,u),F_{r}(w,u))$ is $C^1$ as a function of $r,w,u$ with domain
\[
(\frac{1}{2},2)\times \{(w,u)\in Z_{y_1}\times Y_{y_1}\colon \norm{w}_{Z_{y_1}}+\norm{u}_{Y_{y_1}}< C\}
\]
valued in $\calG_{y_1}\times \calF_{y_1}$ with the uniform bounds
\begin{equation}
\norm{\pdv{G_r}{r}}_{\calG_{y_1}}+\norm{\pdv{F_r}{r}}_{\calF_{y_1}}\leq C_2,
\label{derivativebounda}
\end{equation}
\begin{equation}
\norm{\pdv{G_r}{w}}_{L(Z_{y_1},\calG_{y_1})}+\qquad \norm{\pdv{G_r}{u}}_{L(Y_{y_1},\calG_{y_1})}+\norm{\pdv{F_r}{w}}_{(Z_{y_1},\calF_{y_1})}+\norm{\pdv{F_r}{u}}_{L(Y_{y_1},\calF_{y_1})}\leq C_2 y_0^2
\label{derivativeboundb}
\end{equation}
for all $r_0,w_0,u_0$ in this domain. In addition, if $(r_1,w_1,u_1)$ is also in this domain, we have the following Lipschitz control of the derivative
\begin{equation}
\norm{\pdv{G}{r}|_{r_1,p_1,w_1}-\pdv{G}{r}|_{r_0,p_0,w_0}}\leq C_1(\abs{r_1-r_0}+\norm{w_1-w_0}_{Z_{y_1}}+\norm{u_1-u_0}_{Y_{y_1}}),
\label{Grlipschitz}
\end{equation}
\begin{multline}
\norm{\pdv{G}{w}|_{r_1,p_1,w_1}-\pdv{G}{w}|_{r_0,p_0,w_0}}_{L(Z_{y_1,\calG_{y_1}})}+\norm{\pdv{G}{u}|_{r_1,p_1,w_1}-\pdv{G}{u}|_{r_0,p_0,w_0}}_{L(Y_{y_1,\calG_{y_1}})}\\
\leq y_0^2C_1(\abs{r_1-r_0}+\norm{w_1-w_0}_{Z_{y_1}}+\norm{u_1-u_0}_{Y_{y_1}})
\label{Gfunctionlipschitz}
\end{multline}
with analogs of \eqref{derivativebounda}, \eqref{derivativeboundb}, \eqref{Grlipschitz} and \eqref{Gfunctionlipschitz} holding for $F$ as well.
\end{enumerate}
\label{nonlinlemma}
\end{lem}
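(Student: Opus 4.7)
The plan is to verify both (a) and (b) by inspecting each summand of $F_r$ and $G_r$ separately, exploiting two structural features. First, every $(w,u)$-dependent contribution (and every summand of $F_r$) carries a prefactor of $\lambda_0^2$; only the ``source'' piece $r^2\div((y+u_r)\partial_y u_r)$ in $G_r$ lacks this factor. Second, the domain constraint $y\leq y_1=y_0/\lambda_0$ turns $\lambda_0^2 y^k$ into $\lambda_0^{2-k} y_0^k$, which equals $y_0^2$ when $k=2$ and is smaller for $k<2$. This is precisely the mechanism producing the factor $y_0^2$ in \eqref{derivativeboundb} and \eqref{Gfunctionlipschitz} and ensuring that every nonlinear contribution is uniformly bounded in (a).

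The first step is to collect uniform (in $r\in(\tfrac12,2)$) pointwise bounds on the rescaled background objects. From the asymptotics \eqref{Qrefinedasymptot}, \eqref{ustarasymptot}, and Lemma \ref{kernelasymptotics}, and by differentiating \eqref{stareq} and \eqref{ustareq} to promote these to derivative bounds, I would derive
\[
e^{Q_r}+\langle y\rangle\abs{\partial_y e^{Q_r}}+\langle y\rangle^2\abs{\partial_y^2 e^{Q_r}}\lesssim \langle y\rangle^{-2},
\]
\[
\abs{u_r}+\langle y\rangle\abs{u_r'}+\langle y\rangle^2\abs{u_r''}\lesssim \langle y\rangle^{1/2},
\]
with analogous bounds on the $\partial_r$-derivatives via the identity $\partial_r Q_r=-r^{-1}v_{1,r}$ and by differentiating \eqref{ustareq}. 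On the set $\{\norm{w}_{Z_{y_1}}+\norm{u}_{Y_{y_1}}\leq C\}$, the estimate $\abs{w}\lesssim\langle y\rangle^{3/2}$ yields $\lambda_0^2\abs{w}\lesssim \lambda_0^{1/2}y_0^{3/2}\leq y_0$, so the exponential factor $e^{Q_r+t\lambda_0^2 w}$ is uniformly comparable to $e^{Q_r}$ and mass-positivity is automatic for $y_0\ll 1$.

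With these tools, part (a) reduces to a summand-by-summand calculation. The source term gives $(y+u_r)u_r'=O(\langle y\rangle^{1/2})$, so $\div(\cdot)=O(\langle y\rangle^{-1/2})$, exactly matching the $\langle y\rangle^{1/2}$ weight in $\calG_{y_1}$. For every other summand, the prefactor $\lambda_0^2$ combines via Leibniz with at most two factors of $y$ arising from $\div$ acting on products such as $uw$, $(y+u_r)\partial_y u$, or $u_r w^2$, and the estimate $\lambda_0^2 y^2\leq y_0^2\lesssim 1$ closes the bound in the weighted norm. The derivative counts built into $Z_{y_1}$, $Y_{y_1}$, $\calF_{y_1}$ are exactly what one needs to absorb the $\partial_y$ in $\div$ and the $\partial_y$ appearing in the norm of $\calF_{y_1}$.

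For part (b), smoothness in $(r,w,u)$ is automatic because $F_r, G_r$ are polynomial in $(w,u)$ multiplied by smooth $r$-dependent factors and by the exponential $\int_0^1 e^{Q_r+t\lambda_0^2 w}\,dt$. I would compute the partial derivatives explicitly and apply the bounds of part (a) to the differentiated expressions. Derivatives in $w$ or $u$ act only on terms already carrying $\lambda_0^2$, which produces the $y_0^2$ factor in \eqref{derivativeboundb}; the $\partial_r$-derivative can additionally hit the source term of $G_r$, yielding the $\lambda_0^2$-free contribution responsible for the clean $C_2$ bound in \eqref{derivativebounda}. The Lipschitz estimates \eqref{Grlipschitz}, \eqref{Gfunctionlipschitz} and their $F$-analogs then follow by writing differences as integrals via the fundamental theorem of calculus and running the same estimates on the integrand; the exponential is handled using $\abs{e^{a_1}-e^{a_0}}\leq e^{\max(a_0,a_1)}\abs{a_1-a_0}$ with $a_i=Q_r+t\lambda_0^2 w_i$. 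The main obstacle is purely organizational: the lemma packages a long list of distinct but individually routine weighted estimates, one per summand of $F_r, G_r$ and their derivatives, and each conforms to the template described for (a).
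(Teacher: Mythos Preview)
Your proposal is correct and follows essentially the same approach as the paper: both decompose $F_r,G_r$ into summands, use the background bounds $e^{Q_r}\lesssim\langle y\rangle^{-2}$, $|u_r|\lesssim\langle y\rangle^{1/2}$ together with the key inequality $\lambda_0\langle y\rangle\lesssim y_0$ on $[0,y_1]$, and then compute explicit partial derivatives noting that every $(w,u)$-dependent term carries a $\lambda_0^2$ prefactor, which converts to the $y_0^2$ in \eqref{derivativeboundb}. The paper's write-up is organized slightly differently (it rewrites $F_r,G_r$ in a second form before taking $w,u$-derivatives) but the underlying estimates are identical to yours.
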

For proof see Subsection \ref{nonlinlemmaproof}.
\begin{lem}
\begin{enumerate}[wide,label=(\alph*)]
\item There exists a universal constant $C_3$ such that for fixed $y_0\ll 1$, $\lambda_0 < \frac{1}{10}y_0$, $r\in (\frac{1}{2},2)$, there exists a $C^2$ solution to \eqref{originselfsimeq} of the form
\[
\tilde{w}=Q_{r\lambda_0}+\lambda_0^2w[\lambda_0;r](\cdot/\lambda_0),\qquad \tilde{u}=u_{r\lambda_0}+\lambda_0^3 u[\lambda_0;r](\cdot/\lambda_0)
\]
with $(w,u)\in Z_{y_1}\times Y_{y_1}$, and $\norm{w}_{Z_{y_1}}+\norm{u}_{Y_{y_1}}\leq \frac{C_3}{100}$ (inequality independent of $y_0,\lambda_0,r$) and it is the unique solution such that $\norm{u}_{Y_{y_1}}+\norm{w}_{Z_{y_1}}\leq C_3$. 
\item
Moreover, we have that the map $r\mapsto (u[\lambda_0;r],w[\lambda_0;r])$ is $C^1$ with respect to $r\in (\frac{1}{2},2)$ valued in $Z_{y_1}\times Y_{y_1}$.
\end{enumerate}
\label{intconstructionlemmapre}
\end{lem}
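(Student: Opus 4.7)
The plan is to recast \eqref{integralform} as a fixed point problem and apply the Banach contraction mapping theorem in $Z_{y_1}\times Y_{y_1}$, exploiting the smallness factor $y_0^2$ present in the $w,u$--derivatives of $G_r,F_r$ from Lemma \ref{nonlinlemma}. Concretely, for $r\in(\tfrac12,2)$ define
\[
\Phi_r(w,u):=\bigl(\,S_r(G_r(w,u))\,,\;T_r(F_r(w,u))+K_r(G_r(w,u))\,\bigr),
\]
so that \eqref{integralform} becomes $(w,u)=\Phi_r(w,u)$. By Lemma \ref{linlem} the linear operators $S_r,T_r,K_r$ are bounded between the appropriate spaces uniformly in $r$, and by Lemma \ref{nonlinlemma}(a) we have $\|F_r(0,0)\|_{\calF_{y_1}}+\|G_r(0,0)\|_{\calG_{y_1}}\le C_2(0)$, yielding
\[
\|\Phi_r(0,0)\|_{Z_{y_1}\times Y_{y_1}}\le 2C_1 C_2(0).
\]
Thus if we choose $C_3\ge 400\,C_1\, C_2(C_3)$ (possible since $C_2$ in Lemma \ref{nonlinlemma} depends only on the size of the ball, and we can absorb it once and for all into the choice of $C_3$), then $\|\Phi_r(0,0)\|\le C_3/200$.

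The contraction property follows from Lemma \ref{linlem} and Lemma \ref{nonlinlemma}(b): for $(w_0,u_0),(w_1,u_1)$ in the ball of radius $C_3$ in $Z_{y_1}\times Y_{y_1}$, the mean value inequality applied to the $C^1$ map $(w,u)\mapsto (G_r(w,u),F_r(w,u))$ combined with \eqref{derivativeboundb} gives
\[
\|F_r(w_1,u_1)-F_r(w_0,u_0)\|_{\calF_{y_1}}+\|G_r(w_1,u_1)-G_r(w_0,u_0)\|_{\calG_{y_1}}\le 2 C_2 y_0^2\bigl(\|w_1-w_0\|_{Z_{y_1}}+\|u_1-u_0\|_{Y_{y_1}}\bigr),
\]
and composing with $S_r,T_r,K_r$ (each of norm $\le C_1$) yields $\|\Phi_r(w_1,u_1)-\Phi_r(w_0,u_0)\|\le 4 C_1 C_2 y_0^2\cdot \|(w_1,u_1)-(w_0,u_0)\|$. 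For $y_0$ sufficiently small (depending only on $C_1,C_2$), the Lipschitz constant is $\le 1/2$, so $\Phi_r$ is a contraction on the ball of radius $C_3$ and, in particular, maps the ball of radius $C_3/100$ into itself. The Banach fixed point theorem then produces a unique fixed point $(w[\lambda_0;r],u[\lambda_0;r])$ in the ball of radius $C_3$, which automatically lies in the smaller ball of radius $C_3/100$. Unwinding the change of variables \eqref{scaledvarsdef} gives the advertised $C^2$ solution of \eqref{originselfsimeq}; the regularity $C^2$ is inherited from the regularity of elements of $Z_{y_1}$ and $Y_{y_1}$.

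For part (b), I will apply the implicit function theorem to the $C^1$ map
\[
\Psi(r,w,u):=(w,u)-\Phi_r(w,u)\colon (\tfrac12,2)\times B_{C_3}\to Z_{y_1}\times Y_{y_1}.
\]
The joint $C^1$ regularity of $\Psi$ follows from Lemma \ref{linlem} (smoothness of $r\mapsto S_r,T_r,K_r$ together with the Lipschitz control \eqref{linearderivativelipschitz}) and Lemma \ref{nonlinlemma}(b). At the fixed point, the partial derivative in $(w,u)$ satisfies $\|D_{(w,u)}\Phi_r\|_{\mathrm{op}}\le 4C_1 C_2 y_0^2<1/2$, so $D_{(w,u)}\Psi=\mathrm{Id}-D_{(w,u)}\Phi_r$ is invertible by Neumann series with uniform bounds. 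The implicit function theorem therefore yields $C^1$ dependence of the solution on $r\in(\tfrac12,2)$, completing the proof. The main (minor) obstacle is purely bookkeeping: choosing $C_3$ and the threshold for $y_0$ so that the estimates close simultaneously; no new analytic input beyond Lemmas \ref{linlem} and \ref{nonlinlemma} is required.
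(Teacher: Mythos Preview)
Your approach is essentially the same as the paper's for part (a): both set up the map $\Phi_r$ and run a contraction on $Z_{y_1}\times Y_{y_1}$, using the $y_0^2$ smallness from \eqref{derivativeboundb} to get the Lipschitz constant below $1/2$. One bookkeeping point: your choice ``$C_3\ge 400\,C_1\,C_2(C_3)$'' is circular as written, since $C_2$ depends on the ball radius. The paper avoids this by first bounding the image of $(0,0)$ (giving $\tilde C=2\max(C_1C_2(1),1)$), then taking $C_3$ proportional to $\tilde C$ and only afterwards invoking $C_2(\tilde C)$ in the contraction step; you can fix your version the same way.

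For part (b) you invoke the implicit function theorem, whereas the paper reproves it by hand: it writes down the linearized fixed point equation \eqref{linearizediteration}, solves it by the same contraction, and then shows directly that the resulting $(w_{lin},u_{lin})$ is the derivative and is Lipschitz in $r$. Your route is shorter and perfectly valid here, since Lemmas \ref{linlem} and \ref{nonlinlemma} give exactly the joint $C^1$ regularity and the operator bound $\|D_{(w,u)}\Phi_r\|<1/2$ needed for the IFT (the evaluation map $L(\calG,Z)\times\calG\to Z$ being smooth makes the composition $C^1$). The paper's explicit argument buys nothing extra beyond making the constants visible.
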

For proof see Subsection \ref{intconstructionlemmapreproof}.

In preparation for the matching, we convert this statement to the original variables. As a note, a consequence of the uniqueness we have that
\begin{equation}
w[r \lambda_0;1]=\frac{1}{r^2}w[\lambda_0;r](r \cdot),\qquad u[r\lambda_0;1]=\frac{1}{r^3}u[\lambda_0;r](r\cdot).
\label{scalingidentity}
\end{equation}
In particular, we define for a fixed $\lambda_0,r$,
\begin{align*}
\rho_{int}[\lambda_0;r]:=&e^{Q_{r\lambda_0}+\lambda_0^2w[\lambda_0;r](\cdot/\lambda_0)}-e^{Q_{r\lambda_0}}\\
=&\lambda_0^2w[r;\lambda_0](\cdot/\lambda)e^{Q_{\lambda}}\int_0^1 e^{t\lambda_0^2 w[r;\lambda_0](\cdot/\lambda)}dt,\\
u_{int}[\lambda_0;r]:=&\frac{\lambda_0^3}{\lambda^3}u[\lambda_0;r](\cdot/\lambda_0).
\end{align*}
By \eqref{scalingidentity}, the righthand side depends only on $\lambda:=r\lambda_0$. In addition, we note that the $C^1$-dependence $w[\lambda_0;r]$ on $r$ implies (by keeping $\lambda_0$ fixed), $C^1$ dependence of $(\rho_{int}[\lambda](y_0), u_{int}[\lambda](y_0))$ on $\lambda$. This change of variables leads to the following corollary of Lemma \ref{intconstructionlemmapre}.

\begin{lem} If $y_0\ll 1$ and $\lambda<\frac{1}{10}y_0$, there exists $C^2$ functions $\tilde{\rho},\tilde{u}$ defined on $[0,y_0]$ with $\tilde{\rho}>0$ which solve \eqref{selfsimeq2} of the form
\[
\begin{cases}
\tilde{\rho}_{int}[\lambda](y)=e^{Q_{\lambda}}+\rho_{int}[\lambda](y)\\
\tilde{u}_{int}=u_{\lambda}+\lambda^3 u_{int}[\lambda](y)
\end{cases}
\]
where $Q_{\lambda},u_{\lambda}$ are defined by \eqref{scaledvarsdef}, $\rho_{int},u_{int}$ satisfy 
\[
\rho_{int}(0)=u_{int}(0)=u_{int}'(0)=0
\]
and they obey the following bounds on $[0,y_0]$ (uniform w.r.t. $y_0$, $\lambda$)
\begin{equation}
\abs{\rho}\lesssim \frac{y}{\lambda}\ev{\frac{y}{\lambda}}^{-3/2},\qquad \abs{u}\lesssim \frac{y^2}{\lambda^2} \ev{\frac{y}{\lambda}}^{1/2},
\label{interror}
\end{equation}
\begin{equation}
\abs{\rho'}\lesssim \frac{1}{\lambda}\ev{\frac{y}{\lambda}}^{-3/2},\qquad \abs{u'}\lesssim \frac{y}{\lambda^2}\ev{\frac{y}{\lambda}}.
\label{interrorb}
\end{equation}
Moreover,
\[
\lambda\mapsto (\rho_{int}(y_0),u_{int}(y_0))
\]
is $C^1$ with domain $(0,\frac{1}{10}y_0)$. We also have the following velocity bound on $[0,y_0]$:
\begin{equation}
\abs{\tilde{u}_{int}+y}\leq \frac{1}{2}.
\label{inthuntercontrol}
\end{equation}
\label{intconstructionlemma}
\end{lem}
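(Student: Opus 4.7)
The plan is to derive this lemma as a direct change-of-variables corollary of Lemma \ref{intconstructionlemmapre}, and then to verify in turn the pointwise asymptotics, the $C^1$-dependence in $\lambda$, and the velocity bound.

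For the construction, I apply Lemma \ref{intconstructionlemmapre}(a) with $\lambda_0 := \lambda \in (0, y_0/10)$ and $r := 1$ to obtain $(w, u) := (w[\lambda; 1], u[\lambda; 1]) \in Z_{y_1} \times Y_{y_1}$ (where $y_1 = y_0/\lambda$) with $\norm{w}_{Z_{y_1}} + \norm{u}_{Y_{y_1}} \leq C_3/100$. Setting $\tilde{w} := Q_\lambda + \lambda^2 w(\cdot/\lambda)$, $\tilde{u} := u_\lambda + \lambda^3 u(\cdot/\lambda)$, $\tilde{\rho} := e^{\tilde{w}}$, and expanding via $e^a - 1 = a\int_0^1 e^{ta}\,dt$ yields the stated decomposition with
\[
\rho_{int}(y) = e^{Q_\lambda(y)}\bigl(e^{\lambda^2 w(y/\lambda)} - 1\bigr), \qquad u_{int}(y) = u(y/\lambda).
\]
Positivity $\tilde{\rho} > 0$ is immediate from the exponential form; the boundary conditions $\rho_{int}(0) = u_{int}(0) = u_{int}'(0) = 0$ follow from $w(0) = u(0) = u'(0) = 0$; and because $\tilde{u}(0)=0$ and $\tilde{u}'(0)=-\tfrac{2}{3}$ hold by construction, the divergence-form equation \eqref{originselfsimeq} can be integrated back to recover \eqref{selfsimeq2}.

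For the pointwise bounds \eqref{interror}--\eqref{interrorb}, the point is to combine the norm control on $(w, u)$ with their boundary vanishing. From $\norm{w}_{Z_{y_1}} \leq C_3/100$ one has $|w'(s)| \lesssim \langle s\rangle^{1/2}$, and $w(0)=0$ then yields $|w(s)| \lesssim s\langle s\rangle^{1/2}$ by integration; analogously from $\norm{u}_{Y_{y_1}}$ and $u(0)=u'(0)=0$ one gets $|u'(s)| \lesssim s\langle s\rangle^{1/2}$ and $|u(s)| \lesssim s^2 \langle s\rangle^{1/2}$. Combining these with $e^{Q_\lambda(y)} \lesssim \lambda^{-2}\langle y/\lambda\rangle^{-2}$ (from \eqref{exponentialasymptot}) and $|Q_\lambda'(y)| \lesssim \lambda^{-1}\langle y/\lambda\rangle^{-1}$ (from $Q'(s) = O(\min(s, s^{-1}))$), and noting that $\lambda^2 w(y/\lambda) = O(y_0^2)$ on $[0, y_0]$ makes all exponential factors $O(1)$, the stated bounds follow by direct substitution $s = y/\lambda$ and a product-rule expansion of $\rho_{int}'$.

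For the $C^1$-dependence of $\lambda \mapsto (\rho_{int}(y_0), u_{int}(y_0))$, I fix a reference point $\lambda_* \in (0, y_0/10)$ and apply Lemma \ref{intconstructionlemmapre}(b) with $\lambda_0 := \lambda_*$ and $r$ in a neighborhood of $1$ small enough that $r\lambda_* < y_0/10$. This yields a $C^1$ map $r \mapsto (w[\lambda_*; r], u[\lambda_*; r]) \in Z_{y_1} \times Y_{y_1}$. By the uniqueness in part (a) combined with the scaling identity \eqref{scalingidentity}, this family realizes (after rescaling) the solutions parameterized here by $\lambda := r\lambda_*$. Since evaluation at the fixed point $y_0/\lambda_*$ is a bounded linear functional on $Z_{y_1}$ and $Y_{y_1}$, composing with the $C^\infty$ change of parameter $\lambda \mapsto r = \lambda/\lambda_*$ gives $C^1$-regularity of $\lambda \mapsto (\rho_{int}(y_0), u_{int}(y_0))$ near $\lambda_*$; since $\lambda_*$ was arbitrary this extends to all of $(0, y_0/10)$.

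Finally, for the velocity bound \eqref{inthuntercontrol}, I split $\tilde{u}_{int} + y = (u_\lambda + y) + \lambda^3 u_{int}$ and estimate each piece on $[0, y_0]$. On $[0, \lambda]$, smoothness of $u_*$ with $u_*(0)=0$ gives $|u_*(y/\lambda)| \lesssim y/\lambda$, so $|u_\lambda(y)| \lesssim y \leq \lambda$; on $[\lambda, y_0]$, the asymptotic \eqref{ustarasymptot} gives $|u_\lambda(y)| \lesssim \sqrt{\lambda y} \lesssim y_0$; in either case $|u_\lambda + y| \lesssim y_0$. The error satisfies $|\lambda^3 u_{int}(y)| \lesssim \lambda^3(y/\lambda)^2\langle y/\lambda\rangle^{1/2} \lesssim \lambda^{1/2} y_0^{5/2} \lesssim y_0^3$ on $[0, y_0]$, using $\lambda \leq y_0/10$. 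Summing bounds $|\tilde{u}_{int} + y|$ by a constant multiple of $y_0$, which is $\leq 1/2$ for $y_0 \ll 1$. The main technical step in the whole argument is the second paragraph: translating the function-space bounds into the sharp pointwise decay requires jointly invoking the norm control and the boundary vanishing of $(w, u)$.
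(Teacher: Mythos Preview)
Your proof is correct and follows essentially the same route as the paper: invoke Lemma~\ref{intconstructionlemmapre} with $\lambda_0=\lambda$, $r=1$, unwind the change of variables, and then separately handle the velocity bound by splitting $\tilde{u}_{int}+y$ into the $u_\lambda$ piece and the $\lambda^3 u_{int}$ error. Your treatment of the pointwise bounds \eqref{interror}--\eqref{interrorb} is in fact more explicit than the paper's (which leaves the conversion from the $Z_{y_1}\times Y_{y_1}$ norms to the stated decay to the reader); in particular your use of $w(0)=0$ and $u(0)=u'(0)=0$ to upgrade $|w(s)|\lesssim\langle s\rangle^{3/2}$ to $|w(s)|\lesssim s\langle s\rangle^{1/2}$ (and similarly for $u$) is exactly what is needed to recover the small-$y$ behavior in \eqref{interror}.
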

\begin{proof}
We start with Lemma \ref{intconstructionlemmapre}. Converting variables, and representing
\[
\rho_{int}[\lambda]=\lambda^2w[\lambda;1](\cdot/\lambda)e^{Q_{\lambda}}\int_0^1 e^{t\lambda^2 w[\lambda;1](\cdot/\lambda)}dt,\qquad u_{int}[\lambda]=u[\lambda;1](\cdot/\lambda),
\]
yields the lemma up to and including \eqref{interrorb}. To show \eqref{inthuntercontrol}, we note that $\abs{u_{\lambda}}\lesssim \lambda\frac{y}{\lambda}\ev{\frac{y}{\lambda}}^{-1/2}$, so along with \eqref{interror}, and the fact that $z\ev{z}^{-1/2}$, $z^2\ev{z}^{1/2}$ are increasing,
\[
\abs{u_{\lambda}}\lesssim \lambda \ev{\frac{y_0}{\lambda}}^{-1/2}\lesssim y_0,\qquad \abs{\lambda^3 u_{int}}\lesssim \lambda^3 \ev{\frac{y_0}{\lambda}}^{5/2}\lesssim y_0,
\]
so by choosing $y_0$ smaller if necessary we see $\abs{y+\tilde{u}}\lesssim y_0\leq \frac{1}{2}$.
\end{proof}

\section{Proof of the theorem}
\label{matchingsection}
\subsection{Existence of $C^1$ Hunter solutions}
We now prove Theorem \ref{mainthm}, except for showing that the solution is real analytic, which is covered by an independent argument in the next subsection.
\begin{proof}
We follow the outline presented in Subsection \ref{outlinesec} and follow the proof of Proposition 2.5 of \cite{crs}. The first 8 steps correspond to the ones in their proof, and in the last step we count the sonic points.
\begin{enumerate}[wide,label=\arabic*)]
\item
We start by picking $y_0\ll 1$. We then wiggle $y_0$ if necessary so that \eqref{wigglecondition} holds. Then, for every $\epsilon\leq \epsilon_0 \ll y_0$ we apply Lemma \ref{extconstructionlemma} to obtain functions $\tilde{\rho}_{ext}[\epsilon],\tilde{u}_{ext}[\epsilon]$ defined on $[y_0,\infty)$. Then, for each $\lambda\leq \frac{1}{10}y_0$, we apply Lemma \ref{intconstructionlemma} to obtain functions $\tilde{\rho}_{int}[\lambda],\tilde{u}_{int}[\lambda]$ on $[0,y_0)$.
\item
We next define the density matching function by 
\begin{align*}
\calF[y_0](\epsilon,\lambda)&=\tilde{\rho}_{ext}[\epsilon](y_0)-\tilde{\rho}_{int}[\lambda](y_0)\\
&=\left(\frac{1}{y^2}+\frac{\epsilon}{y^2}p_{hom}(y)-e^{Q_{\lambda}}\right)+\left(\epsilon \rho_{ext}[\epsilon](y_0)-\tilde{\rho}_{int}[\lambda]\right)|_{y_0}.
\end{align*}
It is clear from the asymptotic expansions given by \eqref{0asymptot}, \eqref{Qrefinedasymptot}, \eqref{exterror}, \eqref{interror},
\[
\calF[y_0](0,0)=0.
\]
Similarly, by \eqref{epsiloncontrol} we have
\[
\partial_{\epsilon}\calF[y_0](0,0)=\frac{p_{hom}(y_0)}{y_0^2}=\frac{c_1}{y_0^{5/2}}+O(\frac{1}{y_0^{\frac{1}{2}}})\neq 0.
\]
We note that $\calF[y_0](0,\lambda)=O(y_0^{-5/2}\lambda^{1/2})$, is not $C^1$ with respect to $\lambda$, so in order to apply the implicit function theorem we change variables by picking a positive $\delta<\frac{1}{2}$, and defining $\mu=\lambda^{\delta}$, $\tilde{\calF}(\epsilon,\mu)=\calF(\epsilon,\mu^{\frac{1}{\delta}})$, so
\[
\calF(0,\lambda)=\calF(0,\mu^{\frac{1}{1/2-\delta}}):=\tilde{\calF}(0,\mu)\sim \mu^{\frac{1}{2\delta}}.
\]
Thus, by the $C^1$ dependence of the solutions on the parameters in Lemmas \ref{extconstructionlemma} and \ref{intconstructionlemma}, we have that $\tilde{\calF}(\epsilon,\mu)$ will be $C^1$ on some set of the form $(-\epsilon_0,\epsilon_0)\times \mu_0)$. Thus, by the implicit function theorem, assuming $\mu \leq \mu_1$ for some constant $\mu_0$ (depending on $y_0$), we have that $\tilde{\calF}[y_0](\epsilon,\mu)=0$ determines $\epsilon$ as a $C^1$ function of $\mu$. Converting back to $\lambda$, we have a function $\epsilon(\lambda)$ (of regularity $C^{\frac{1}{2}-}$) defined on $[0,\lambda_1)$ so that
 \begin{equation}
\epsilon(\lambda)=O(\lambda^{1/2})
\label{epsilonapriori}
 \end{equation}
and
 \[
\tilde{\rho}_{ext}[\epsilon(\lambda)](y_0)=\tilde{\rho}_{int}[\lambda](y_0).
 \]
 \item
We note that by definition, we have that
\[
\epsilon(\lambda)\left(\frac{p_{hom}(y_0)}{y_0^2}+\rho_{ext}(y_0)\right)=(e^{Q_{\lambda}}-\frac{1}{y^2})(y_0)+\rho_{int}(y_0).
\]
We let $q(y)=e^Q-\frac{1}{y^2}-\frac{c_2\sin(\fr \log(y)+d_3)}{y^{5/2}}$ (it is of size $O(\frac{1}{y^3})$ by \eqref{Qrefinedasymptot}) and have
\[
\epsilon(\lambda)(\frac{p_{hom}(y_0)}{y_0^2}+\rho_{ext}(y_0))=\lambda^{1/2}\frac{c_2 \sin(\fr \log(y_0/\lambda)+d_3)}{y_0^{5/2}}+\frac{1}{\lambda^2}q(\frac{y_0}{\lambda})+\rho_{int}(y_0).
\]
If we substitute in asymptotics, we get that if $\lambda\ll y_0$,
\begin{multline*}
\epsilon(\lambda)\frac{c_1}{y_0^{5/2}}\left(1+O(y_0^2)+O(\epsilon y_0^{-1/2})\right)\\
=\frac{c_2\lambda^{1/2}}{y_0^{5/2}}\left(\sin(\fr \log(y_0/\lambda)+d_2)+O(\frac{\lambda^{1/2}}{y_0^{1/2}})+O(y_0^2)\right).
\end{multline*}
We now without loss of generality assume $\lambda^{1/2}\lesssim y_0^{5/2}$, and hence by \eqref{epsilonapriori}, $\epsilon\lesssim y_0^{5/2}$. This allows us to simplify the errors to
\[
\epsilon(\lambda)\frac{c_1}{y_0^{5/2}}(1+O(y_0^2))=\frac{c_2\lambda^{1/2}}{y_0^{5/2}}\left(\sin(\fr \log(y_0/\lambda)+d_2)+O(y_0^2)\right).
\]
Isolating $\epsilon$ gives that as $\lambda\to 0$
\[
\epsilon(\lambda)=\frac{c_2}{c_1}\lambda^{1/2}\left(\sin(\fr \log(y_0/\lambda)+d_2)+O(y_0^2)\right),
\]
which can be rewritten via \eqref{wigglecondition} as
\begin{equation}
\epsilon(\lambda)=\frac{c_2}{c_1}\lambda^{1/2}\left(\cos(\fr \log(\lambda)+d_1-d_2)+O(y_0^2)\right).
\label{epsilonasymptot}
\end{equation}
\item
We then define the second matching function as
\[
\calG[y_0](\lambda)=\frac{1}{c_2 y_0^{1/2}\lambda^{1/2}\sin(\theta_0)}\left(\tilde{u}_{ext}[\epsilon(\lambda)](y_0)-\tilde{u}_{int}[\lambda](y_0)\right).
\]
By the $C^1$ dependence of the solutions on their parameters, and the regularity from the implicit function theorem, we see that $\calG[y_0](\lambda)$ will be $C^1$ on $(0,y_1)$. We can then decompose $\calG[y_0](\lambda)$ as
\[
\calG[y_0](\lambda)=\frac{1}{\sin(\theta)}\left(\calG_1[y_0](\lambda)+\calG_2[y_0](\lambda)\right),
\]
where
\begin{align*}
\calG_1[y_0](\lambda)&:=\frac{y_0^{1/2}\epsilon(\lambda)}{c_2 \lambda^{1/2}}\w_{hom}(y_0)-\frac{1}{c_2y_0^{1/2}\lambda^{1/2}} u_\lambda(y_0),\\
\calG_2[y_0](\lambda)&:=\frac{\epsilon(\lambda)}{c_2 y_0^{1/2}\lambda^{1/2}}u_{ext}(y_0)-\frac{\lambda^{5/2}}{c_2 y_0^{1/2}}u_{int}(y_0).
\end{align*}
We can compute these using \eqref{wigglecondition}, \eqref{0asymptot}, \eqref{exterror}, \eqref{ustarasymptot},\eqref{interror}, \eqref{epsilonasymptot}
\begin{align*}
\calG_1(y_0)&=\cos(\theta_0)\cos(\fr\log(\lambda)+d_1-d_2)-\cos(\fr\log(\lambda)+d_1-d_2+\theta_0)+O(y_0^2)\\
&=\sin(\theta_0)\sin(\fr \log(\lambda)+d_1-d_2)+O(y_0^2),\\
\calG_2(y_0)&=O(y_0^{2}).\\
\end{align*}
Combining gives
\begin{equation}
\mathcal{G}[y_0](\lambda)=\sin(\fr \log(\lambda)+d_1-d_2)+O(y_0^2).
\label{matchingcondition}
\end{equation}
\item
As mentioned in the outline, we now consider
\[
\lambda_{k,\pm}=\exp\left(\frac{2}{\sqrt{7}}(-k\pi-d_1+d_2\pm \frac{1}{10})\right).
\]
Assuming that $k$ is sufficiently large so that $\lambda_{k,\pm}\leq \lambda_0$, $\lambda_{k,\pm}\lesssim y_0^{5/2}$, we have that
\[
G[\lambda](y_0)=(-1)^k\sin(\pm \frac{1}{10})+O(y_0^2).
\]
Thus, as $y_0\ll 1$, by continuity, for each $k\geq N$ for some $N$, we have that there exists $\lambda\in (\lambda_{k,-},\lambda_{k,+})$ such that $G[\lambda_k](y_0)=0$. These matching conditions ensure that we can glue the exterior and interior solutions together to get a continuous functions which solves the equation on $[0,y_0]$ and $[y_0,\infty]$. Moreover, as a consequence of Lemma \ref{extconstructionlemma} and Lemma \ref{intconstructionlemma} we have $\abs{\tilde{u}+y}\geq \frac{1}{2}$ on $[y_0,\infty)$ and $\abs{\tilde{u}+y}\leq \frac{1}{2}$ on $[0,y_0]$. In particular, $y_0$ is not a sonic point, so by nondegeneracy of the equation, the matched solutions are classical solutions on $[0,\infty)$. It remains to count the zeroes of $y^2\tilde{\rho}-1$.
\item
We first claim that for all $\epsilon\ll y_0^{1/2}$ (so that Lemma \ref{extconstructionlemma} applies) $y^2\tilde{\rho}_{ext}[\epsilon](y)-1$ and $\frac{p_{hom}}{y^2}$ have the same number of zeroes on $[y_0,\infty)$. Indeed, to see this, for convenience, we introduce new variables by
\[
\psi(t)=e^{t/2}p_{hom}(e^t),\qquad \tilde{\psi}(t)=e^{5t/2}\tilde{\rho}_{ext}(e^t).
\]
so the problem becomes that of equating the number of zeroes of $\psi$ and $\psi+\tilde{\psi}$ on $[\log(y_0),\infty)$. We then note that by \eqref{infinityasymptot} we have that as $t\to \infty$
\[
\psi=\mu_3 e^{t/2}+O(e^{-3t/2}).
\]
and hence has no roots. Similarly, by \eqref{0asymptot} we have that as $t\to -\infty$,
\[
\psi(t)=c_1\sin(\fr t+d_1)+O(e^{2t}),
\]
in particular, by \eqref{wigglecondition}
\[
\psi(\log(y_0))=c_1+O(y_0^2)
\]
so given that $y_0\ll 1$ this is bounded away from zero. Thus, we see that $\psi$ will only have discretely many roots, and they will all be in $[\log(y_0),Y_0]$ for some large $Y_0$ independent of $\epsilon,y_0$. In addition, there exists a $\delta>0$ so that if $\{t_i\}$ are the roots of $\psi$, the $\abs{\psi'}>\delta$ on $\cup_i [t_i-\frac{1}{100},t_i+\frac{1}{100}]$ and $\abs{\psi}>\delta$ on $[\log(y_0),\infty)\setminus (\cup_i [t_i-\frac{1}{100},t_i+\frac{1}{100}])$. Moreover, we have that by \eqref{exterror} and \eqref{exterrorb}
\[
\abs*{\tilde{\psi}}, \abs*{\tilde{\psi'}}\lesssim \epsilon y_0^{-1/2}\ev{e^{t/2}}.
\]
Thus, as we may assume $\epsilon\ll y_0^{1/2}e^{-Y_0/2}$, $\abs{\tilde{\psi}},\abs{\tilde{\psi'}}\leq \frac{\delta}{100}$, which, along with the boundary conditions at $\log(y_0)$ and $\infty$, imply that $\psi$ and $\psi+\tilde{\psi}$ have the same number of roots.
\item
We next claim that if $\lambda_k$ is as in step 5, then $y^2\tilde{\rho}_{int}[\lambda_k]-1$ and $y^2e^{Q_{\lambda}}-1$ intersect $0$ the same number of times on $[0,y_0]$. To show this, we again, for convenience make a change of variables. We define
\begin{align*}
\chi(t):=&\ev{e^{t}}^{\frac{5}{2}}(\lambda_k^2 e^{Q_{\lambda_k}}(\lambda_k e^t)-e^{-2t})=\\
=&\ev{e^{t}}^{\frac{5}{2}}(e^{Q}(e^t)-e^{-2t})
\end{align*}
\[
\tilde{\chi}(t)=\lambda_k^2\ev{e^{t}}^{\frac{5}{2}}\rho_{int}(\lambda_k e^t).
\]
and it suffices to show that $\chi(t)$ and $\chi(t)+\tilde{\chi}(t)$ have the same number of zeroes on $(-\infty,\log(y_0/\lambda_k)]$. We then see that as $t\to -\infty$,
\[
\chi(t)=-e^{-2t}+O(1)
\]
and is thus not vanishing in this regime. Similarly, as $t\to \infty$, we have that, by \eqref{Qrefinedasymptot},
\begin{equation}
\chi(t)=c_2\sin(\fr t+d_2)+O(e^{-t/2}).
\end{equation}
If we use that $\lambda_k^{1/2}\lesssim y_0^{2}$ and change independent variables from $t$ to $x$ defined by
\[
t=\log(y_0)-\frac{2}{\sqrt{7}}(x-d_1+d_2)
\]
we have that by \eqref{wigglecondition}
\[
\chi(t)=c_2\cos(x)+O(y_0^2)
\]
and $t_{k,\pm}=\log(y_0/\lambda_{k,\pm})$ corresponds to $x_{k,\pm}=-k \pi \pm \frac{1}{10}$. Thus, as $\abs{\cos(x)}$ is uniformly bounded away from zero on $[-k\pi-\frac{1}{5},-k\pi+\frac{1}{5}]$ we can assume $\abs{\chi(t)}$ is bounded away from zero on a neighborhood of $\log(y_0/\lambda_k)$. Similarly, near the roots of $\chi(t)$, the derivative of $\chi(t)$ is bounded away from zero. By Lemma \ref{intconstructionlemma}, we have that on $(-\infty,\log(y_0/\lambda))$,
\[
\abs{\tilde{\chi}(t)}, \abs{\tilde{\chi}'(t)}\lesssim \lambda^2 e^t\ev{e^t}\lesssim y_0^2.
\]
Thus, as with before, we may conclude that $\chi$ and $\tilde{\chi}$ have the same number of zeroes.
\item
By the previous two steps, the number of zeroes of $y^2\tilde{\rho}-1$ will be equal to the number of zeroes of $\frac{p_1}{y^2}$ on $[y_0,\infty)$, a fixed number with respect to $k$, plus the number of zeroes of $y^2e^{Q_{\lambda_k}}-1$ on $[0,y_0]$, which changes with $k$. Thus, to show that the number of zeroes of $y^2\tilde{\rho}-1$ increases by $1$ with each increment of $k$, it suffices to show that $y^2e^Q-1$ has exactly one zero on $[y_0/\lambda_k],[y_0/\lambda_{k+1}]$ for $k$ sufficiently large. Doing the same change of variables as in the previous step, it suffices to show that $\chi(t)$ has exactly one root in $[\log(y_0/\lambda_{k}),\log(y_0/\lambda_{k+1})]$. To do this, we define
\[
\bar{\chi}(t)=\chi(t)-c_2\sin(\fr t+d_2).
\]
By \eqref{Qrefinedasymptot}, we have that as $t\to \infty$,
\[
\bar{\chi}(t)=O(e^{-t/2})=O(y_0^2),\qquad \bar{\chi}'(t)=O(e^{-t/2})=O(y_0^2).
\]
so $\chi(t)$ has the same number of roots as $c_2\sin(\fr t+d_2)$ on this interval. By changing variables from $t$ to $x$, we see that $c_2\sin(\fr t+d_2)$ will have exactly one zero on $[\log(y_0/\lambda_{k}),\log(y_0/\lambda_k)]$ because $\cos(x)$ has at least one zero in $[\frac{1}{10},\pi-\frac{1}{10}]$ and at most one zero in $[-\frac{1}{10},\pi+\frac{1}{10}]$. 

Thus, for $k$ sufficiently large we have that $y^2\tilde{\rho}_k-1$ has a finite number of zeroes and $y^2\tilde{\rho}_{k+1}-1$ has exactly one more zero then $y^2\tilde{\rho}_k-1$. Thus, by shifting the labels if necessary we get that $y^2\tilde{\rho}_N-1$ has exactly $N+1$ zeroes.
\item
All that remains is to count the sonic points. As $\abs{\tilde{u}+y}\leq \frac{1}{2}$, we have that there are no sonic points on $[0,y_0]$. As $\tilde{u}+y$ is increasing past $y_0$, we conclude that on $[0,\infty)$ there is a unique point where the sonic point condition $\tilde{u}+y=\pm 1$ is satisfied. From the construction in the exterior region, we know that $y_*=1+\epsilon(\lambda_k)$ is a sonic point where the Hunter conditions are satisfied.
\end{enumerate}
\end{proof}

\subsection{Proof that $C^1$ Hunter solutions are real analytic.}
\label{regularitysubsection}
It remains to show that the $C^1$ Hunter solution is analytic. As a note, this subsection is independent from the rest of the paper and some notations are redefined.

Analyticity on $(0,\infty)\setminus y_*$ is easy as we can locally rewrite \eqref{selfsimeq2} as
\[
(\tilde{\rho}',\tilde{u}')=f(y,\tilde{\rho},\tilde{u})
\]
where
\[
f(y,a,b)=\left(-\frac{2ab}{(y+b)^2-1}(\frac{b+y}{y}-a),-\frac{2ab}{(b+y)^2-1}(-\frac{1}{ya}+b+y)\right)
\]
is analytic on $\C^3\setminus \{y=0\},\{y+b=\pm 1\}$. Thus, by standard local ODE theory we have uniqueness of $C^1$ solutions and existence of real analytic solutions, so any $C^1$ solution is real analytic.

We next show that if $(\tilde{\rho},\tilde{u})$ be a $C^1$ solution to the self-similar Euler-Poisson system defined on an interval containing $y_*\in (\frac{1}{2},\frac{3}{2})$ satisfying the Hunter boundary conditions
\begin{equation}
\tilde{\rho}_1(y_*)=\frac{1}{y_*},\qquad \tilde{\rho}_1'(y_*)=\frac{1}{y_*}-\frac{3}{y_*^2},\qquad \tilde{u}_1(y_*)=1-y_*,\qquad \tilde{u}_1'(y_*)=\frac{1}{y_*}-1.
\label{localhunterboundaryconds}
\end{equation}
then $(\tilde{\rho},\tilde{u})$ is analytic at $y_*$. This is a corollary of the following Lemma.

\begin{lem}
\begin{enumerate}[wide,label=(\alph*)]
\item
If one fixes $y_*\in (\frac{1}{2},\frac{3}{2})$ then there exists a real analytic solution to the self-similar Euler-Poisson system, $(\tilde{\rho}_0,\tilde{u}_0)$, defined on a nbhd of $y_*$ satisfying the boundary conditions \eqref{localhunterboundaryconds}.
\item
Let $(\tilde{\rho}_1,\tilde{u}_1)$ be a $C^1$ solution to the self-similar Euler-Poisson system defined on an interval containing $y_*$ satisfying the Hunter boundary conditions \eqref{localhunterboundaryconds} then there exists an interval containing $y_*$ s.t. $(\tilde{\rho}_1,\tilde{u}_1)=(\tilde{\rho}_0,\tilde{u}_0)$.
\end{enumerate}
\label{locallem}
\end{lem}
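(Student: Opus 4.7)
The plan is as follows. For part (a), I would construct $(\tilde{\rho}_0,\tilde{u}_0)$ directly as a power series, in the spirit of the analytic local well-posedness theory of \cite{ghj}. With the ansatz $\tilde{\rho}_0 = \sum_{n\ge 0} a_n(y-y_*)^n$, $\tilde{u}_0 = \sum_{n\ge 0} b_n(y-y_*)^n$ and $(a_0,a_1,b_0,b_1)$ fixed by \eqref{localhunterboundaryconds}, substitution into \eqref{selfsimeq2} yields a recursion $\calM_n(a_n,b_n)^T = R_n(a_0,b_0,\ldots,a_{n-1},b_{n-1})$ for $n\ge 2$, where $\calM_n$ is an explicit $2\times 2$ matrix depending on $y_*$ and $n$. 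A direct calculation shows $\det \calM_n\neq 0$ for $n\ge 2$ (this is the non-resonance condition that singles out the Hunter branch), so the recursion uniquely determines $(a_n,b_n)$. Standard majorant estimates then give a positive radius of convergence.

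For part (b), set $v:=(\tilde{\rho}_1-\tilde{\rho}_0,\tilde{u}_1-\tilde{u}_0)^T$, which is $C^1$ with $v(y_*)=v'(y_*)=0$ by the matching boundary data. Subtracting the two copies of \eqref{selfsimeq2} and using the analyticity of $(\tilde{\rho}_0,\tilde{u}_0)$, one arrives at an equation of the form
\[
A(y)v' + B(y)v = N(y,v),
\]
with $A,B$ analytic at $y_*$, $\det A(y_*)=(\tilde{u}_0(y_*)+y_*)^2-1 = 0$, and $N$ at least quadratic in $v$. Multiplying by $\operatorname{adj}(A)$ extracts the simple factor $s:=y-y_*$ from $\det A$, yielding the regular-singular standard form
\[
v'(s) = \frac{1}{s} A_0(s) v(s) + \tilde{N}(s,v),
\]
with $A_0$ analytic at $0$ and $\tilde{N}$ quadratic in $v$.

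The crux is a Frobenius analysis of $A_0(0)$. By a direct $2\times 2$ calculation using \eqref{localhunterboundaryconds}, its eigenvalues come out to be $\{\kappa_1,\kappa_2\}$ where $\kappa_1$ is a non-negative integer (corresponding to the analytic family from part (a), i.e.\ the rescaling mode) while the other fundamental solution has leading behavior --- either a non-integer power $s^{\kappa_2}$ with $\kappa_2<1$, a strictly negative power, or a logarithmic term --- that fails to lie in $C^1$ across $s=0$. The analyticity of $A_0$ lets one invoke standard Frobenius theory to produce a principal fundamental matrix $\Phi(s)$ with one analytic column and one non-$C^1$ column, together with explicit asymptotics for $\Phi(s)$ and $\Phi(s)^{-1}$ as $s\to 0$.

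The proof then concludes via the Volterra integral equation
\[
v(s) = \Phi(s) c + \Phi(s)\int_0^s \Phi(s')^{-1} A(s')^{-1} N(s',v(s'))\, ds'.
\]
The $C^1$ hypothesis on $v$ forces the component of $c$ along the non-analytic column to vanish, while the vanishing of $v$ and $v'$ at $s=0$ forces the remaining component of $c$ to vanish as well, so $c=0$. A contraction-mapping argument for this Volterra equation in a small ball of $C^1$ functions vanishing to first order at $y_*$, set up in the spirit of the Banach spaces of Section \ref{extsection}, then gives $v\equiv 0$ on a neighborhood of $y_*$. The main obstacle is the indicial computation together with the verification that the non-analytic mode is genuinely incompatible with $C^1$ regularity; once these are in hand, the remaining fixed-point argument is a routine perturbation of the analytic theory.
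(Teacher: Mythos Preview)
Your proposal is correct and follows essentially the same route as the paper. For part (a) the paper likewise appeals to the analytic local theory of \cite{ghj}; for part (b) the paper linearizes about $(\tilde\rho_0,\tilde u_0)$, carries out precisely the Frobenius analysis you describe (the indicial eigenvalues come out to be $0$ and $y_*-1$, so the non-analytic column is $|y-y_*|^{y_*-1}$ or logarithmic at $y_*=1$, hence never $C^1$), builds the resolvent $T_0$ from $y_*$, and closes with the quadratic estimate $\|v\|\le C_1C_2\|v\|^2$ on a short interval rather than a full contraction---but this is only a cosmetic difference from your fixed-point formulation.
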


The proof of part (a) essentially follows from the proof of Theorem 2.10 of \cite{ghj}. The only difference is that the matrix $\mathcal{A}_N$ must be replaced by $\mathcal{A}_N^{H}$ (defined in Remark 2.5), leading to slight changes in inequalities (2.49) and (2.50). The rest of the proof carries over.

 To prove part (b), we start with the following formulation of the ODEs:
\[
\mqty(\tilde{u}+y & \tilde{\rho}\\ \frac{1}{\tilde{\rho}} & \tilde{u}+y)\mqty(\tilde{\rho}'\\ \tilde{u}')+2\tilde{\rho}(\tilde{u}+y)\mqty(\frac{1}{y}\\ 1)=0.
\]
We then linearize the equation about $(\tilde{\rho}_0,\tilde{u}_0)$ to get the operator
\[
L_0(\rho,u)=\mqty(\tilde{u}_0+y & \tilde{\rho}_0\\ \frac{1}{\tilde{\rho}_0} & \tilde{u}_0+y)\mqty(\rho\\ u)'+\mqty(u & \rho\\ -\frac{\rho}{\tilde{\rho}_0^2} & u)\mqty(\tilde{\rho}_0'\\ \tilde{u}_0')+2(\rho(\tilde{u}_0+y)+\tilde{\rho}_0u)\mqty(\frac{1}{y}\\ 1).
\]
which we may rewrite as
\[
L_0(\rho,u)=
\mqty(\tilde{u}_0+y & \tilde{\rho}\\ \frac{1}{\tilde{\rho}_0} & \tilde{u}_0+y)\mqty(\rho\\ u)'+\mqty(\tilde{u}_0'+2\frac{\tilde{u}_0+y}{y} & \tilde{\rho}_0'+\frac{2\tilde{\rho}_0}{y}\\ -\frac{\tilde{\rho}_0'}{\tilde{\rho}_0^2}+2(\tilde{u}_0+y) & \tilde{u}_0'+2\tilde{\rho}_0)\mqty(\rho\\ u).
\]
We then write the equation satisfied by their difference $(\rho,u)=(\tilde{\rho}_1-\tilde{\rho}_0,\tilde{u}_1-\tilde{u}_0)$ as
\begin{equation}
L_0(\rho, u)+N_0(\rho,u)=0.
\label{diffeq}
\end{equation}
where
\begin{equation}
N_0(\rho,u)=\mqty(u \rho' + \rho u'\\ -\frac{\rho' \rho}{\tilde{\rho}_0(\tilde{\rho}_0+\rho)}+\frac{\rho^2 \tilde{\rho}_0'}{\tilde{\rho_0}^2(\tilde{\rho}_0+\rho)} + u u')+2\rho u\mqty(\frac{1}{y}\\ 1).
\label{N0defin}
\end{equation}
We consider the solvability of
\[
L_{\tilde{\rho}_0,\tilde{u}_0}\mqty(\rho\\ u)=\mqty(R\\ U).
\]
To simplify the ensuing linear algebra we multiply on the left by $\mqty(y_* & -1\\ y_* & 1)$, and then Taylor expand the coefficients in $y-y_*$. When we do this, we get
\begin{multline*}
\mqty((y_*-2)(y-y_*)+O((y-y_*)^2) & (1-\frac{4}{y_*})(y-y_*)+O((y-y_*)^2)\\ 2y_*+O(y-y_*) & 2+O(y-y_*))\mqty(\rho'\\ u')\\
+\mqty(-2 +O(y-y_*) & 2-\frac{4}{y_*} + O(y-y_*)\\ 8-2y_* + O(y-y_*) & \frac{2}{y_*}+ O(y-y_*))\mqty(\rho\\ u)=\mqty(y_* R- U\\ y_* R + U)
\end{multline*}
From the first line, if we substitute $y=y_*$, we get the constraint $-2\rho(y_*)+2-\frac{4}{y_*}u(y_*)=y_*R(y_*)-U(y_*)$. If we divide by $y-y_*$, we then see that in order to have $\rho,u\in C^1$ we need
\[
\frac{y_*(R-R(y_*))-(U-U(y_*))}{y-y_*}
\]
to be continuous at $y_*$. This motivates the following spaces.
\begin{defin} (function spaces)
We let $I_\epsilon=(y_*-\epsilon,y_*+\epsilon)$. We then define our nonlinear space to be
\begin{multline*}
\calN_{\epsilon}=\{(R,U)\in C(I_{\epsilon})\colon \frac{y_* R-U}{y-y_*}\in C(I_{\epsilon}),\\
 R(y_*)=U(y_*)=(y_*R-U)'(y_*)=0,\quad \norm{(R,U)}_{\calN_{\epsilon}}<\infty\}
\end{multline*}
where
\[
\norm{(R,U)}_{\calN_{\epsilon}}=\sup_{y\in I_\epsilon}\abs{R}+\abs{U}+\abs{\frac{y_* R-U}{y-y_*}}.
\]
We then define our linear space to be
\[
\calX_{\epsilon}=\{(\rho,u)\in C^1(I)\colon \rho(y_*)=\rho'(y_*)=\rho(y_*)=\rho'(y_*)=0,\quad \norm{(\rho,u)}_{\calX_{\epsilon}}<\infty\}
\]
where
\[
\norm{(\rho,u)}_{\calX_{\epsilon}}=\sup_{y\in I_\epsilon} \abs{\rho}+\abs{\rho'}+\abs{u}+\abs{u'}
\]
\end{defin}

\begin{lem} (linear continuity)
If epsilon is sufficiently small then there exists a resolvant $T_0$ and constant $C_1>0$ (independent of $\epsilon$) so that $L_0T_0=\id$ and if $(R,U)\in \calN_{\epsilon}$ then
\[
(\rho,u):=T_0(R,U)\in \calX_{\epsilon}\qquad \text{and}\qquad \norm{(\rho,u)}_{\calX}\leq C_1 \norm{(R,U)}_{\calN_{\epsilon}}.
\]
\end{lem}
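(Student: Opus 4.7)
The plan is to parallel the construction in Lemma \ref{explicitlemma} (the analogous result for the exterior linearized operator), adapting it to the Fuchsian structure of $L_0$ at the regular singular point $y_*$. The setup in the excerpt already identifies that after multiplication by $\mqty(y_* & -1\\ y_* & 1)$, the first row of the system is singular while the second row is regular; this is structurally the same as the model ODE \eqref{modelequation} from the introduction and parallels the structure of $L$ at $z=1$ studied in Lemma \ref{explicitlemma}. The three constraints defining $\calN_\epsilon$ — namely $R(y_*)=U(y_*)=(y_*R-U)'(y_*)=0$ — are precisely the analogue of \eqref{nonlinconstraints}, obtained by expanding the singular row to orders $0$ and $1$ in $(y-y_*)$.

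First I would construct a fundamental matrix for $L_0 W = 0$. Since $(\tilde{\rho}_0,\tilde{u}_0)$ is analytic by part (a), Frobenius theory applies: writing the transformed system in standard form $W' + \frac{A(y)}{y-y_*} W = B(y) W$ with $A(y), B(y)$ analytic on $I_\epsilon$, the indicial polynomial $\det(A(y_*) - \alpha I) = 0$ can be computed explicitly from \eqref{localhunterboundaryconds} together with the second-order Taylor coefficients of $(\tilde{\rho}_0,\tilde{u}_0)$ at $y_*$ (which are determined by differentiating \eqref{selfsimeq2}). This yields local analytic fundamental matrices $U_\pm(y)$ on $(y_*-\epsilon,y_*)$ and $(y_*,y_*+\epsilon)$ with controlled leading behavior. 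Given $U_\pm$, I would define the resolvent by variation of parameters in the style of \eqref{Rdef}:
\[
T_0(R,U)(y) := \int_{y_*}^y U_\pm(y)\, U_\pm(y')^{-1} \mqty(\tilde{u}_0+y' & \tilde{\rho}_0\\ 1/\tilde{\rho}_0 & \tilde{u}_0+y')^{-1} \mqty(R(y')\\ U(y'))\, dy',
\]
using $U_+$ on the right of $y_*$ and $U_-$ on the left. The norm bound $\norm{T_0(R,U)}_{\calX_\epsilon} \le C_1 \norm{(R,U)}_{\calN_\epsilon}$ then follows from a direct kernel estimate once the local behavior of $U_\pm$ is known.

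The main obstacle is verifying that $T_0(R,U)$ actually lies in $\calX_\epsilon$ — that is, that $\rho, u, \rho', u'$ all vanish at $y_*$. The inverse coefficient matrix has a simple pole at $y_*$ coming from the degeneracy of the leading matrix of $L_0$, and $U_\pm(y')^{-1}$ may contribute further singular factors if the indicial exponents are not nonnegative integers. The two vanishing conditions $R(y_*)=U(y_*)=0$ supply one order of cancellation and the extra differentiability condition $(y_*R-U)'(y_*)=0$ supplies the second, exactly as in the proof of Lemma \ref{explicitlemma}.e. If the two Frobenius exponents happen to differ by an integer, matching a $C^1$ solution across $y_*$ requires ruling out an unwanted logarithmic branch; this is handled by a resonance analysis analogous to the treatment of \eqref{near1asymptot}, exploiting that the background $(\tilde{\rho}_0,\tilde{u}_0)$ is already known to be analytic. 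Once these delicate cancellations are checked, the $\epsilon$-independent bound on $T_0$ is routine.
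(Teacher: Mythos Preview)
Your proposal is correct and follows essentially the same route as the paper: Frobenius analysis of the regular singular point $y_*$, construction of a fundamental matrix, and a variation-of-parameters resolvent integrated from $y_*$, with the three constraints in $\calN_\epsilon$ supplying the cancellation needed for $(\rho,u)\in\calX_\epsilon$.

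Two small points of comparison. First, the paper carries out the indicial computation explicitly: the residue matrix at $y_*$ has eigenvalues $0$ and $y_*-1$, so for $y_*\in(\tfrac12,\tfrac32)\setminus\{1\}$ the two exponents never differ by an integer and no logarithmic branch appears. Your worry about a resonance analysis ``analogous to \eqref{near1asymptot}'' is therefore misplaced here --- in the exterior problem the exponents at $z=1$ \emph{do} coincide, which is why logarithms enter there, but in the present lemma they do not (the case $y_*=1$ is simply set aside as non-generic). Second, the paper makes the cancellation mechanism concrete: the most singular row of $U^{-1}$ times the inverse leading coefficient is $|\eta|^{-y_*}(-\tfrac{y_*}{2},\tfrac12)+O(|\eta|^{1-y_*})$, which pairs with $(R,U)$ to give a multiple of $y_*R-U$; the conditions $R(y_*)=U(y_*)=(y_*R-U)'(y_*)=0$ then force this to be $o(|\eta|^{1-y_*})$, and after integration and multiplication by $U(y)$ one gets $(\rho,u)=o(\eta)$. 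Your heuristic ``two conditions give one order, the third gives the second'' is the right intuition but obscures that only the specific combination $y_*R-U$ sees the worst singularity.
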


\begin{proof}
As a first step towards defining $T_0$, we construct the homogeneous solutions. In particular, we rewrite $L_0(\rho,u)=0$ (dropping the subscripts of $\tilde{\rho}_0,\ut_0$) as
\begin{equation}
\mqty(\rho\\ u)'+\frac{1}{(\ut+y)^2-1}\mqty(\ut+y & -\rt\\ -\frac{1}{\rt} & \ut+y)\mqty(\ut'+2\frac{\ut+y}{y} & \rt'+\frac{2\rt}{y}\\ -\frac{\rt'}{\rt^2}+ 2(\ut+y) & \ut'+2\rt)\mqty(\rho\\ u)=0.
\label{rewrittenhomeq}
\end{equation}
We then let $\eta=y-y_*$ and Taylor expand in powers of $\eta$,
\begin{align*}
\frac{1}{(\ut+y)^2-1}&=\frac{y_*}{2\eta}+O(1)\\
\mqty(\ut+y & -\rt\\ -\frac{1}{\rt} & \ut+y)&=\mqty(1 & -\frac{1}{y_*}\\ -y_* & 1)+O(\eta)\\
\mqty(\ut'+2\frac{\ut+y}{y} & \rt'+\frac{2\rt}{y}\\ -\frac{\rt'}{\rt}+ 2(\ut+y) & \ut'+2\rt)&=\mqty(\frac{3}{y_*}-1 & \frac{1}{y_*}-\frac{1}{y_*^2}\\ 5-y_*& \frac{3}{y_*}-1)+O(\eta)
\end{align*}

Thus, we have that
\[
\mqty(\rho\\ u)'+\left(\frac{1}{\eta}\mqty(-1 & 1-\frac{2}{y_*}\\ y_* & 2-y_*)+O(1)\right)\mqty(\rho\\ u)=0.
\]
We note that the coefficient of $\frac{1}{y-y_*}$ has eigenvalues $0,y_*-1$, in particular if $y_*\neq 1$ (the generic case), the linearized system has a pair of homogeneous solutions which are $1,\abs{y-y_*}^{y_*-1}$ near $y=y_*$. The residue of the (negative of) the coefficient of the zeroth order term is
\[
\mqty(1 & \frac{2}{y_*}-1\\ -y_*& y_*-2)=\mqty(y_*-2 & -1\\ y_* & y_*)\mqty(0 & 0\\ 0 & y_*-1)\mqty(y_*-2 & -1\\ y_* & y_*)^{-1}.
\]
By analyticity of $(\rt,\ut)$ (here we assume $\epsilon$ is sufficiently small so that $I_{\epsilon}$ is contained in the radius of convergence of the series at $y_*$), it follows from the Frobenius method that there exists homogeneous solutions $(\rho_{h,0},u_{h,0}),(\rho_{h,1},u_{h,1})$ whose asymptotics as $y\to y_*$ are given by
\[
U(y):=\mqty(\rho_{h,0} & \rho_{h,1}\\ u_{h,0} & u_{h,1})=\mqty(y_*-2+O(\eta) & -\abs{\eta}^{y_*-1}+O(\abs{\eta}^{y_*})\\ y_*+O(\eta) & y_* \abs{\eta}^{y_*-1}+O(\abs{\eta}^{y_*})).
\]
We can then represent the principal matrix solution to $L_{0}(\rho,u)=0$ as
\begin{align*}
S_0(y,y')&:=U(y)U^{-1}(y')\mqty(\ut(y')+y' & \rt(y')\\ \frac{1}{\rt(y')} & \ut(y')+y')^{-1}\\
\end{align*}
We may then define our desired resolvant as
\[
T_0\mqty(R\\ U)(y):=\int_{y_*}^y S_0(y,y')\mqty(R\\ U)(y')dy'
\]
We can asymptotically expand
\[
U^{-1}(y)=\frac{1}{y_*^2-y_*}\mqty(y_* + O(\eta) & 1+O(\eta)\\ -y_*\abs{\eta}^{1-y_*}+O(\abs{\eta}^{2-y_*}) & (y_*-2)\abs{\eta}^{1-y_*}+O(\abs{\eta}^{2-y_*})),
\]
\[
U^{-1}\mqty(\ut(y')+y' & \rt(y')\\ \frac{1}{\rt(y')} & \ut(y')+y')^{-1}=\mqty(0 & 0\\ \frac{-y_*}{2}\abs{\eta}^{-y_*} & \frac{1}{2}\abs{\eta}^{-y_*})+\mqty(O(1) & O(1)\\ O(\abs{\eta}^{1-y_*}) & O(\abs{\eta}^{1-y_*})).
\]
In particular, if $\frac{y_*R-U}{\eta}$ is continuous, then we have that as $\eta\to 0$,
\[
U^{-1}\mqty(\ut(y')+y' & \rt(y')\\ \frac{1}{\rt(y')} & \ut(y')+y')^{-1}\mqty(R\\ U)=\mqty(o(\eta)\\ o(\abs{\eta}^{2-y_*}))
\]
and hence $(\rho,u):=T_0(R,U)$ are $o(\eta)$ as desired, and so we have that
\[
\rho(y_*)=\rho'(y_*)=u(y_*)=u'(y_*)=0.
\]
Using the asymptotics of the fundamental solutions it is straightforward to also check that $\norm{(\rho,u)}_{\calX_{\epsilon}}\lesssim \norm{(R,U)}_{\calN_{\epsilon}}$.
\end{proof}

\begin{lem}(nonlinear estimate)
We have (independent of $\epsilon$ sufficiently small) that there exists $C_2>0$ so that if $(\rho,u)\in \calX$ then 
\[
(R,U):=N_0(\rho,u)\in \calN\qquad \text{and}\qquad \norm{(R,U)}_{\calX}\leq C_2 \norm{(\rho,u)}^2.
\]
\end{lem}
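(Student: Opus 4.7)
The plan is to verify directly the three defining properties of $\calN_\epsilon$ for $(R,U) := N_0(\rho,u)$ and then establish the quadratic bound, exploiting the fourfold vanishing $\rho(y_*)=\rho'(y_*)=u(y_*)=u'(y_*)=0$ built into $\calX_\epsilon$. As a preliminary, since $\tilde{\rho}_0$ is real analytic with $\tilde{\rho}_0(y_*)=1/y_*>0$, shrinking $\epsilon$ ensures $\tilde{\rho}_0\geq c>0$ on $I_\epsilon$; restricting the estimate to a ball $\norm{(\rho,u)}_{\calX_\epsilon}\leq c/2$ (a harmless restriction for a quadratic estimate, and in any case required to make the denominators in $N_0$ meaningful) keeps $\tilde{\rho}_0+\rho$ bounded below, so every denominator in $N_0$ is uniformly controlled. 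The boundary conditions give the Taylor-type pointwise bounds $|\rho(y)|,|u(y)|\leq |y-y_*|\,\norm{(\rho,u)}_{\calX_\epsilon}$ and, sharper, the convergences $\rho(y)/(y-y_*)\to 0$ and $u(y)/(y-y_*)\to 0$ as $y\to y_*$, since $\rho',u'$ are continuous and vanish at $y_*$.

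First I would verify membership in $\calN_\epsilon$. Every term of $R=u\rho'+\rho u'+2\rho u/y$ and every term of $U$ contains at least one factor from $\{\rho,u\}$, so $R(y_*)=U(y_*)=0$. For the delicate constraint $(y_*R-U)'(y_*)=0$, expand
\begin{align*}
y_*R-U = y_*(u\rho'+\rho u') + \frac{2y_*\rho u}{y} + \frac{\rho\rho'}{\tilde{\rho}_0(\tilde{\rho}_0+\rho)} - \frac{\rho^2\tilde{\rho}_0'}{\tilde{\rho}_0^2(\tilde{\rho}_0+\rho)} - uu' - 2\rho u.
\end{align*}
Each summand is a product of at least two factors drawn from $\{\rho,\rho',u,u'\}$, with at least one drawn from $\{\rho,u\}$. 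Dividing by $y-y_*$ and letting $y\to y_*$, the factor $\rho/(y-y_*)$ or $u/(y-y_*)$ tends to $0$ while the remaining factor stays bounded and in fact also tends to $0$, so the entire ratio converges to $0$. Hence $y_*R-U$ is differentiable at $y_*$ with vanishing derivative, and $(y_*R-U)/(y-y_*)$ extends continuously to $I_\epsilon$.

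Next I would establish the quadratic norm bound. Applying $|\rho|,|u|\leq |y-y_*|\,\norm{(\rho,u)}$ and $|\rho'|,|u'|\leq \norm{(\rho,u)}$, together with the uniform lower bounds on $y$, $\tilde{\rho}_0$, and $\tilde{\rho}_0+\rho$, each term of $R$ and $U$ is bounded in absolute value by $\norm{(\rho,u)}^2$ up to a constant. For the non-standard seminorm $\sup_{I_\epsilon}|(y_*R-U)/(y-y_*)|$, each summand factorizes as a bounded coefficient times one of $\rho/(y-y_*)$ or $u/(y-y_*)$ (each bounded by $\norm{(\rho,u)}$) times another factor also bounded by $\norm{(\rho,u)}$; the $\rho^2/(y-y_*)$ summand even gains an extra $|y-y_*|\leq \epsilon$ of smallness. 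Summing over the six terms yields $\norm{(R,U)}_{\calN_\epsilon}\leq C_2\norm{(\rho,u)}_{\calX_\epsilon}^2$, which is the desired bound (the $\calX$ in the conclusion being a typo for $\calN$).

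The only real subtlety is the control of the third seminorm, which requires both the pointwise decay $|\rho|,|u|=O(|y-y_*|)$ and the strict convergence $\rho/(y-y_*),u/(y-y_*)\to 0$ at $y_*$. These properties are supplied precisely by the fourfold vanishing encoded in $\calX_\epsilon$, most crucially $\rho'(y_*)=u'(y_*)=0$, and this compatibility is exactly what was needed in the preceding linear lemma for solvability. No further obstruction is expected, and the Lipschitz/difference versions of this estimate (needed for contraction) would follow by the same quadratic bookkeeping.
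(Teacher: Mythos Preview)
Your proof is correct and follows essentially the same approach as the paper: factor each term of $N_0(\rho,u)$ so as to peel off one of $\rho/(y-y_*)$ or $u/(y-y_*)$, bound this by $\sup|\rho'|$ or $\sup|u'|$, and bound the remaining factor by the $\calX_\epsilon$ norm as well. The paper carries this out for $R/(y-y_*)$ and $U/(y-y_*)$ separately (slightly stronger than the combination $(y_*R-U)/(y-y_*)$ you treat), but the arguments are otherwise the same; your explicit remark about controlling the denominator $\tilde\rho_0+\rho$ via a smallness restriction, and your identification of the $\calX$/$\calN$ typo, are both accurate.
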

\begin{proof}
We recall \eqref{N0defin}, the definition of $N_0$, and decompose (dropping the subscripts of $\tilde{\rt}_0,\tilde{\ut}_0$)
\[
\frac{R}{y-y_*}=\frac{u}{y-y_*}\rho'+\frac{\rho}{y-y_*}u'+2\frac{\rho}{y-y_*}\frac{u}{y},
\]
\[
\frac{U}{y-y_*}=-\frac{\rho}{y-y_*}\frac{\rho'}{\tilde{\rho}_0(\rt+\rho)}+\frac{\rt}{y-y_*}\frac{\rho \rt'}{\rt^2(\rt+\rho)}+\frac{u}{y-y_*}u'+2\frac{\rho}{y-y_*}u.
\]
As $\rho,u\in C^1$, $\frac{\rho}{y-y_*}$, $\frac{u}{y-y_*}$ converge as $y\to y_*$, so we see that $\frac{R}{y-y_*},\frac{U}{y-y_*}$ are continuous at $y_*$. Similarly, we may bound $\frac{\rho}{y-y_*},\frac{u}{y-y_*}$ by $\rho',u'$ respectively and as each term is quadratic in $\rho,u$, we obtain
\[
\abs{\frac{R}{y-y_*}},\abs{\frac{U}{y-y_*}}\lesssim \norm{(\rho,u)}_{X_{\epsilon}}^2
\]
which implies the desired continuity bound.
\end{proof}

We now prove Lemma \ref{locallem}.b.
\begin{proof}

We rewrite \eqref{diffeq} as
\[
(\rho,u)=-T_0 N_0(\rho,u).
\]
As $(\rho,u)\in C^1$ and their first order Taylor coefficients vanish at $y_*$, there exists some small $\epsilon$ such that $\norm{(\rho,u)}_{\calX_{\epsilon}}\leq \frac{1}{2 C_1 C_2}$. We then have that
\[
\norm{(\rho,u)}_{\calX}\leq C_1C_2\norm{(\rho,u)}_{\calX}^2\leq \frac{1}{2}\norm{(\rho,u)}_{\calX}
\]
which implies that $\norm{(\rho,u)}_{\calX_{\epsilon}}=0$.
\end{proof}

We finally show analyticity at $y=0$. This is easier, and we proceed in a manner which is similar to the sonic point case. In particular, we show the following lemma.
\begin{lem}
\begin{enumerate}[wide,label=(\alph*)]
\item
If one fixes $\rho_0>0$, then there exists a real analytic solution to the self-similar Euler-Poisson system, $\tilde{\rho}_0,\tilde{u}_0$, defined on a nbhd of $0$ satisfying the boundary conditions
\begin{equation}
\tilde{\rho}_0(0)=\rho_0,\qquad \tilde{\rho}_0'(0)=0,\qquad \tilde{u}_0(0)=0,\qquad \tilde{u}_0(0)=-\frac{2}{3}y.
\label{originboundaryconds}
\end{equation}
\item
Let $(\tilde{\rho}_1,\tilde{u}_1)$ be a $C^1$ solution to the self-similar Euler-Poisson system defined on an interval containing $0$ satisfying the boundary conditions \eqref{originboundaryconds}. Then there exists an interval containing $0$ such that $(\tilde{\rho}_1,\tilde{u}_1)=(\tilde{\rho_0},\tilde{u}_0)$.
\end{enumerate}
\end{lem}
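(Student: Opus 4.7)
For part (a), the plan is to adapt the analytic local well-posedness argument of Guo-Had\v{z}i\'c-Jang \cite{ghj} (Theorem 2.10) to the regular endpoint $y=0$ rather than a sonic point. Substituting a power series ansatz into \eqref{selfsimeq2} and matching Taylor coefficients yields a recursion for the coefficients of $(\tilde\rho_0,\tilde u_0)$ which is non-degenerate at every order, because the leading coefficient matrix of the ODE has determinant $-1\neq 0$ at $y=0$. Convergence of the formal series then follows from the same majorant estimate as in loc. cit.; the argument is actually simpler here than at the sonic point because no sonic-degeneracy matrices of the type $\mathcal{A}_N, \mathcal{A}_N^H$ arise.

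For part (b), I would follow the strategy of the proof of Lemma \ref{locallem}.b. Setting $(\rho,u):=(\tilde\rho_1-\tilde\rho_0,\tilde u_1-\tilde u_0)$, the shared boundary data gives $\rho(0)=\rho'(0)=u(0)=u'(0)=0$, and the difference equation takes the form $L_0(\rho,u)+N_0(\rho,u)=0$, where $L_0$ is the linearization of \eqref{selfsimeq2} about $(\tilde\rho_0,\tilde u_0)$ and $N_0$ is a quadratic remainder analogous to \eqref{N0defin}. The qualitative difference from the sonic-point case is that the leading matrix $M(y):=\mqty(\tilde u_0+y & \tilde\rho_0\\ 1/\tilde\rho_0 & \tilde u_0+y)$ is invertible at $y=0$ (with $\det M(0)=-1$); the singularity of $L_0$ at $y=0$ comes solely from the $1/y$ factor in the non-local term $2\tilde\rho(\tilde u+y)/y$. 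Using $\tilde u_0(y)+y=\tfrac{y}{3}+O(y^2)$, a short direct computation shows that after left-multiplying by $M(y)^{-1}$ the equation assumes the standard regular-singular-point form
\[
\mqty(\rho\\ u)'+\left(\frac{1}{y}B_0+A_{\mathrm{reg}}(y)\right)\mqty(\rho\\ u)=M(y)^{-1}\mqty(R\\ U),\qquad B_0=\mqty(0 & 0\\ 0 & 2),
\]
so the indicial exponents are $0$ and $-2$. By the Frobenius method one obtains an analytic homogeneous solution with leading term $(1,0)^\top$ and a second linearly independent solution blowing up like $(0,y^{-2})^\top$ at the origin (possibly carrying a logarithmic correction, since the exponents differ by an integer, but this does not obstruct the construction below).

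From these homogeneous solutions I would assemble a principal matrix solution $S_0(y,y')$ and a resolvent $T_0(R,U)(y):=\int_0^y S_0(y,y')(R(y'),U(y'))dy'$ satisfying $L_0 T_0=\id$, define Banach spaces $\calX_\epsilon$ of $C^1$ pairs $(\rho,u)$ on $(-\epsilon,\epsilon)$ with $\rho(0)=\rho'(0)=u(0)=u'(0)=0$ and $\calN_\epsilon$ of continuous forcings subject to exactly those compatibility conditions at $y=0$ needed to kill the growing $y^{-2}$ (and possibly $y^{-2}\log y$) modes of the fundamental system, and then prove the linear bound $\|T_0(R,U)\|_{\calX_\epsilon}\leq C_1\|(R,U)\|_{\calN_\epsilon}$ together with the quadratic bound $\|N_0(\rho,u)\|_{\calN_\epsilon}\leq C_2\|(\rho,u)\|_{\calX_\epsilon}^2$. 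The fixed-point identity $(\rho,u)=-T_0 N_0(\rho,u)$ then gives $\|(\rho,u)\|_{\calX_\epsilon}\leq C_1 C_2\|(\rho,u)\|_{\calX_\epsilon}^2$, and since $(\rho,u)\in C^1$ vanishes to first order at $y=0$ the $\calX_\epsilon$-norm is $o(1)$ as $\epsilon\to 0$, forcing $(\rho,u)\equiv 0$ on a neighborhood of $0$. The main technical obstacle will be pinning down the precise compatibility conditions on $(R,U)$ that absorb the growing mode of the fundamental system; once those are identified, the quadratic vanishing of $N_0$ together with the first-order vanishing of $(\rho,u)$ at $y=0$ ensures $N_0(\rho,u)\in \calN_\epsilon$ automatically.
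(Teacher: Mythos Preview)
Your proposal is correct and follows essentially the same route as the paper. A couple of clarifications that the paper's proof makes explicit and that would sharpen your write-up: for part (a) the paper simply cites Theorem~2.12 of \cite{ghj}, which already treats the origin case directly (rather than adapting the sonic-point Theorem~2.10). For part (b) your identification of the regular-singular structure with residue $B_0=\begin{pmatrix}0&0\\0&2\end{pmatrix}$ and indicial exponents $0,-2$ is exactly right, and the paper carries out precisely the Frobenius/resolvent argument you outline. The ``main technical obstacle'' you flag turns out to be much milder than you anticipate: there are no logarithmic corrections (the fundamental system is $(1,0)^\top+O(y)$ and $(O(y^{-1}),y^{-2})^\top+O(y^{-1})$ to leading order), and the only compatibility conditions needed on the forcing are $R(0)=U(0)=0$. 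The point is that the row of $\tilde U^{-1}M^{-1}$ feeding the $y^{-2}$ mode already carries a factor $(y')^2$, so the growing mode is automatically tamed; the vanishing $R(0)=U(0)=0$ is what upgrades the output from $O(y)$ to $o(y)$ and hence into $\tilde{\mathcal X}_\epsilon$. With these identifications in hand, the linear and quadratic estimates and the absorption argument go through exactly as in the sonic-point case.
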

\begin{proof}
In this case part (a) follows from Theorem 2.12 of \cite{ghj}. For part (b), we consider the operators $L_0,N_0$ defined previously, and define the following spaces.
\[
\tilde{\calN_{\epsilon}}=\{(R,U)\in C([0,\epsilon)\colon R(0)=U(0)=0,\quad \norm{(R,U)}_{\tilde{\calN}_{\epsilon}}<\infty)\},
\]
\[
\norm{(R,U)}_{\tilde{\calN}_{\epsilon}}=\sup_{y\in [0,\epsilon)}\abs{R}+\abs{U},
\]
\[
\tilde{\calX_{\epsilon}}=\{(\rho,u)\in C^1([0,\epsilon))\colon \rho(0)=\rho'(0)=u(0)=u'(0)=0,\quad \norm{(\rho,u)}_{\tilde{\calX}_{\epsilon}}<\infty\},
\]
\[
\norm{(\rho,u)}_{\tilde{\calX_{\epsilon}}}=\sup_{y\in [0,\epsilon)}\abs{\rho}+\abs{\rho'}+\abs{u}+\abs{u'}.
\]
We then note that near $y=0$, we can Taylor expand the coefficient in \eqref{rewrittenhomeq} as
\[
\frac{1}{(\ut+y)^2-1}\mqty(\ut+y & -\rt\\ -\frac{1}{\rt} & \ut+y)\mqty(\ut'+2\frac{\ut+y}{y} & \rt'+\frac{2\rt}{y}\\ -\frac{\rt'}{\rt^2}+ 2(\ut+y) & \ut'+2\rt)=\mqty(0 & 0\\ 0 & \frac{2}{y})+O(1)
\]
and hence we may use the Frobenius method to construct fundamental solutions whose leading order asymptotics as $y\to 0$ are given by
\[
\tilde{U}=\mqty(\rho_{h,2} & \rho_{h,3}\\ u_{h,2} & u_{h,3})=\mqty(1 + O(y) & 0+O(\frac{1}{y})\\ 0+O(y) & \frac{1}{y^2} + O(\frac{1}{y})).
\]
We then define $\tilde{S}_0,\tilde{T_0}$ by
\[
\tilde{S_0}(y,y'):=\tilde{U}(y)\tilde{U}^{-1}(y')\mqty(\ut(y')+y' & \rt(y')\\ \frac{1}{\rt(y')} & \ut(y')+y')^{-1},\quad \tilde{T_0}\mqty(R\\ U)(y):=\int_0^y\tilde{S}_0(y,y')\mqty(R\\ U)(y')dy'.
\]
We can then asymptotically expand
\[
\tilde{U}^{-1}\mqty(\ut(y)+y & \rt(y)\\ \frac{1}{\rt(y)} & \ut(y)+y)^{-1}=\mqty(0 & \frac{1}{\rho_0}\\y^2 \rho_0 & 0)+\mqty(O(y) & O(y)\\ O(y^3) & O(y^3))
\]
from which it is evident that $\tilde{T}_0$ is bounded from $\tilde{\calN_{\epsilon}}$ to $\tilde{\calX}_{\epsilon}$ and there exists $\tilde{C}_1$ independent of $\epsilon$ such that if $(\rho,u)=\tilde{T_0}(R,U)$,
\[
\norm{(\rho,u)}_{\tilde{\calX}_{\epsilon}}\leq \tilde{C}_1 \norm{(R,U)}_{\tilde{\calN}_{\epsilon}}.
\]
Again, straightforward estimates using \eqref{N0defin} show that if $(\rho,u)\in \tilde{\calX}_{\epsilon}$, then $(R,U):=N_0(\rho,u)\in \tilde{\calN}_{\epsilon}$ and that there exists $\tilde{C}_2>0$ such that
\[
\norm{(R,U)}_{\tilde{\calN}_{\epsilon}}\leq \tilde{C}_2\norm{(\rho,u)}_{\tilde{\calX}_{\epsilon}}^2.
\]
We may then conclude the proof of part (b) as before, namely by defining $(\rho,u)=(\tilde{\rho}_1-\tilde{\rho}_0,\tilde{u}_1-\tilde{u}_0)$, noting that it satisfies
\[
(\rho,u)=-\tilde{T}_0N_0(\rho,u)
\]
then choosing $\epsilon$ sufficiently small so that $\norm{\rho,u}_{\tilde{\calX}_{\epsilon}}<\frac{1}{2 \tilde{C}_1\tilde{C}_2}$, and then deriving
\[
\norm{(\rho,u)}_{\tilde{\calX}_{\epsilon}}\leq \frac{1}{2}\norm{(\rho,u)}_{\tilde{\calX}_{\epsilon}}
\]
and concluding $(\rho,u)=0$.
\end{proof}
\section{Deferred proofs from Section \ref{extsection}}
\label{extleftovers}
\subsection{Proof of Lemma \ref{explicitlemma}}
\label{explicitlemmaproof}
\begin{proof}
We discuss how to explicitly solve the homogeneous ODE
\[
\mqty(z & z\\ \frac{1}{z} & z)\mqty(p'\\ \w')+\mqty(0 & 1\\ \frac{2}{z^2} & \frac{2}{z^2}+1)\mqty(p\\\w)=0.\\
\]
Our first step is to temporarily assume $z\neq 1$, and invert the first matrix to obtain
\[
\mqty(p'\\ \w')+\frac{1}{z(z^2-1)}\mqty(-2 & -2\\ 2 & z^2+1)\mqty(p\\\w)=0.
\]
We then let $q=p+\w$ and note that we have
\[
\mqty(p'\\ q')+\mqty(0 & -\frac{2}{z(z^2-1)}\\ \frac{-1}{z} & \frac{1}{z})\mqty(p\\ q)=0.
\]
If we then substitute $q=\frac{z(z^2-1)}{2}p'$ into the second row we get that $p$ solves
\begin{equation}
p''+\frac{4z^2-2}{z(z^2-1)}p'-\frac{2}{z^2(z^2-1)}p=0.
\label{symmetriceq}
\end{equation}
We then let $p(z)=f(z^2)$. In this case, if we let $x=z^2$, we have that $f(x)$ solves
\[
f''+\frac{5x-3}{2x(x-1)}f'-\frac{1}{2x^2(x-1)}f=0.
\]
We observe that this ODE is singular at $x=\{0,1,\infty\}$ with simple singularities, and so by classical ODE theory we can solve it using hypergeometric functions. It is convenient at this point to switch variables to $f(x)=g(1-\frac{1}{x})$, $g(\xi)$ satisfies
\begin{equation}
g''+\frac{\xi-2}{2\xi(\xi-1)}g'+\frac{1}{2\xi(\xi-1)}g=0.
\label{hypergeomeq}
\end{equation}
where $\xi=1-\frac{1}{x}$. We recognize this as the hypergeometric equation with $a=-\frac{\gamma}{2},b=-\frac{\bar{\gamma}}{2},c=1$, where
\[
\gamma=\frac{1}{2}+i\frac{\sqrt{7}}{2}.
\]
The unique solution to \eqref{hypergeomeq} which is smooth on $(-\infty,1)$ and satisfies the normalization condition $g_{1}(0)=1$ is
\begin{equation}
g_1(\xi)=F(-\frac{\gamma}{2},-\frac{\bar{\gamma}}{2},1,\xi),
\label{u1def}
\end{equation}
 where $F(a,b,c,\xi)$ denotes the principal branch of Gauss's hypergeometric function. By inverting the changes of variables, we recover the following solution to $L(p,\omega)=0$:
 \[
\mqty(p_{hom}(z)\\ \omega_{hom}(z)):=\mqty(1 & 0\\ -1 & \frac{z^2-1}{z^2})\mqty(g_1(1-\frac{1}{z^2})\\ g_1'(1-\frac{1}{z^2})).
 \]
From this representation, \eqref{homogeneoustaylor} follows immediately. This yields (a).

To construct $U_{\infty}$, we start by defining the following analytic functions for $\xi\in (0,1)$:
\[
g_3(\xi)=F(-\frac{\gamma}{2},-\frac{\bar{\gamma}}{2},-\frac{1}{2},1-\xi),\qquad g_{4}(\xi)=-\frac{2}{3}(1-\xi)^{3/2}F(1+\frac{\gamma}{2},1+\frac{\bar{\gamma}}{2},\frac{5}{2},1-\xi).
\]
We use these fundamental solutions to construct a fundamental matrix for \eqref{hypergeomeq},
\[
V_{\infty}(\xi):=\mqty(g_3(\xi) & g_4(\xi)\\ g_3'(\xi) & g_4'(\xi)).
\]
We note that the Wronskian $W_{\infty}:=\det V_{\infty}$ satisfies the following ODE and asymptotic expansion as $\xi\to 1$
\[
W_{\infty}'+\frac{\xi-2}{2\xi(\xi-1)}W_{\infty}=0,\qquad \qquad W_{\infty}=(1-\xi)^{1/2}+O((1-\xi)^{3/2}).
\]
Solving this ODE gives 
\[
W_{\infty}(\xi)=\frac{\sqrt{1-\xi}}{\xi},\qquad V_{\infty}^{-1}(\xi)=\frac{\xi}{\sqrt{1-\xi}}\mqty(g_4'(\xi) & -g_4(\xi)\\ -g_3'(\xi) & g_3(\xi)).
\]
We note that these are two linearly independent solutions to \eqref{hypergeomeq}. Thus, we may define on the following for $z\in (1,\infty)$
\[
U_{\infty}(z):=\mqty(1 & 0\\ -1 & \frac{z^2-1}{z^2})V_{\infty}(1-\frac{1}{z^2}).
\]
and it satisfies $LU_{\infty}=0$, and as $z\to \infty$. Taking the limit as $z\to \infty$ yields the stated asymptotics for $U_{\infty}$. Similarly, using the formula for $V_{\infty}^{-1}(\xi)$, one can straightforwardly compute the asymptotics for $U_{\infty}^{-1}$.
To construct $U_0$, we start by defining the following functions for $\xi\in (-\infty,0)$,
\[
g_a(\xi)=(-\xi)^{\gamma/2}F(-\frac{\gamma}{2},-\frac{\gamma}{2},1-i\frac{\sqrt{7}}{2},\frac{1}{\xi}),\qquad g_b=(-\xi)^{\bar{\gamma}/2}F(-\frac{\bar{\gamma}}{2},-\frac{\bar{\gamma}}{2},1+i\frac{\sqrt{7}}{2},\frac{1}{\xi}).
\]
Then, one lets
\[
g_5=\frac{g_a+g_b}{2},\qquad g_6=-\frac{g_a-g_b}{2i}.
\]
These are now real valued. One then defines for $\xi\in (-\infty,0)$,
\[
V_0(\xi)=\mqty(g_5(\xi) & g_{6}(\xi)\\ g_{5}'(\xi) & g_{6}'(\xi)),\qquad W_0=\det V_{0}.
\]
We obtain the same ODE for the Wronskian with the boundary condition that as $\xi\to -\infty$,
\[
W_{0}=\frac{\sqrt{7}}{4}(-\xi)^{-1/2}+O((-\xi)^{-3/2}).
\]
Solving the ODE for with this boundary condition gives
\[
W_0=-\frac{\sqrt{7}}{4}\frac{\sqrt{1-\xi}}{\xi},\qquad V_{0}^{-1}(\xi)=\frac{-4}{\sqrt{7}}\frac{\xi}{\sqrt{1-\xi}}\mqty(g_6'(\xi) & -g_6(\xi)\\ -g_5'(\xi) & g_5(\xi)).
\]
Thus, we may define for $z\in (0,1)$
\[
U_0(z):=\mqty(1 & 0\\ -1 & \frac{z^2-1}{z^2})V_0(1-\frac{1}{z^2}).
\]
which satisfies $L U_0(z)=0$. As with before, we obtain the asymptotics for $U_0$ and $U_0^{-1}$. This concludes the proof of (b). 

For part (c), we first note that by the connection formulae for hypergeometric functions (equation 5.10.12 of \cite{olver} for instance) we see that for $\xi\in (0,1)$,
\[
g_1(\xi)=\frac{\Gamma(\frac{3}{2})}{\Gamma(1+\frac{\gamma}{2})\Gamma(1+\frac{\bar{\gamma}}{2})}g_3(\xi)-\frac{3}{2}\frac{\Gamma(-\frac{3}{2})}{\Gamma(-\frac{\gamma}{2})\Gamma(-\frac{\bar{\gamma}}{2})}g_4(\xi).
\]
If we let $\mu_3=\frac{\Gamma(\frac{3}{2})}{\Gamma(1+\frac{\gamma}{2})\Gamma(1+\frac{\bar{\gamma}}{2})}$ and $\mu_4=-\frac{3}{2}\frac{\Gamma(-\frac{3}{2})}{\Gamma(-\frac{\gamma}{2})\Gamma(-\frac{\bar{\gamma}}{2})}$, then by linearity we have
\[
\mqty(p_1\\ \w_1)=U_\infty\mqty(\mu_3\\\mu_4),
\]
yielding the stated asymptotics for $(p_{hom},\omega_{hom})$. By the another connection formula (equation 5.10.14 of \cite{olver} for instance), for $\xi\in (-\infty,0)$, we have
\[
g_1(\xi)=\frac{\Gamma(1)\Gamma(-\frac{\bar{\gamma}-\gamma}{2})}{\Gamma(-\frac{\bar{\gamma}}{2})\Gamma(1+\frac{\gamma}{2})}g_a(\xi)+\frac{\Gamma(1)\Gamma(-\frac{\gamma-\bar{\gamma}}{2})}{\Gamma(-\frac{\gamma}{2})\Gamma(1+\frac{\bar{\gamma}}{2})}g_b(\xi).
\]
We define real constants $\mu_5,\mu_6$ by 
\[
\frac{\Gamma(1)\Gamma(-\frac{\bar{\gamma}-\gamma}{2})}{\Gamma(-\frac{\bar{\gamma}}{2})\Gamma(1+\frac{\gamma}{2})}=\frac{\mu_5}{2}+i\frac{\mu_6}{2},
\]
so this becomes
\[
g_1(\zeta)=\mu_5 g_5(\zeta)+\mu_6 g_6(\zeta).
\]
By linearity, we have
\[
\mqty(p_{hom}\\\w_{hom})=U_0\mqty(\mu_5\\ \mu_6).
\]
We then compute that if $c_1:=\sqrt{\mu_5^2+\mu_6^2}$, then $c_1\neq 0$, which yields the stated asymptotics for $(p_{hom},\omega_{hom})$ for some $d_1\in \R$ finishing the proof of (c).

For part (d), we start by defining, for $\xi\in (0,1)$,
\[
g_{2,\infty}(\xi)=\mqty(1 & 0)V_{\infty}\mqty(-\frac{1}{2\mu_4}\\ \frac{1}{2\mu_3}).
\]
We note that
\[
\mqty(g_1 & g_{2,\infty}\\ g_{1}' & g_{2,\infty}')=V_{\infty}\mqty(\mu_3 & -\frac{1}{2\mu_4}\\ \mu_4 & \frac{1}{2\mu_3}).
\]
and take a determinant to derive
\[
g_1 g_{2,\infty}'-g_{1}'g_{2,\infty}=\frac{\sqrt{1-\xi}}{\xi}.
\]
We then note that because $g_{1}=1+O(\xi)$ as $\xi\to 0$, in this regime we can rearrange the equation as
\[
(\frac{g_{2,\infty}}{g_1})'=\frac{1}{\xi}+(\frac{\sqrt{1-\xi}}{\xi g_1^2}-\frac{1}{\xi}).
\]
which is equivalent to
\[
(\frac{g_{2,\infty}-\log\abs{\xi}g_1}{g_{1}})'=\frac{\sqrt{1-\xi}-g_1^2}{\xi g_1^2}.
\]
Thus, for some constant $C$, we have
\[
g_{2,\infty}=g_{1}(\log\abs{\xi}+C+\int_{0}^{\xi}\frac{\sqrt{1-\xi'}-g_1(\xi')^2}{\xi'g_1(\xi')^2}).
\]
In particular, as $\xi\to 0$, for some constant $C$
\[
g_{2,\infty}=\log\abs{\xi}+C+O(\abs{\xi}\log\abs{\xi}),\qquad g_{2,\infty}'=\frac{1}{\xi}+O(\log\abs{\xi}).
\]
Thus, as $z\to 1$, we have
\[
V_{\infty}(1-\frac{1}{z^2})\mqty(\mu_3 & -\frac{1}{2\mu_4}\\ \mu_4 & \frac{1}{2\mu_3})=\mqty(1+O(z-1)& \log\abs{z-1}+C+\log(2)+O(\abs{z-1}\log\abs{z-1})\\ \frac{1}{2}+O(z-1) & \frac{1}{2(z-1)}+O(\log\abs{z-1})).
\]
Thus, as $z\to 1$, we have that
\[
U_{\infty}\mqty(\mu_3 & -\frac{1}{2\mu_4}\\ \mu_4 & \frac{1}{2\mu_3})=\mqty(1 & \log\abs{z-1}+C+\log(2)\\-1 & -\log\abs{z-1}+1-C-\log(2))+\mqty(O(z-1) & O(\abs{z-1}\log\abs{z-1})\\ O(z-1) & O(\abs{z-1}\log\abs{z-1})).
\]
Thus, if
\[
M_{\infty}=\mqty(\mu_3 & -\frac{1}{2\mu_4}\\ \mu_4 & \frac{1}{2\mu_3})\mqty(1 & -C-\log(2)+\frac{1}{2}\\ 0 & 1)
\]
we obtain the asymptotics for $U_{\infty}M_{\infty}$ claimed in \eqref{near1asymptot}. One can similarly define
\[
g_{2,0}(\xi)=\mqty(1 & 0)V_{\infty}\mqty(-\frac{1}{2\mu_6}\\ \frac{1}{2\mu_5})\frac{-4}{\sqrt{7}},
\]
and through a similar process construct $M_{0}$. In these cases, we have that
\[
\det(U_{\infty}M_{\infty})=\det(U_0M_0)=\frac{1}{z}
\]
and asymptotically expanding the cofactor form of the inverse gives (d).

For part (e), we note that for $z,z'>1$ we may express
\begin{align*}
S(z,z')\mqty(z' & z'\\ \frac{1}{z'} & z')^{-1}&=U_{\infty}(z)M_{\infty} (U_{\infty}M_{\infty})^{-1}(z')\mqty(z' & z'\\ \frac{1}{z'} & z')^{-1},
\end{align*}
and as $z'\to 1$, we can asymptotically expand using part (d) to obtain
\begin{equation}
(U_{\infty}M_{\infty})^{-1}(z')\mqty(z' & z'\\ \frac{1}{z'} & z')^{-1}=\mqty(\frac{1}{2(z-1)} & -\frac{1}{2(z-1)}\\ 0 & 0)+\mqty(O(\log\abs{z-1}) & O(\log\abs{z-1})\\ O(1) & O(1)).
\label{asymptoticcancellation}
\end{equation}
Thus, as $\frac{P-\Omega}{z-1},P,\Omega$ are continuous and $P(0)=\Omega(0)=0$, we have that as $z\to 1^+$
\[
\int_{1}^z(U_{\infty}M_{\infty})^{-1}(z')\mqty(z' & z'\\ \frac{1}{z'} & z')\mqty(P\\ \Omega)(z')=\mqty(o(\abs{z-1}\log\abs{z-1})\\ o(z-1)).
\]
and hence we see that $p,\omega=o(\abs{z-1}\log\abs{z-1})$ as $z\to 1^+$ and $p+\omega=o(\abs{z-1}^2\log\abs{z-1})$ (to be used shortly). In particular, $p,\omega$ are continuous on $[1,\infty)$. One argues similarly using $U_{0},M_0$ for $(0,1]$. Moreover, the function is $C^1$ on $(0,\infty)\setminus 1$ and satisfies $L(p,\omega)=P,\Omega$. For $z\neq 1$, we can invert the leading coefficient of $L$ to obtain
\begin{equation}
\mqty(p'\\ \omega')=\mqty(z & z\\ \frac{1}{z} & z)^{-1}\mqty(P\\ \Omega)-\frac{1}{z(z^2-1)}\mqty(-2 & -2\\ 2 & z^2+1)\mqty(p\\ \omega)
\label{rewrittenform}
\end{equation}
 and taking the limit as $z\to 1$ (using $p+\omega=o(\abs{z-1}^2\log\abs{z-1})$ to control the latter term) shows that $(p,\omega)\in C^1(0,\infty)$ and
\[
p(1)=p'(1)=\omega(1)=\omega'(1)=0.
\]
\end{proof}
\subsection{Proof of Lemma \ref{linearlemma}}
\label{linearlemmaproof}
\begin{proof}
We suppose $(P,\Omega)\in N_{y_0}$. We note that by Lemma \ref{explicitlemma}.e, it follows that $(p,\omega):=R(P,\Omega)$ is well-defined, is in $C^1[y_0,\infty)$ and satisfies
\[
p(1)=p'(1)=\omega(1)=\omega'(1)=0.
\]

By \eqref{asymptoticcancellation}, and the asymptotics of $U_{\infty}M_{\infty}$ as $z\to 0$, we from the same arguments as in the proof of Lemma \ref{explicitlemma}.e, now done quantitatively that
\[
\sup_{1\leq \abs{z}\leq 2} \abs{p}+\abs{\omega}+\abs{\frac{p+\omega}{z-1}}\lesssim \sup_{1\leq z\leq 2}\abs{P}+\abs{\Omega}+\abs{\frac{P-\Omega}{z-1}}.
\]
One then uses \eqref{rewrittenform} and these bounds to conclude
\[
\sup_{1\leq \abs{z}\leq 2} \abs{p'},\abs{\omega'}\lesssim \sup_{1\leq z\leq 2}\abs{P}+\abs{\Omega}+\abs{\frac{P-\Omega}{z-1}}.
\]
As with before, the argument on $[\frac{1}{2},1]$ is the same, so we get
\[
\sup_{\frac{1}{2}\leq \abs{z}\leq 2} \abs{p'},\abs{\omega'}\lesssim \sup_{\frac{1}{2}\leq z\leq 2}\abs{P}+\abs{\Omega}+\abs{\frac{P-\Omega}{z-1}}.
\]

For $z\geq 2$, we express \eqref{Rdef} as
\begin{multline*}
\mqty(p\\ \omega)=U_{\infty}(z)\left( M_{\infty}\int_{1}^2 (U_{\infty}M_{\infty})^{-1}(z')\mqty(z' & z'\\ \frac{1}{z'} & z')^{-1}\mqty(P\\ \Omega)+\int_{2}^{z}U_{\infty}^{-1}(z')\mqty(z' & z'\\ \frac{1}{z'} & z')^{-1}\mqty(P\\ \Omega)(z')dz'\right).
\end{multline*}
As in the control near $z=1$, the first integral is controlled by $\sup_{1\leq z\leq 2}\abs{P}+\abs{\Omega}+\abs{\frac{P-\Omega}{z-1}}$. For the second integral, we then note that by Lemma \ref{explicitlemma}.b, we have that as $z\to \infty,$
\[
U_{\infty}^{-1}(z)\mqty(z & z\\ \frac{1}{z} & z)^{-1}=\mqty(\frac{1}{z} & -\frac{1}{z}\\ 0 & 1)+\mqty(O(\frac{1}{z^3}) & O(\frac{1}{z^3})\\ O(\frac{1}{z^2}) & O(\frac{1}{z^2}))
\]
so that for $z'\geq 2$,
\[
\abs{U_{\infty}^{-1}(z')\mqty(z' & z'\\ \frac{1}{z'} & z')^{-1}\mqty(P\\ \Omega)}\lesssim \frac{1}{(z')^2}\left(\sup_{z\geq 2}\abs{zP}+\abs{z^2 \Omega}\right),
\]
so the second integral appearing above converges as $z\to \infty$ by $\sup_{z\geq 2}\abs{zP}+\abs{z^2 \Omega}$. By the asymptotics of $U_{\infty}(z)$ as $z\to \infty$, we see that
\[
\sup_{z\geq 2}\abs{p}+\abs{z \omega}\lesssim \sup_{1\leq z\leq 2}\abs{P}+\abs{\Omega}+\abs{\frac{P-\Omega}{z-1}}+\sup_{z\geq 2}\abs{zP}+\abs{z^2 \Omega},
\]
as well as the qualitative observation that $\lim_{z\to \infty}p$, $\lim_{z\to \infty} z\omega$ exist. For $z\leq \frac{1}{2}$, we use a similar integral formulation, namely
\begin{multline*}
\mqty(p\\ \omega)=-U_{0}(z)\left( M_{0}\int_{\frac{1}{2}}^1 (U_{0}M_{0})^{-1}(z')\mqty(z' & z'\\ \frac{1}{z'} & z')^{-1}\mqty(P\\ \Omega)\right.\\
\left.+\int_{z}^{\frac{1}{2}}U_{\infty}^{-1}(z')\mqty(z' & z'\\ \frac{1}{z'} & z')^{-1}\mqty(P\\ \Omega)(z')dz'\right).
\end{multline*}
 and to control the second integral we compute that as $z\to 0$,
\[
U_{0}^{-1}(z)\mqty(z & z\\ \frac{1}{z} & z)^{-1}=\mqty(-z^{-1/2}\sin(\fr\log(z)) & 0\\ z^{-\frac{1}{2}}\cos(\fr \log(z)) & 0)+\mqty(O(z^{\frac{3}{2}}) & O(z^{\frac{3}{2}})\\ O(z^{\frac{3}{2}}) & O(z^{\frac{3}{2}}))
\]
so that for $y_0\leq z'\leq \frac{1}{2}$
\[
\abs{U_{0}^{-1}(z')\mqty(z' & z'\\ \frac{1}{z'} & z')^{-1}\mqty(P\\ \Omega)}\lesssim \frac{1}{(z')^{3/2}}\left(\sup_{y_0\leq z\leq \frac{1}{2}}\abs{z P}+\abs{z^3 \Omega}\right).
\]
By considering the asymptotics of $U_{0}(z)$ as $z\to 0$, and using $z^{-1/2}\leq y_0^{-1/2}$, we obtain
\[
\sup_{y_0\leq z\leq \frac{1}{2}}\abs{z^{1/2}p}+\abs{z^{1/2}\omega}\lesssim y_0^{-1/2}\left(\sup_{\frac{1}{2}\leq y\leq 1}\abs{P}+\abs{\Omega}+\abs{\frac{P-\Omega}{z-1}}+\sup_{y_0\leq z\leq \frac{1}{2}}\abs{zP}+\abs{z^3\Omega}\right).
\]
As a note, this regime, $z\leq \frac{1}{2}$, is where the factor of $y_0^{-1/2}$ in the Lemma arises.

In summary, we have shown so far that $p,\omega\in C^1([y_0,\infty))$, they satisfy the correct boundary conditions at the origin, $\lim_{z\to \infty}(p,z\omega)$ exists and
\[
\sup_{z\geq 1}\abs{p}+\abs{z \omega}+\sup_{y_0\leq z\leq 1}z^{1/2}(\abs{p}+\abs{\omega})\lesssim y_0^{-\frac{1}{2}} \norm{P,\Omega}_{N_{y_0}}.
\]
We then note that for $z\neq 1$, we may solve for $(p',\omega')$ using \eqref{rewrittenform}. Taking the limit as $z\to\infty$ shows that
\[
\sup_{z\geq 1}\abs{D_{\infty}p}+\abs{D_{\infty}(z\omega)}\lesssim y_0^{-\frac{1}{2}}\norm{P,\Omega}_{N_{y_0}}
\]
and $\lim_{z\to\infty}(D_{\infty}p,D_{\infty}\omega)$ exists. Looking at the limit as $z\to 0$ shows the final desired estimate, namely
\[
\sup_{y_0\leq z\leq 1}z^{3/2}(\abs{p'}+\abs{\omega'})\lesssim y_0^{-\frac{1}{2}}\norm{P,\Omega}_{N_{y_0}}.
\]
\end{proof}

\subsection{Proof of Lemma \ref{nonlinearlemma}}
\label{nonlinearlemmaproof}
\begin{proof}
As a first step, we will check that if $p,\omega\in C^{1}[y_0,\infty)$ with vanishing first order Taylor polynomials at $z=1$ then $P,\Omega:=N(p,\omega)$ satisfy $\frac{P-\Omega}{z-1}\in C^0[y_0,\infty)$. As a note, this is a bit subtle as $\frac{p'}{z-1},\frac{\omega'}{z-1}$ are not necessarily continuous. Before demonstrating this, we first rewrite $P-\Omega$ to make more explicit the smallness as we take $\epsilon$ to zero. Indeed, we start by computing
\begin{align*}
A_1-A_2&=\hat{\omega}'(z-\beta)\epsilon^2(\frac{\hat{p}}{y_*\alpha}-\frac{1}{y_*\alpha}-\hat{\omega}-1)\\
&+\epsilon \hat{p}'\left(\frac{(z-\beta)}{y_*\alpha}(y_*+\epsilon \hat{\omega})-\frac{y_*\alpha}{(z-\beta)}\frac{1}{1+\frac{\epsilon}{y_*\alpha}(\hat{p}-1)}\right)\\
&+\epsilon^2(\frac{\hat{p}}{y_*\alpha}-\frac{1}{y_*\alpha}-\hat{\omega}-1)\hat{\omega}-\frac{2(y_*\alpha)^2}{(z-\beta)^2}\left((1+\frac{\epsilon}{y_*\alpha}(\hat{p}-1))(y_*+\epsilon \hat{\omega})-1\right),\\
\epsilon(L_1-L_2)&=\epsilon(z-\frac{1}{z})\hat{p}'-\frac{2\epsilon}{z^2}(\hat{p}+\hat{\omega}),\\
-\epsilon(P-\Omega)&=\hat{\omega}'(z-\beta)\epsilon^2(\frac{\hat{p}}{y_*\alpha}-\frac{1}{y_*\alpha}-\hat{\omega}-1)+\hat{p}'\epsilon \left(\frac{(z-\beta)}{y_*\alpha}(y_*+\epsilon \hat{\omega})\right.
\\
&\qquad \left.-\frac{y_*\alpha}{(z-\beta)}\frac{1}{1+\frac{\epsilon}{y_*\alpha}(\hat{p}-1)}-\frac{(z^2-1)}{z}\right)+\epsilon^2(\frac{\hat{p}}{y_*\alpha}-\frac{1}{y_*\alpha}-\hat{\omega}-1)\hat{\omega}\\
&\qquad -\frac{2(y_*\alpha)^2}{(z-\beta)^2}\left((1+\frac{\epsilon}{y_*\alpha}(\hat{p}-1))(y_*+\epsilon \hat{\omega})-1\right)+\frac{2\epsilon }{z^2}(\hat{p}+\hat{\omega}).\\
\end{align*}
We then compute
\begin{multline*}
\frac{z-\beta}{y_*\alpha}(y_*+\epsilon \hat{\omega})-\frac{y_*\alpha}{z-\beta}\frac{1}{1+\frac{\epsilon}{y_*\alpha}(\hat{p}-1)}-\frac{z^2-1}{z}\\
=(z-1)(\frac{\beta}{y_*\alpha}+\frac{\beta}{z(z-\beta)})+\epsilon(1+\frac{z-1}{y_*\alpha})(\hat{\omega}+1)+\frac{\epsilon}{y_*\alpha}(1-\frac{z-1}{z-\beta})\frac{\hat{p}-1}{1+\frac{\epsilon}{y_* \alpha}(\hat{p}-1)}\\
=\epsilon\left((z-1)(\frac{\beta}{\epsilon y_*\alpha}+\frac{\beta}{\epsilon z(z-\beta)})+(1+\frac{z-1}{y_*\alpha})(\hat{\omega}+1)+\frac{1}{y_*\alpha}(1-\frac{z-1}{z-\beta})\frac{\hat{p}-1}{1+\frac{\epsilon}{y_* \alpha}(\hat{p}-1)}\right)
\end{multline*}
and also
\begin{multline*}
-\frac{2(y_*\alpha)^2}{(z-\beta)^2}\left((1+\frac{\epsilon}{y_*\alpha}(\hat{p}-1))(y_*+\epsilon \hat{\omega})-1\right)+\frac{2\epsilon }{z^2}(\hat{p}+\hat{\omega})\\
=-\frac{2(y_*\alpha)^2}{(z-\beta)^2}\left(\epsilon(\hat{\omega}+1)+\frac{\epsilon}{y_*\alpha}(\hat{p}-1)+\frac{\epsilon^2}{y_*\alpha}(\hat{p}-1)(\hat{\omega}+1)\right)+\frac{2\epsilon }{z^2}(\hat{p}+\hat{\omega})\\
=\epsilon^2\left(\frac{-2(y_*\alpha)}{(z-\beta)^2}\left((\hat{p}-1)(\hat{\omega}+1)-\frac{\beta}{\epsilon}\right)+2\hat{p}\frac{\beta}{\epsilon}\frac{z^2-2z+\beta)}{z^2(z-\beta)^2}+2\hat{\omega}\frac{\beta}{\epsilon}\frac{(z-1)(2z-\beta-\beta z)}{z^2(z-\beta)^2}\right).
\end{multline*}
Thus, if we define
\begin{align*}
M_1(a,b)&:=-(z-\beta)(\frac{a-1}{y_*\alpha}-(b+1)),\\
M_2(a,b)&:=-\left((z-1)(\frac{\beta}{\epsilon y_*\alpha}+\frac{\beta}{\epsilon z(z-\beta)})+(1+\frac{z-1}{y_*\alpha})(b+1)\right.\\
&\qquad\left.+\frac{1}{y_*\alpha}(1-\frac{z-1}{z-\beta})\frac{a-1}{1+\frac{\epsilon}{y_* \alpha}(a-1)}\right),\\
M_3(a,b)&:=-(\frac{a-1}{y_*\alpha}-(b+1))b,\\
M_4(a,b)&:=\frac{2(y_*\alpha)}{(z-\beta)^2}\left((a-1)(b+1)-\frac{\beta}{\epsilon}\right)\\
&\qquad-2a\left(\frac{\beta}{\epsilon}\frac{z^2-2z+\beta)}{z^2(z-\beta)^2}\right)-2b \frac{\beta}{\epsilon}\left(\frac{(z-1)(2z-\beta-\beta z)}{z^2(z-\beta)^2}\right).
\end{align*}
we have
\begin{equation}
P-\Omega=\epsilon \hat{\omega}'M_1(\hat{p},\hat{\omega})+\epsilon\hat{p}'M_2(\hat{p},\hat{\omega})+\epsilon M_3(\hat{p},\hat{\omega})+\epsilon M_4(\hat{p},\hat{\omega}).
\label{diffrewritten}
\end{equation}
In preparation for considering the behavior as $z\to \infty$, we can similarly factor a power of $\epsilon$ out from $P$ and $\Omega$ individually. To do this, we first compute
\begin{align*}
\epsilon(1+\frac{\epsilon}{y_*\alpha}(\hat{p}-1))(\hat{\omega}(z-\beta))'-\epsilon (z\hat{\omega})'
&=\epsilon^2\left(\frac{1}{y_*\alpha}(\hat{p}-1)(\hat{\omega} z)'-\frac{\beta}{\epsilon}(1+\frac{\epsilon}{y_*\alpha}(\hat{p}-1))\hat{\omega}'\right)\\
\frac{\epsilon}{y_*\alpha}\hat{p}'(y_*+\epsilon \hat{\omega})(z-\beta)-\epsilon z\hat{p}'
&=\frac{\epsilon^2\hat{p}'}{y_*\alpha}(\frac{z\beta}{\epsilon}-\frac{\beta}{\epsilon}+(\hat{\omega}+1)(z-\beta)),\\
\epsilon(1+\epsilon (\hat{\omega}+1))(\hat{\omega}(z-\beta))'-\epsilon (\hat{\omega}z)'
&=\epsilon^2\left((1+\epsilon(\hat{\omega}+1))\frac{-\beta}{\epsilon}\hat{\omega}'+(\hat{\omega}+1)(\hat{\omega}z)'\right),\\
\epsilon\hat{p}'\frac{\alpha y_*}{1-\beta}\frac{1}{1+\frac{\epsilon}{y_*\alpha}(\hat{p}-1)}-\frac{\epsilon}{z}\hat{p}'
&=\epsilon^2\hat{p}'\left(-\frac{\beta}{\epsilon}\frac{z-1}{(z-\beta)z(1+\frac{\epsilon}{y_*\alpha}(\hat{p}-1))}\right.\\
&\left.\qquad -\frac{\hat{p}-1}{y_*\alpha z(1+\frac{\epsilon}{y_*\alpha}(\hat{p}-1))}\right).
\end{align*}
We may thus define
\begin{align*}
M_5&:=-\left(\frac{1}{y_*\alpha}(\hat{p}-1)(\hat{\omega} z)'-\frac{\beta}{\epsilon}(1+\frac{\epsilon}{y_*\alpha}(\hat{p}-1))\hat{\omega}'\right)-\frac{\hat{p}'}{y_*\alpha}(\frac{z\beta}{\epsilon}-\frac{\beta}{\epsilon}+(\hat{\omega}+1)(z-\beta)),\\
M_6&:=-\left((y_*+\epsilon\hat{\omega})\frac{-\beta}{\epsilon}\hat{\omega}'+(\hat{\omega}+1)(\hat{\omega}z)'\right)\\
&\qquad -\hat{p}'\left(-\frac{\beta}{\epsilon}\frac{z-1}{(z-\beta)z(1+\frac{\epsilon}{y_*\alpha}(\hat{p}-1))}-\frac{\hat{p}-1}{y_*\alpha z(1+\frac{\epsilon}{y_*\alpha}(\hat{p}-1))}\right)-M_4(\hat{p},\hat{\omega}),
\end{align*}
and then we can write
\begin{equation}
P=\epsilon M_5,\qquad \Omega=\epsilon M_6.
\label{diffrewritten2}
\end{equation}
We now let $\zeta=z-1$, and note that as $\zeta\to 0$, we have
\[
\hat{p}=1+\zeta+o(\zeta),\qquad \hat{\omega}=-1+o(\zeta),\qquad \hat{p}'=1+o(1),\qquad \hat{\omega}'=o(1).
\]
We then use these expansions to compute that as $\zeta\to 0$
\begin{multline}
\epsilon M_1=O(\zeta),\qquad \epsilon M_2=-\frac{2\zeta}{y_*\alpha}-\frac{\epsilon}{y_*\alpha}\zeta+2\zeta+o(\zeta),\\
\epsilon M_3=\epsilon \frac{\zeta}{y_*\alpha}+o(\zeta),\qquad \epsilon M_4=\frac{2}{y_*\alpha}\zeta-2 \zeta+o(\zeta).
\label{z1boundaryconds}
\end{multline}
If we substitute these into \eqref{diffrewritten}, we see that $\frac{P-\Omega}{z-1}$ is continuous near $z=1$ and
\begin{equation}
(P-\Omega)(1)=(P-\Omega)'(1)=0.
\label{differenceboundaryconditions}
\end{equation}

We note that as $\zeta\to 0$, we have
\[
M_5=o(1)
\]
so by \eqref{diffrewritten2} $P(1)=0$, which along with \eqref{differenceboundaryconditions} ensures $P(1)=\Omega(1)=0$.

We next consider the behavior near $z=\infty$. In this case, we note that assume $p,\omega\in X_{y_0}$, we have that $\norm{\hat{p},\hat{\omega}}<\infty$, so there exists constants $p_0,p_1,\omega_0,\omega_1$ such that
\[
\hat{p}(z)=p_0+\frac{p_1}{z}+o(\frac{1}{z}),\qquad \hat{\omega}(z)=\frac{\omega_0}{z}+\frac{\omega_1}{z^2}+o(\frac{1}{z^2}),
\]
\[
\hat{p}'(z)=-\frac{p_1}{z^2}+o(\frac{1}{z^2}),\qquad \hat{\omega}'(z)=-\frac{\omega_0}{z^2}-\frac{2\omega_1}{z^3}+o(\frac{1}{z^3}).
\]
Using this, one computes that as $z\to\infty$,
\[
M_5=\frac{p_1}{(1-y_0)}\frac{1}{z}+o(\frac{1}{z}),
\]
\[
M_6=-\frac{y_*\beta \omega_0}{\epsilon z^2}+\frac{\omega_1}{z^2}-\frac{2y_*\alpha(p_0-1-\frac{\beta}{\epsilon})+2p_0\frac{\beta}{\epsilon}}{z^2}+o(\frac{1}{z^2}).
\]

In summary, these computations show we have shown so far that if $(p,\omega)\in X_{y_0}$ then $(P,\Omega)$ satisfies the correct boundary conditions at $z=1,\infty$ for $(P,\Omega)$ to be in $N_{y_0}$. Moreover, it is clear that $(P,\Omega)$ are continuous away from $z=1$. We next show the inequality \eqref{bddnesseq}. As a note, we use ``$\lesssim$" to denote inequalities which are independent of $y_0,\epsilon$.

To show \eqref{bddnesseq}, we first note that if $\norm{(p,\omega)}_{X_{y_0}}\lesssim 1$, then we have that $\norm{\hat{p},\hat{\omega}}_{X_{y_0}}\lesssim 1$. Thus, by making $\epsilon$ sufficiently small, we have that
\[
\abs{\frac{1}{1+\frac{\epsilon}{y_*\alpha}(\hat{p}-1)}}\lesssim 1
\]
and so the denominator causes no problems. Near zero it is useful to note that
\begin{equation}
\sup_{\frac{1}{2}\leq z\leq 2}\abs{\hat{p}}+\abs{\hat{p}'}+\abs{\hat{\omega}}+\abs{\hat{\omega}'}\lesssim 1.
\label{localcontrol}
\end{equation}
We then compute
\[
\frac{P-\Omega}{z-1}=\epsilon \frac{M_1}{z-1}\hat{\omega}'+\epsilon \frac{M_2}{z-1}\hat{p}'+\epsilon \frac{M_3}{z-1}+\epsilon \frac{M_4}{z-1}.
\]
We then note that as, for $i\in 1,2,3,4$ $M_i(1)=0$, we have
\[
\sup_{\frac{1}{2}\leq z\leq 2}\abs{\frac{M_i}{z-1}}\lesssim \sup_{\frac{1}{2}\leq z\leq 2}\abs{M_i'}.
\]
Moreover, for these indices $M_i$ does not contain $\hat{p}'$ or $\hat{\omega}'$, so by \eqref{localcontrol}
\[
\sup_{\frac{1}{2}\leq z\leq 2}\abs{M_i'}\lesssim 1.
\]
This gives
\[
\sup_{\frac{1}{2}\leq z\leq 2}\abs{\frac{P-\Omega}{z-1}}\lesssim \abs{\epsilon}.
\]
Similarly, we have by \eqref{localcontrol}
\[
\abs{M_5}\lesssim 1
\]
so (by writing $\Omega=P-(z-1)\frac{P-\Omega}{z-1}$)
\[
\sup_{\frac{1}{2}\leq z\leq 2}\abs{P}+\abs{\Omega}\lesssim \abs{\epsilon}.
\]
We now consider the regime $z\geq 2$ and note that as $\norm{(\hat{p},\hat{\omega})}\lesssim 1$ we have for some constants $p_0$, $\omega_0$,
\[
\abs{\hat{p}-p_0}\lesssim \frac{1}{z},\qquad \abs{\hat{p}'}\lesssim \frac{1}{z^2},\qquad \abs{\hat{\omega}-\frac{\omega_0}{z}}\lesssim \frac{1}{z^2},\qquad \abs{(z\hat{\omega})'}\lesssim \frac{1}{z^2}.
\]
This gives $\abs{M_5}\lesssim \frac{1}{z}$ and $\abs{M_6}\lesssim \frac{1}{z^2}$, so we have
\[
\sup_{z\geq 2}\abs{zP}+\abs{z^2\Omega}\lesssim \abs{\epsilon}.
\]
Finally, for $z\leq \frac{1}{2}$, we have from $\norm{(\hat{p},\hat{\omega})}_{X_{y_0}}\lesssim 1$ that
\[
\abs{\hat{p}},\abs{\hat{\omega}}\lesssim z^{-1/2},\qquad \abs{\hat{p}'},\abs{\hat{\omega}'}\lesssim z^{-3/2}.
\]
We also note by choosing $\epsilon$ small, 
\[
\abs{\beta}=\frac{\epsilon}{1+\epsilon-y_0} y_0\leq \frac{y_0}{2}\leq \frac{\abs{z}}{2}
\]
So we may also estimate $\abs{\frac{1}{z-\beta}}\lesssim \frac{1}{z}$. We also record $\abs{\frac{\beta}{\epsilon}}\lesssim z$. We thus can compute
\[
\abs{M_5}\lesssim \frac{1}{z},\qquad \abs{M_6}\lesssim \frac{1}{z^3}.
\]
Thus,
\[
\sup_{y_0\leq z\leq \frac{1}{2}} \abs{z P}+\abs{z^3 \Omega}\lesssim \epsilon
\]
concluding the proof of inequality \eqref{bddnesseq}.

We now consider differentiating the nonlinearity with respect to $\hat{p}$ and $\hat{\omega}$. Near $z=1$, we represent the formal derivative of $P-\Omega$ with respect to $p$, by computing
\begin{align*}
\left(\pdv{M_1}{a},\pdv{M_1}{b}\right)&=\left(-(z-\beta)\frac{1}{y_*\alpha},(z-\beta)\right)\\
\left(\pdv{M_2}{a},\pdv{M_2}{b}\right)&=\left(\frac{1}{y_*\alpha}(1-\frac{z-1}{z-\beta})\frac{1}{(1+\frac{\epsilon}{y_*\alpha}(a-1))^2},1+\frac{z-1}{y_*\alpha}\right)\\
\left(\pdv{M_3}{a},\pdv{M_3}{b}\right)&=\left(-\frac{b}{y_*\alpha},-\frac{a-1}{y_*\alpha}+2b+1\right)\\
\left(\pdv{M_4}{a},\pdv{M_4}{b}\right)&=\left(2\frac{y_*\alpha(b+1)}{(z-\beta)^2}-\frac{2\beta}{\epsilon}\frac{z^2-2z+\beta}{z^2(z-\beta)^2},\frac{2y_*\alpha(a-1)}{(z-\beta)^2}-\frac{2\beta}{\epsilon}\frac{(z-1)(2z-\beta-\beta z)}{z^2(z-\beta)^2}\right).
\end{align*}
This then gives the linear operators (partial derivatives of $M_i$ evaluated at $\hat{p}_0,\hat{\omega}_0$)
\begin{align*}
\pdv{P-\Omega}{p}:=\epsilon M_2 \dv{z}+\epsilon \hat{\omega}_0'\pdv{M_1}{a}+\epsilon \hat{p}_0'\pdv{M_2}{a}+\epsilon \pdv{M_3}{a}+\epsilon \pdv{M_4}{a}\\
\pdv{P-\Omega}{\omega}:=\epsilon M_1\dv{z}+\epsilon \hat{\omega}_0'\pdv{M_1}{b}+\epsilon \hat{p}_0'\pdv{M_2}{b}+\epsilon \pdv{M_3}{b}+\epsilon \pdv{M_4}{b}.
\end{align*}
We then note that if $\norm{\hat{p},\hat{\omega}}_{X_{y_0}}\lesssim 1$, we have that 
\begin{multline*}
\sup_{\frac{1}{2}\leq z\leq 2}\abs{\frac{M_2}{z-1}}+\abs{\hat{\omega}_0 \pdv{M_1}{a}}+\abs{\hat{p_0}'\pdv{M_2}{a}}+\abs{\pdv{M_3}{a}}+\abs{\pdv{M_4}{a}}\\
+\abs{\frac{M_1}{z-1}}+\abs{\hat{\omega}_0 \pdv{M_1}{b}}+\abs{\hat{p_0}'\pdv{M_2}{b}}+\abs{\pdv{M_3}{b}}+\abs{\pdv{M_4}{b}}\lesssim 1.
\end{multline*}
so for any $(p,\omega)\in X_{y_0}$,
\begin{align*}
\sup_{\frac{1}{2}\leq z\leq 2} \abs{\pdv{P-\Omega}{p}p}+\abs{\pdv{P-\Omega}{\omega} \omega}
&\lesssim \epsilon \sup_{\frac{1}{2}\leq z\leq 2}\abs{p'}+\abs{\frac{p}{z-1}}+\abs{\omega'}+\abs{\frac{\omega}{z-1}}\\
&\lesssim \epsilon \sup_{\frac{1}{2}\leq z\leq 2}\abs{p'}+\abs{\omega}'\\
&\lesssim \epsilon \norm{(p,\omega)}_{X_{y_0}}.
\end{align*}
We next estimate the derivative away from $z=1$. This is done by formally differentiating the formulae for $M_5,M_6$. When we do this, we obtain
\begin{align*}
\pdv{P}{p}&=\epsilon\left(-\frac{1}{y_*\alpha}(\frac{(z-1)\beta}{\epsilon}+(\hat{\omega}_0+1)(z-\beta))\dv{z}+\frac{(\hat{\omega}_0(z-\beta))'}{y_*\alpha}\right)\\
\pdv{P}{\omega}&=\epsilon\left((\frac{z-\beta}{y_*\alpha}(\hat{p}_0-1)-\frac{\beta}{\epsilon})\dv{z}+\frac{\hat{p}_0-1}{y_*\alpha}-\frac{(z-\beta)\hat{p}_0'}{y_*\alpha}\right).\\
\pdv{\Omega}{p}&=\epsilon\left(\left(\frac{\beta}{\epsilon}\frac{z-1}{(z-\beta)z(1+\frac{\epsilon}{y_*\alpha}(\hat{p}_0-1))}+\frac{\hat{p}_0-1}{y_*\alpha z(1+\frac{\epsilon}{y_*\alpha}(\hat{p}_0-1))}\right)\dv{z}-\right.\\
&\qquad \left.-\hat{p}_0'\left(\frac{\beta(z-1)}{y_*\alpha(z-\beta)z(1+\frac{\epsilon}{y_*\alpha}(\hat{p}_0-1))^2}+\frac{1+(-1+\frac{\epsilon}{y_*\alpha}(\hat{p}-1))}{y_*\alpha z(1+\frac{\epsilon}{y_*\alpha}(\hat{p}_0-1))^2}\right)-\pdv{M_4}{a}\right)\\
\pdv{\Omega}{\omega}&=-\epsilon\left((-\frac{\beta}{\epsilon}+(z-\beta)(\hat{\omega}_0+1))\dv{z}+(\hat{\omega}_0(z-\beta))'+\hat{\omega}_0+1-\frac{\hat{p}_0'}{y_*\alpha}(z-\beta)\right).
\end{align*}
Thus, under the hypotheses of the lemma, for $(p,\omega)\in X_{y_0}$, for $z\geq 2$, we have the desired bounds
\[
\abs{\pdv{P}{p}p}+\abs{\pdv{P}{\omega}\omega}\lesssim \epsilon(\abs{z p'}+\abs{z \omega'}+\abs{\frac{p}{z}}+\abs{\omega})\lesssim \frac{\epsilon }{z}\norm{(p,\omega)}_{X_{y_0}}
\]
and (noting that $\abs{z\omega'+\omega}\lesssim \abs{\frac{1}{z^2} D_{\infty} z\omega}$)
\[
\abs{\pdv{\Omega}{p}p}+\abs{\pdv{\Omega}{\omega} \omega}\lesssim \epsilon(\abs{\frac{p'}{z}}+\abs{\frac{p}{z^2}}+\frac{1}{z^2}\abs{D_{\infty}\omega}+\frac{1}{z^2}\abs{z \omega})\lesssim \frac{\epsilon}{z^2}\norm{(p,\omega)}_{X_{y_0}}.
\]
It finally remains to examine the region $y_0\leq z\leq \frac{1}{2}$. In this case, we have again
\[
\abs{\pdv{P}{p}p}+\abs{\pdv{O}{\omega}\omega}\lesssim \epsilon(\abs{z^{1/2}p'}+\abs{z^{1/2}\omega'}+\abs{z^{-1/2}p}+\abs{z^{-1/2}\omega})\lesssim \frac{\epsilon}{z}\norm{(p,\omega)}_{X_{y_0}}
\]
and
\[
\abs{\pdv{\Omega}{p}p+\pdv{\Omega}{\omega}\omega}\lesssim \epsilon(\abs{z^{-3/2} p'}+\abs{z^{-5/2} p}+\abs{z^{1/2}\omega'}+\abs{z^{-1/2}\omega })\lesssim \frac{\epsilon}{z^3}\norm{(p,\omega)}_{X_{y_0}}.
\]

These estimates prove the first part of \eqref{derivativebddnesseq}. As the coefficients of $N$ vary smoothly with respect to $\epsilon$, the first statement follows from the boundedness of $N$, and the fact that when we differentiate the representations \eqref{diffrewritten} and \eqref{diffrewritten2}, we can no longer factor out a factor of $\epsilon$, resulting in the second inequality of \eqref{derivativebddnesseq} not having a factor of $\epsilon$ in front. To show that the formal derivatives truly approximate $N$ to first order, it suffices to show
\begin{multline*}
\norm{N(\epsilon_1,p_1,\omega_1)-N(\epsilon_0,p_0,\omega_0)-(\pdv{N}{\epsilon},\pdv{N}{p},\pdv{N}{\omega})|_{(\epsilon_0,p_0,\omega_0)}\cdot (\epsilon_1-\epsilon_0,p_1-p_0,\omega_1-\omega_0)}_{N_{y_0}}\\
\leq C_2\left(\abs{\epsilon_1-\epsilon_0}^2+\abs{\epsilon_1-\epsilon_0}\norm{(p_1,\omega_1)-(p_0,\omega_0)}_{X_{y_0}}\right.\\
\left.+\max(\abs{\epsilon_0},\abs{\epsilon_1})\norm{(p_1,\omega_1)-(p_0,\omega_0)}_{X_{y_0}}^2\right).
\label{secondorderestimate}
\end{multline*}
which follows from straightforward estimates. Once this is done, \eqref{derivativelipschitz1} and \eqref{derivativelipschitz2} follow similarly. 
\end{proof}

\subsection{Proof of Lemma \ref{extconstructionlemmapre}}
\label{extconstructionlemmapreproof}
This is a classical fixed point argument.
\begin{proof}
We begin by proving (a). We let $(p_0,\omega_0)=(0,0)$. We suppose 
\begin{equation}
\abs{\epsilon}\leq \min(\frac{y_0^{1/2}}{C_2(\frac{1}{2})C_1}\frac{1}{10},y_0^{1/2}C_0(\frac{1}{2})).
\label{epsiloncondition}
\end{equation}
For $n\geq 1$, we define $(P_n,\Omega_n)=N(\epsilon, p_{n-1},\omega_{n-1})$, $(p_n,\omega_n)=R(P_n,\Omega_n)$. We inductively suppose $\norm{(p_n,\omega_n)}_{X_{y_0}}\leq y_0^{-1/2}\epsilon C_2(\frac{1}{2})C_1$. If this is true for $n=k-1$, then $\norm{(p_{k-1},\omega_{k-1})}_{X_{y_0}}<\frac{1}{2}$, so by Lemma \ref{nonlinearlemma}.a we have $\norm{(P_k,\Omega_k)}_{N_{y_0}}\leq \abs{\epsilon}C_2(\frac{1}{2})$. By Lemma \ref{linearlemma}, $\norm{(p_k,\omega_k)}\leq \abs{\epsilon}C_1C_2(\frac{1}{2})y_0^{-1/2}$ as desired.

In addition, for $k\geq 1$, we have that
\begin{align*}
\norm{(p_{k+1},\omega_{k+1})-(p_k,\omega_k)}_{X_{y_0}}&\leq C_1y_0^{-1/2}\norm{N(\epsilon, p_{k},\omega_{k})-N(\epsilon, p_{k-1},\omega_{k-1})}_{N_{y_0}}\\
&\leq C_1y_{0}^{1/2}C_2(\frac{1}{2})\abs{\epsilon}\norm{(p_{k},\omega_k)-(p_{k-1},\omega_{k-1})}_{X_{y_0}}\\
&\leq \frac{1}{2}\norm{(p_{k},\omega_k)-(p_{k-1},\omega_{k-1})}_{X_{y_0}},
\end{align*}
where to get from the first line to the second we used the uniform bound on $\pdv{N}{p},\pdv{N}{\omega}$ and the fundamental theorem of calculus. Thus, this iteration converges to a limit, $(p[\epsilon],\omega[\epsilon])$, with the desired bound. Uniqueness follows similarly from the contractive estimate, concluding the proof of part (a). We note that if $\epsilon=0$, as $N(0,0,0)=0$, we obtain $(p[0],\omega[0])=0$.

To prove part (b), we consider $\epsilon_0,\epsilon_1$ satisfying \eqref{epsiloncondition}. We let $(p_0,\omega_0)=(p[\epsilon_0],\omega[\epsilon_0])$, $(p_1,\omega_1)=(p[\epsilon_1],\omega[\epsilon_1])$, which are well defined by part (a) and satisfy
\[
(p_0,\omega_0)=RN(\epsilon_0,p_0,\omega_0),\qquad (p_1,\omega_1)=RN(\epsilon_1,p_1,\omega_1).
\]
If we subtract these identities we obtain, by the bounds on the partial derivatives of $N$,
\begin{align*}
\norm{(p_1,\omega_1)-(p_0,\omega_0)}_{X_{y_0}}&=\norm{R \left(N(\epsilon_1, p_1,\omega_1)-N(\epsilon_0, p_0,\omega_0)\right)}_{X_{y_0}}\\
&\leq C_1y_0^{-1/2}\norm{N(\epsilon_1,p_1,\omega_1)-N(\epsilon_0,p_0,\omega_0)}\\
&\leq C_1y_0^{-1/2}C_2\abs{\epsilon_1-\epsilon_0}+C_1y_0^{-1/2}C_2\max(\abs{\epsilon_1},\abs{\epsilon_2}) \norm{(p_1,\omega_1)-(p_0,\omega_0)}_{X_{y_0}}\\
&\leq C_1y_0^{-1/2}C_2(\epsilon_1-\epsilon_0)+\frac{1}{2}\norm{(p_1,\omega_1)-(p_0,\omega_0)}_{X_{y_0}}.
\end{align*}
By absorbing the second term into the right-hand side we conclude
\begin{equation}
\norm{(p_1,\omega_1)-(p_0,\omega_0)}_{X_{y_0}}\leq 2 C_1 C_2y_0^{-1/2}\abs{\epsilon_1-\epsilon_0}.
\label{lipschitzbound}
\end{equation}
Next, for each $\epsilon_0$, we consider the linearization of \eqref{integraleq}, namely
\begin{equation}
(p_{lin},\omega_{lin})=R\pdv{N}{\epsilon}|_{(\epsilon,p[\epsilon],\omega[\epsilon])}+R(\pdv{N}{p},\pdv{N}{\omega})|_{(\epsilon,p[\epsilon],\omega[\epsilon])}\cdot(p_{lin},\omega_{lin}).
\label{linearrecursion}
\end{equation}
and note that 
\[
\norm{R\pdv{N}{\epsilon}}_{X_{y_0}}\leq C_2(\frac{1}{2})C_1 y_0^{-1/2},\qquad \norm{R(\pdv{N}{p},\pdv{N}{\omega})|_{(\epsilon,p[\epsilon],\omega[\epsilon])}}_{X_{y_0}\to X_{y_0}}\leq C_2(\frac{1}{2})C_1 y_0^{-1/2}\epsilon \leq \frac{1}{2}
\]
so we can iteratively solve \eqref{linearrecursion} resulting in a solution satisfying
\begin{equation}
\norm{(p_{lin},\omega_{lin})}_{X_{y_0}}\leq 2C_2(\frac{1}{2})C_1y_0^{-1/2}.
\label{linearizedbound}
\end{equation}
We next define
\[
(p_{diff},\omega_{diff})=(p_1,\omega_1)-(p_0,\omega_0)-(\epsilon_1-\epsilon_0)(p_{lin}[\epsilon_0],\omega_{lin}[\epsilon_0]).
\]
We note that
\begin{align*}
(p_{diff},\omega_{diff})&=R\left(N(p_1,\omega_1)-N(p_0,\omega_0)-(\pdv{N}{\epsilon},\pdv{N}{p},\pdv{N}{\omega})(\epsilon_1-\epsilon_0,(\epsilon_1-\epsilon_0)p_{lin}[\epsilon_0],(\epsilon_1-\epsilon_0)\omega_{lin}[\epsilon_0])\right)\\
&=R(N(p_1,\omega_1)-N(p_0,\omega_0)-(\pdv{N}{\epsilon},\pdv{N}{p},\pdv{N}{\omega})|_{(\epsilon_0,p_0,\omega_0)}(\epsilon_1-\epsilon_0,p_1-p_0,\omega_1-\omega_0))\\
&+R(\pdv{N}{\epsilon},\pdv{N}{p},\pdv{N}{\omega})|_{(\epsilon_0,p_0,p_1)}(0,p_{diff},\omega_{diff})
\end{align*}
We use \eqref{derivativelipschitz1}, \eqref{derivativelipschitz2} and \eqref{lipschitzbound} to control the first term and absorb the second term on the left hand side to obtain
\[
\norm{(p_{diff},\omega_{diff})}_{N_{y_0}}\leq 10 (C_1C_2(\frac{1}{2}) y_0^{-1/2})^2\abs{\epsilon_1-\epsilon_0}^2.
\]
Thus, we see that $(p[\epsilon],\omega[\epsilon])$ is differentiable with derivative $(p_{lin}[\epsilon],\omega_{lin}[\epsilon])$.

To show continuity of $(p_{lin}[\epsilon],\omega_{lin}[\epsilon])$ with respect to $\epsilon$, we express
\begin{align*}
&(p_{lin}[\epsilon_1],\omega_{lin}[\epsilon_1])-(p_{lin}[\epsilon_0],\omega_{lin}[\epsilon_0])\\
&=R\left((\pdv{N}{\epsilon},\pdv{N}{p},\pdv{N}{\omega})_{(\epsilon_1,p_1,\omega_1)}\cdot (1,p_{lin}[\epsilon_1],\omega_{lin}[\epsilon_1])-(\pdv{N}{\epsilon},\pdv{N}{p},\pdv{N}{\omega})_{(\epsilon_0,p_0,\omega_0)}\cdot (1,p_{lin}[\epsilon_0],\omega_{lin}[\epsilon_0])\right)\\
&=R\left((\pdv{N}{\epsilon},\pdv{N}{p},\pdv{N}{\omega})_{(\epsilon_1,p_1,\omega_1)}\cdot (0,p_{lin}[\epsilon_1]-p_{lin}[\epsilon_0],\omega_{lin}[\epsilon_1]-\omega_{lin}[\epsilon_0])\right)\\
&+R\left(\left((\pdv{N}{\epsilon},\pdv{N}{p},\pdv{N}{\omega})_{(\epsilon_1,p_1,\omega_1)}-(\pdv{N}{\epsilon},\pdv{N}{p},\pdv{N}{\omega})_{(\epsilon_0,p_0,\omega_0)}\right)\cdot (1,p_{lin}[\epsilon_0],\omega_{lin}[\epsilon_0])\right)
\end{align*}
We bound the first term using the control of the derivative of $\pdv{N}{p},\pdv{N}{\omega}$ and bound the second term using \eqref{derivativelipschitz1}, \eqref{derivativelipschitz2}, \eqref{lipschitzbound} to obtain
\begin{multline*}
\norm{(p_{lin}[\epsilon_1],\omega_{lin}[\epsilon_1])-(p_{lin}[\epsilon_0],\omega_{lin}[\epsilon_0])}_{X_{y_0}}\leq \frac{1}{2}\norm{(p_{lin}[\epsilon_1],\omega_{lin}[\epsilon_1])-(p_{lin}[\epsilon_0],\omega_{lin}[\epsilon_0])}_{X_{y_0}}\\
+5(C_2(\frac{1}{2})C_1y_0^{-1/2})^2\abs{\epsilon_1-\epsilon_0}.
\end{multline*}
Absorbing the first term shows the desired continuity of the derivative, namely
\[
\norm{(p_{lin}[\epsilon_1],\omega_{lin}[\epsilon_1])-(p_{lin}[\epsilon_0],\omega_{lin}[\epsilon_0])}_{X_{y_0}}\leq 10 (C_1C_2(\frac{1}{2})y_0^{-1/2})^2\abs{\epsilon_1-\epsilon_0}.
\]
\end{proof}

\section{Deferred proofs from Section \ref{intsection}}
\label{intleftovers}
\subsection{Proof of Lemma \ref{kernelasymptotics}}
\label{kernelasymptoticsproof}
\begin{proof}
We let $v$ denote $v_1$ or $v_2$. For $y>0$ we then rewrite $Hv=0$ as $\tilde{H}v=f$, where
\[
f:=2(e^Q-\frac{1}{y^2})v
\]
We now choose the following basis of homogeneous solutions for $\tilde{H}$,
\[
\tilde{v}_1:=\frac{\cos(\fr \log(y))}{y^{1/2}},\qquad \tilde{v}_2:=\frac{\sin(\fr \log(y))}{y^{1/2}}.
\]
Their Wronskian is $\frac{\fr}{y^2}$. We introduce now the following integral operator so that $\tilde{H}\circ \tilde{S}_{\infty}=\id$:
\[
\tilde{S}_{\infty}(f):=\tilde{v}_1\int_{y}^{\infty} \frac{(y')^2}{\fr}\tilde{v}_2 fdy'-\tilde{v}_2\int_{y}^{\infty}\frac{(y')^2}{\fr}\tilde{v}_1 fdy'.
\]
We now recall that we have $f=O(y^{-3})$ as $y\to \infty$, so $\tilde{S}_{\infty}\circ \tilde{H}v=O(\frac{1}{y})$ as $y\to \infty$. We then note that
\[
v-\tilde{S}_{\infty}\circ \tilde{H}v\in \ker \tilde{H},
\]
so we have for some constants $a_1,a_2$
\begin{align*}
v&=a_1\tilde{v}_1+a_2\tilde{v}_2+\tilde{S}_{\infty}\circ \tilde{H}v\\
&=a_1\tilde{v}_1+a_2\tilde{v}_2+O(\frac{1}{y}).
\end{align*}
This, up to a linear change of variables, yields the claimed asymptotic expansions. To see why $c_3,c_4\neq 0$, we have by \eqref{kernelasymptoticseq}, $W=\frac{c_3c_4\fr \sin(d_1-d_2)}{y^2}$, and also $W=\frac{1}{y^2}$.
\end{proof}

\subsection{Proof of Lemma \ref{linlem}}
\label{linlemproof}
\begin{proof}
We first consider the operator $S_r$. We make the qualitative observation that if $f(x)$ is continuous then $S_rf$ is $C^2$ and $S_rf(0)=0$. We next assume that $f\in \calG_{y_1}$ and $\norm{f}_{\calG_{y_1}}\leq 1$. To show that $S_rf\in Z_{y_1}$ we argue differently for $0\leq y\leq 1$ and $1\leq y\leq y_1$. For $y\leq 1$, we note that as $y\to 0$, we have (uniformly in $r$),
\begin{equation}
v_{1,r}=O(y),\qquad v_{2,r}=O(\frac{1}{y})
\label{v0asymptots}
\end{equation}
and so we have (all inequalities independent of $y_0,\lambda_0,r$)
\[
\sup_{0\leq y\leq 1}\abs{Sf}\lesssim \sup_{0\leq y\leq 1}\abs{f}\lesssim \norm{f}_{\calG_{y_1}}.
\]
Next, for $y\geq 1$, we recall that the homogeneous solutions $v_{1,r},v_{2,r}$ have the following asymptotics as $y\to \infty$,
\begin{equation}
v_{1,r},v_{2,r}=O(\frac{1}{y^{1/2}}).
\label{vinftyasym}
\end{equation}
We then use the expression
\begin{align*}
S_rf(y)&=\frac{1}{r^3}\left(\left(\int_0^1fv_{2,r}(y')^2dy'\right)v_{1,r}-\left(\int_0^1 fv_{1,r} (y')^2 dy'\right)v_{2,r}\right)\\
&+\frac{1}{r^3}\left(\left(\int_1^yfv_{2,r}(y')^2dy'\right)v_{1,r}-\left(\int_1^y fv_{1,r} (y')^2 dy'\right)v_{2,r}\right)\\
&=S_1(y)+S_2(y).
\end{align*}
We then use the asymptotics of the fundamental solutions near zero and infinity to bound
\[
S_1(y)\leq \ev{y}^{-1/2}\norm{f}_{L^{\infty}[0,1]}\lesssim \ev{y}^{3/2}\norm{ f}_{\calG_{y_1}},
\]
and the asymptotics of the fundamental solutions near infinity to bound
\[
\qquad S_2(y)\lesssim \ev{y}^{3/2}\norm{\ev{y}^{1/2} f}_{L^{\infty}[1,y_1]}\lesssim \ev{y}^{3/2}\norm{ f}_{G_{y_1}},
\]
giving the desired pointwise control of $f$. To show control of the derivatives, we note that when we differentiate the definition of $S$ we get a cancellation, so
\begin{align*}
(Sf)'=\frac{1}{r^2}\left(\left(\int_0^y fv_{2,r} (y')^2 dy'\right)v_{1,r}'-\left(\int_0^y fu_2 (y')^2 dy'\right)v_2,r'\right).
\end{align*}
This allows one to control $(Sf)'$ using only pointwise control of $f$. Similarly,
\begin{align*}
(Sf)''&=-(HS)f-\frac{2}{y}\partial_y(Sf)-2e^Q(Sf)\\
&=-f-\frac{2}{y}\partial_y(Sf)-2e^Q(Sf).
\end{align*}
As the RHS can be controlled with pointwise control of $f$, we see that $\partial_r^2(Sf)$ can also be controlled with pointwise control of $f$. Using these observations, the control of the derivatives of $S$ works in the same way as the pointwise control. Thus, $S_r$ is a bounded as a map from $\calG_{y_1}$ to $Z_{y_1}$.

We then define the formal derivative of $S$ with respect to $r$ as
\begin{multline*}
S_{lin,r}f:=-\frac{3}{r}S_rf+\frac{1}{r^3}\left(\left(\int_0^y f \pdv{v_{2,r}}{r}\ (y')^2 dy'\right)v_{1,r}+\left(\int_0^y f v_{2,r}\ (y')^2 dy'\right)\pdv{v_{1,r}}{r}+\right.\\
\left.-\left(\int_0^y f \pdv{v_{1,r}}{r} (y')^2 dy'\right)v_{2,r}-\left(\int_0^y f v_{1,r} (y')^2 dy'\right)\pdv{v_{2,r}}{r}\ \right)
\end{multline*}
where we note that
\[
\pdv{v_{i,r}}{r}(y)=-\frac{y}{r^2}v_i'(\frac{y}{r}).
\]
and as we may differentiate the asymptotics of $v_i$, we see that \eqref{v0asymptots} and \eqref{vinftyasym} continue to hold for $\pdv{v_{i,r}}{r}$, so the same arguments showing the boundedness of $S$ show that this operator is bounded. Straightforward difference estimates prove that
\[
\norm{S_{r_1}-S_{r_0}-(r_1-r_0)S_{lin,r_0}}_{L(\calG_{y_1},Z_{y_1})}\lesssim \abs{r_1-r_0}^2
\]
and
\[
\norm{S_{lin,r_1}-S_{lin,r_0}}_{L(\calG_{y_1},Z_{y_1})}\lesssim \abs{r_1-r_0}
\]
which show that $S_{lin,r}$ is the derivative of $S_r$ and it is continuous with respect to $r$ as desired.

We next consider $T_r$. In this case, it is clear that if $f\in C^1$ and $f(0)=0$ then $T_rf\in C^2$ and $T_rf(0)=(T_rf)'(0)=0$. If one supposes $\norm{f}_{\calF_{y_1}}\leq 1$, then for $0\leq y\leq 1$, we use that $e^{-Q_r}$ goes to $r^2$ at zero to compute
\[
T_rf(y)=-\frac{1}{r^2 y^2e^Q}\int_0^y(y')^2 fdy'\lesssim \norm{f}_{L^{\infty}[0,1]}\lesssim \norm{f}_{\calF_{y_1}}.
\]
For $y\geq 1$, we use that $e^{-Q}=y^2+O(y^{3/2})$ as $y\to\infty$ to compute
\begin{align*}
T_rf(y)&=-\frac{1}{r^2 y^2e^Q}\int_0^1 (y')^2fdy'-\frac{1}{r^2 y^2e^Q}\int_1^y(y')^2 fdr'\\
&\lesssim \norm{f}_{L^{\infty}[0,1]}+\ev{y}^{5/2}\norm{\ev{y}^{1/2}f}_{L^{\infty}[1,y_1]}\lesssim \ev{y}^{5/2}\norm{f}_{\calF_{y_1}}.
\end{align*}
To control a derivative, we use the identity
\[
(T_rf)'=\left(e^Q(\frac{1}{e^Q})'-\frac{2}{y}\right)T_rf-\frac{1}{e^Q}f.
\]
and note that the right side involves no derivatives of $f$. This allows one to control $1$ derivative of $T_rf$ using pointwise control of $f$. Control of the second derivative of $T_rf$ requires pointwise control of the first derivative of $f$, which is supplied by the norm of $\calF_{y_1}$. From this one obtains that $T_r$ is bounded from $\calF_{y_1}$ to $Y_{y_1}$. One can then compute the formal derivative of $T_r$ with respect to $r$ to be
\[
T_{lin,r}f=-\frac{2}{r}T_rf+\frac{1}{r^2y^2e^{Q_r}}\pdv{Q_r}{r}\int_0^y f(y')(y')^2 dy'
\]
where one can compute
\[
\pdv{Q_r}{r}(y)=-\frac{y}{r^2}Q'(\frac{y}{r})-\frac{2}{r}.
\]
It is then straightforward to check that this operator is bounded, approximates $T_r$ to first order in $r$ and is Lipschitz with respect to $r$ valued in $L(\calF_{y_1},Y_{y_1})$.

Finally, to show that $K_r$ has the desired property, we note that it suffices to check that $L_r\colon Z_{y_1}\to \calF_{y_1}$ is bounded, and has the desired regularity when $r$ is varied. It is clear that, if $f$ is $C^1$, $f(0)=0$ then $Tf$ is $C^2$ and $Tf(0)=(Tf)'(0)=0$. Finally, if $f(0)=0$ and $f$ is $C^2$, then $L_rf$ is $C^1$. The boundedness follows from the asymptotics of $Q_r$. Finally, one can compute the formal derivative
 \[
L_{lin,r}w=\frac{2}{r}L_{lin,r}w+r^2\left((2+y\p_y)(e^{Q_r}\pdv{Q_r}{r}w)+\div(u_re^{Q_r}\pdv{Q_r}{r}w)+\div(\pdv{u_r}{r}e^{Q_r} w)\right).
\]
Once again, straightforward computations show that $L_{lin,r}$ is a bounded map from $Z_{y_1}$ to $\calF_{y_1}$, $L_{lin,r}$ approximates $L_r$ to first order and $L_{lin,r}$ is Lipschitz with respect to $r$. 
\end{proof}

\subsection{Proof of Lemma \ref{nonlinlemma}}
\label{nonlinlemmaproof}
\begin{proof}
We note that qualitatively if $w,u\in C^2([0,y_0])$ and $w(0)=u(0)=u'(0)=0$ then $G_{r}(w,u)\in C^1([0,y_1])$, $G_r(w,u)\in C^0([0,y_1])$ and $F_{r}(w,u)=0$. As a first step in showing boundedness of the linear operator, we define
\begin{align*}
F_{r,1}(w,u)&=\lambda_0^2r^2(2+y\partial_y)(w^2\int_0^1 (1-t)e^{Q_r+t\lambda_0^2 w}dt),\\
F_{r,2}(w,u)&:=\lambda_0^2r^2 \div(u w\int_0^1 e^{Q_r+t\lambda_0^2 w}dt),\\
F_{r,3}(w,u)&:=\lambda_0^2r^2 \div(u_r w^2\int_0^1 (1-t)e^{Q_r+t\lambda_0^2 w}dt),\\
G_{r,1}(w,u)&=r^2\div((y+u_r)\partial_y u_r),\\
G_{r,2}(w,u)&=2\lambda_0^2r^2 w^2\int_0^1(1-t)e^{Q_r+t\lambda_0^2 w}dt,\\
G_{r,3}(w,u)&=\lambda_0^2r^2\div \left(u\partial_y u_r+(y+u_r)\partial_y u+\lambda_0^2 u\partial_y u\right),
\end{align*}
so that we can express the nonlinearities as
\[
F_{r}=F_{r,1}+F_{r,2}+F_{r,3}\qquad \text{and}\qquad G_{r}=G_{r,1}+G_{r,2}+G_{r,3}.
\]
We now assume that $\norm{w}_{Z_{y_1}},\norm{u}_{Y_{y_1}}\lesssim 1$ and discuss bounding $F_{r,i},G_{r_i}$. As a first step, we note that as $y\leq \frac{y_0}{\lambda_0}$, and $\lambda\leq \frac{y_0}{10}$, we have $\ev{y}\lesssim \frac{y_0}{\lambda_0}$, or in other words
\begin{equation}
\lambda_0\lesssim y_0\ev{y}^{-1}.
\label{lambda0bound}
\end{equation}
Similarly, by \eqref{Qrefinedasymptot}, \eqref{ustarasymptot} we have
\begin{equation}
\abs{e^{Q_r},y\partial_y (e^{Q_r})}\lesssim \ev{y}^{-2},\qquad \abs{u_r}\lesssim \ev{y}^{1/2}
\label{finalbounds}
\end{equation}
These pointwise estimates allow us to conclude
\begin{align*}
\abs{F_1}&\lesssim y_0^2\ev{y}^{-1},\qquad \abs{F_2}\lesssim y_0^2\ev{y}^{-1},\qquad \abs{F_3}\lesssim y_0^2\ev{y}^{-3/2},\\
\abs{G_1}&\lesssim \ev{y}^{-1/2},\qquad \abs{G_2}\lesssim y_0^2\ev{y}^{-1},\qquad \abs{G_3}\lesssim y_0^2\ev{y}^{-1/2}.
\end{align*}
These bounds, along with \eqref{lambda0bound}, show that $\abs{F},\abs{G}\lesssim \ev{y}^{1/2}$ as desired. We can similarly see that $\abs{\partial_y F}\lesssim \ev{y}^{-1/2}$, finishing the proof of (a).

To show differentiability with respect to $r$, one formally computes
\begin{align*}
\pdv{F_{r,1}}{r}|_{r_0}&=\lambda_0^2 r_0(2+y\p_y)(w^2(2+r_0\pdv{Q_r}{r}|_{r_0})\int_0^1(1-t)e^{Q_{r_0}+t\lambda_0^2 w}dt,\\
\pdv{F_{r,2}}{r}|_{r_0}&=\lambda_0^2 r_0\div(u w (2+r_0\pdv{Q_r}{r}|_{r_0})\int_0^1e^{Q_r+t\lambda_0^2 w}),\\
\pdv{F_{r,3}}{r}|_{r_0}&=\lambda_0^2 r_0\div(((2+r_0\pdv{Q_r}{r}|_{r_0})u_{r_0}+r_0\pdv{u_r}{r}|_{r_0})w^2\int_0^t(1-t)e^{Q_r+t\lambda_0^2 w}dt),\\
\pdv{G_{r,1}}{r}|_{r_0}&=r_0\div(2(y+u_{r_0})\p_y u_{r_0}+r_0\pdv{u_{r}}{r}|_{r_0}\p_y u_{r_0}+r_0(y+u_{r_0})\p_y \pdv{u_r}{r}|_{r_0}),\\
\pdv{G_{r,2}}{r}|_{r_0}&=2\lambda_0^2 r_0w^2(2+r_0\pdv{Q_r}{r}|_{r_0})\int_0^1(1-t)e^{Q_r+t\lambda_0^2 w}dt,\\
\pdv{G_{r,3}}{r}|_{r_0}&=\lambda_0^2r\div(2(u \p_y u_{r_0}+(y+u_{r_0})\p_y u+\lambda_0^2 u\p_y u)+r_0u\p_y \pdv{u_r}{r}|_{r_0}+r_0\pdv{u_r}{r}|_{r_0}\p_y u_{r_0}).
\end{align*}
From these formulas it is then straightforward to deduce \eqref{derivativebounda}.

To compute the derivatives of $F_r,G_r$ with respect to $w$ and $u$, it is convenient to express
\begin{align*}
F_r(w,u)&=\frac{r^2}{\lambda_0^2}\left((2+y\p_y)(e^{Q_r+\lambda_0^2 w})+\div(e^{Q_r+\lambda_0^2 w}(u_r+\lambda_0^2 u))\right)-L_r w-J_r u,\\
G_r(w,u)&=r^2\div((y+u_r+\lambda_0^2 u)\p_y(u_r+\lambda_0^2 u))+\frac{r^2}{\lambda_0^2}\Delta(Q_r+\lambda_0^2 w)+\frac{r^2}{\lambda_0^2}2e^{Q_r+\lambda_0^2 w}+H_rw.
\end{align*}
from which one computes the formal derivatives
\begin{align*}
\pdv{F_r}{w}|_{w_0,u_0}w&=\lambda_0^2 r^2(2+y\p_y)(w_0(\int_0^1 e^{Q_r+t\lambda_0^2 w_0} dt) w)\\
&\qquad +\lambda_0^2 r^2\div\left((\int_0^1 e^{Q_r+t\lambda_0^2 w_0} dt) w_0u_r w+e^{Q_r+\lambda_0^2 w}u_0 w\right),\\
\pdv{F_r}{u}|_{w_0,u_0}u&=\lambda_0^2 r^2\div((\int_0^1 e^{Q_r+t\lambda_0^2 w} dt) w_0 u_0),\\
\pdv{G_r}{w}|_{w_0,u_0}w&=\lambda_0^2r^2 2(\int_0^1 e^{Q_r+t\lambda_0^2 w}dt) w_0 w,\\
\pdv{G_r}{u}|_{w_0,u_0}u&=\lambda_0^2 r^2 \div((y+u_r+\lambda_0^2 u_0)\p_y u+u\p_y(u_r+\lambda_0^2 u_0)).
\end{align*}
We note that each term contains a factor of $\lambda_0^2$, and if we use \eqref{lambda0bound} and \eqref{finalbounds} to estimate the rest of the terms, we see that if $\abs{w},\abs{w_0}\lesssim \ev{y}^{3/2}$, $\abs{u},\abs{u_0}\lesssim \ev{y}^{5/2}$,
\[
\abs{\pdv{F_r}{w}|_{w_0,u_0}w}\lesssim y_0^2 \ev{y}^{-1},\qquad \abs{\pdv{F_r}{u}|_{w_0,u_0}u}\lesssim y_0^2\ev{y}^{-1}
\]
\[
\abs{\pdv{G_r}{w}|_{w_0,u_0}w}\lesssim y_0^2 \ev{y}^{-1},\qquad \abs{\pdv{G_r}{u}|_{w_0,u_0}u}\lesssim y_0^2 \ev{y}^{-1/2}.
\]
which is the boundedness described in \eqref{derivativeboundb}.

With these computations done, straightforward difference estimates allow one to prove that the formal derivatives so far computed are indeed first order approximations, and that one has the Lipschitz continuity statements \eqref{Grlipschitz} and \eqref{Gfunctionlipschitz}.
\end{proof}

\subsection{Proof of Lemma \ref{intconstructionlemmapre}}
\label{intconstructionlemmapreproof}
\begin{proof}
This is a standard fixed point argument using the integral formulation \eqref{integralform}. The proof is structured similarly to the proof of Lemma \ref{extconstructionlemmapre}. We let $(u_0,w_0)=(0,0)$ and we shall inductively define, for $n\geq 1$,
\[
(W_n,U_n)=(G_r(w_{n-1},u_{n-1}),F_r(w_{n-1},u_{n-1})),\qquad (w_n,u_n)=(S_r(W_n),T_r(U_n),K_r(W_n))
\]
We claim that this sequence will converge in $Z_{y_1}\times Y_{y_1}$. Indeed, we see that that $\norm{w_1}_{Z_{y_1}},\norm{u_1}_{Y_{y_1}}\leq C_1 C_2(1)$. We let $\tilde{C}=2\sup(C_1 C_2(1),1)$. We then require that $y_0$ be sufficiently small so that $4 C_1C_2(\tilde{C})y_0^2\leq \frac{1}{10}$. Assuming that for $k\geq 1$,
\[
\norm{w_{k}}_{Z_{y_1}},\norm{w_{k-1}}_{Z_{y_1}},\norm{u_{k}}_{Y_{y_1}},\norm{u_{k-1}}_{Y_{y_1}}\leq \tilde{C},
\]
we obtain
\begin{align*}
\norm{w_{k+1}-w_{k}}_{Z_{y_1}}+\norm{u_{k+1}-u_{k}}_{Y_{y_1}}&\leq C_1\left(\norm{(W_{k+1}-W_{k}}_{G_{y_1}}+\norm{U_{k+1}-U_{k}}_{F_{y_1}}\right)\\
&\leq C_1 C_2(\tilde{C})y_0^2(\norm{w_k-w_{k-1}}_{Z_{y_1}}+\norm{u_{k}-u_{k-1}}_{Y_{y_1}})\\
&\leq \frac{1}{10}(\norm{w_{k}-w_{k-1}}_{Z_{y_1}}+\norm{u_{k}-u_{k-1}}_{Y_{y_1}}).
\end{align*}
where to get from the first line to the second line we use \eqref{derivativeboundb} and the fundamental theorem of calculus.
Thus, as $w_0=u_0=0$ and $\norm{w_1}_{Z_{y_1}},\norm{u_1}_{Y_{y_1}}\leq \frac{\tilde{C}}{2}$, we obtain inductively that
\[
\norm{u_{n}}_{Y_{y_1}},\norm{w_{n}}_{Z_{y_1}}\leq \tilde{C}(\frac{1}{2}+\frac{1}{10}+\dots+\frac{1}{10^{n}})\leq \tilde{C}.
\]
\[
\norm{w_n-w_{n-1}}_{Y_{y_1}}+\norm{u_n-u_{n-1}}_{Z_{y_1}}\leq \frac{1}{10^{n-1}}\left(\norm{u_{1}}_{Y_{y_1}}+\norm{w_1}_{Z_{y_1}}\right)
\]
Thus, if we let $C_3=100 \tilde{C}$ the sequence converges to a fixed point with norm $\norm{w}_{Z_{y_1}},\norm{u}_{Y_{y_1}}\leq \frac{C_3}{100}$. We note that if $w',u'$ is a fixed point with $\norm{w'}_{Z_{y_1}},\norm{u'}_{Y_{y_1}}\leq C_3$, then by the Lipschitz bounds we have
\[
\norm{w-w'}_{Z_{y_1}}+\norm{u-u'}_{Y_{y_0}}\leq C_2(C_3)C_1y_0^2(\norm{w-w'}_{Z_{y_1}}+\norm{u-u'}_{Y_{y_0}})
\]
and hence by imposing $y_0^2C_2(C_3)C_1\leq \frac{1}{2}$ we obtain the desired uniqueness concluding the proof of part (a).

To prove part (b), we consider $r_0,r_1\in (\frac{1}{2},2)$. We let
\[
(w_0,u_0)=(w[\lambda_0;r_0],u[\lambda_0;r_0]),\qquad (w_1,u_1)=(w[\lambda_0;r_1],u[\lambda_0;r_1]).
\]
As they are solutions to \eqref{integralform}, we have
\begin{multline*}
w_1-w_0=S_1\left(G_{r_1}(w_1,u_1)-G_{r_0}(w_1,u_1)\right)\\
+(S_1-S_0)(G_{r_0}(w_1,u_1))+S_0(G_{r_0}(w_1,u_1)-G_{r_0}(w_0,u_0)).
\end{multline*}
\end{proof}
Expanding $u_1-u_0$ similarly, and using the differentiability with respect to $r$, we obtain
\begin{align*}
\norm{w_1-w_0}_{Z_{y_1}}+\norm{u_1-u_0}_{Y_{y_1}}&\leq C_1 C_2(\tilde{C})\abs{r_0-r_1}+2C_1C_2(\tilde{C})y_0^2\left(\norm{w_1-w_0}_{Z_{y_1}}+\norm{u_1-u_0}_{Y_{y_1}}\right)\\
&\leq C_1 C_2(\tilde{C})\abs{r_0-r_1}+\frac{1}{2}\left(\norm{w_1-w_0}_{Z_{y_1}}+\norm{u_1-u_0}_{Y_{y_1}}\right)
\end{align*}
and hence by absorbing the second term into the left-hand side, we conclude
\begin{equation}
\norm{w_1-w_0}_{Z_{y_1}}+\norm{u_1-u_0}_{Y_{y_1}}\leq 2 C_1 C_2(\tilde{C})\abs{r_0-r_1}.
\label{differencebound}
\end{equation}
Next, for each $r\in (\frac{1}{2},2)$, we consider the linearization of \eqref{integralform}, namely
\begin{equation}
\begin{cases}
u_{lin}=\pdv{T_r}{r}F_r+T_r\pdv{F_r}{r}+\pdv{K_r}{r}G_r+K_r\pdv{G_r}{r}+\left(T_r(\pdv{F_r}{w},\pdv{F_r}{u})+K_r(\pdv{G_r}{w},\pdv{F_r}{u})\right)\cdot (w_{lin},u_{lin})\\
w_{lin}=\pdv{S_r}{r}G_r+S_r\pdv{G_r}{r}+S_r(\pdv{G_r}{w},\pdv{G_r}{u})\cdot (w_{lin},u_{lin})
\end{cases}
\label{linearizediteration}
\end{equation}
with $T,S,K,F,G$ and their derivatives evaluated at $(r,w[\lambda_0;r],u[\lambda_0;r])$. By the Lemmas \ref{linlem} and \ref{nonlinlemma},
\begin{align*}
\norm{\pdv{S_r}{r}G_r+S_r\pdv{G_r}{r}}_{Z_{y_1}}&\leq 5 C_1C_2(C_3),\\
\norm{S_r\pdv{G_r}{w}}_{Z_{y_1}\to Z_{y_1}}+\norm{S_r\pdv{G_r}{u}}_{Y_{y_1}\to Z_{y_1}}&\leq 5 y_0^2C_1C_2(C_3).
\end{align*}
There are similar estimates for the first equation. Thus, as we may assume $10 y_0^2C_1C_2(C_3)\leq \frac{1}{2}$, we may iteratively solve \eqref{linearizediteration} resulting in a solution $(w_{lin}[\lambda_0;r],u_{lin}[\lambda_0;r])$ satisfying
\[
\norm{w_{lin}}_{Z_{y_1}}+\norm{u_{lin}}_{Y_{y_1}}\leq 20 C_1C_2(C_3).
\]
We next define (recall $w_i,u_i=w[\lambda_0;r_i],u[\lambda_0;r_i]$)
\[
(w_{diff},u_{diff})=(w_1,u_1)-(w_0,u_0)-(r_1-r_0)(w_{lin}[\lambda_o;r_0],u_{lin}[\lambda_0;r_0]).
\]
We note that
\begin{align*}
w_{diff}&=S_{r_1}G_{r_1}(w_1,u_1)-S_{r_{0}}G_{r_0}(w_0,u_0)-(r_1-r_0)(\pdv{S_r}{r} G_r+S_r \pdv{G_r}{r})\\
&\qquad -S_{r_0}\pdv{G_{r_0}}{w}\ (w_1-w_0)-\pdv{G_{r_0}}{u}\ (u_1-u_0)+S_{r_0}\pdv{G_{r_0}}{w}w_{diff}+S_{r_0}\pdv{G_{r_0}}{u}u_{diff}\\
&=(S_{r_1}-S_{r_0})(G_{r_1}-G_{r_0})(w_1,u_1)+(S_{r_1}-S_{r_0})(G_{r_0}(w_1,u_1)-G_{r_0}(w_0,u_0))\\
&\qquad+\left(S_{r_1}-S_{r_0}-(r_1-r_0)\pdv{S_r}{r}|_{r_0}\right)G_{r_0}(w_0,u_0)\\
&\qquad +S_{r_0}\left((G_{r_1}-G_{r_0})(w_1,u_1)-(G_{r_1}-G_{r_0})(w_0,u_0)\right)\\
&\qquad +S_{r_0}(G_{r_1}-G_{r_0}-(r_1-r_0)\pdv{G_r}{r}|_{r_0})(w_0,u_0)\\
&\qquad +S_{r_0}\left(G_{r_0}(w_1,u_1)-G_{r_0}(w_0,u_0)-(\pdv{G_{r_0}}{w}\ (w_1-w_0)+\pdv{G_{r_0}}{u}\ (u_1-u_0))\right).\\
&\qquad +S_{r_0}(\pdv{G_{r_0}}{w},\pdv{G_{r_0}}{u})(w_{diff},u_{diff})
\end{align*}
We absorb the last term into the left-hand side, and use \eqref{linearderivativelipschitz}, \eqref{Grlipschitz}, \eqref{Gfunctionlipschitz} and \eqref{differencebound} to obtain
\[
\norm{w_{diff}}_{Z_{y_1}}\leq 100(C_1C_2(\tilde{C}))^2\abs{r_1-r_0}^2.
\]
One can estimate $u_{diff}$ similarly, and taking $r_1\to r_0$ we see that $(w[\lambda_0;r],u[\lambda_0;r])$ is differentiable with respect to $r$ at $r_0$ with derivative $(w_{lin}[\lambda_0;r_0],u_{lin}[\lambda_0;r_0])$. 

Finally, to show continuity of $(w_{lin}[\lambda_0;r],u_{lin}[\lambda_0;r])$ with respect to $r$, we introduce the notation $w_{lin,i}=w_{lin}[\lambda_0;r_i]$ and compute
\begin{align*}
&w_{lin,1}-w_{lin,0}\\
&=\pdv{S_{r}}{r}|_{r_1}G_{r_1}(w_1,u_1)+S_{r_1}\pdv{G_{r}}{r}|_{r_1}(w_1,u_1)-\pdv{S_r}{r}G_r|_{r_0}(w_0,u_0)-S_{r_0}\pdv{G_r}{r}|_{r_0}(w_0,u_0)\\
&+S_{r_1}(\pdv{G_{r_1}}{w},\pdv{G_{r_1}}{u})(w_{lin,1},u_{lin,1})-S_{r_0}(\pdv{G_{r_0}}{w},\pdv{G_{r_0}}{u})(w_{lin,0},u_{lin,0})\\
&=(\pdv{S_r}{r}|_{r_1}-\pdv{S_r}{r}|_{r_0})G_{r_1}(w_1,u_1)+\pdv{S_r}{r}|_{r_0}(G_{r_1}-G_{r_0})(w_1,u_1)+\pdv{S_r}{r}|_{r_0}(G_{r_0}(w_1,u_1)-G_{r_0}(w_0,u_0))\\
&+(S_{r_1}-S_{r_0})\pdv{G}{r}|_{r_1}(w_1,u_1)+S_{r_0}(\pdv{G_r}{r}|_{r_1}-\pdv{G_r}{r}|_{r_0})(w_1,u_1)+S_{r_0}\pdv{G_r}{r}|_{r_0}(w_1-w_0,u_1-u_0)\\
&+(S_{r_1}-S_{r_0})\pdv{G_{r_1}}{w}w_{lin,1}+S_{r_0}(\pdv{G_{r_1}}{w}-\pdv{G_{r_0}}{w})w_{lin,1}+S_{r_0}\pdv{G_{r_0}}{w}\ (w_{lin,1}-w_{lin,0})\\
&+(S_{r_1}-S_{r_0})\pdv{G_{r_1}}{u}u_{lin,1}+S_{r_0}(\pdv{G_{r_1}}{u}-\pdv{G_{r_0}}{u})u_{lin,1}+S_{r_0}\pdv{G_{r_0}}{u}\ (u_{lin,1}-u_{lin,0}).
\end{align*}
There is a similar decomposition for $u_{lin,1}-u_{lin,0}$ we may then absorb the terms containing $w_{lin,1}-w_{lin,0}$ and $u_{lin,1}-u_{lin,0}$ to the left side, and bound the rest using \eqref{linearderivativelipschitz}, \eqref{Grlipschitz}, \eqref{Gfunctionlipschitz} and \eqref{differencebound}, allowing one to show
\[
\norm{w_{lin,1}-w_{lin,0}}_{Z_{y_1}}+\norm{u_{lin,1}-0_{lin,0}}_{Z_{y_1}}\leq 1000C_1C_2(\tilde{C})\abs{r_1-r_0}
\]
and hence continuity.

\printbibliography
\end{document}